\documentclass[11pt,oneside]{amsart}
\usepackage{mdframed}
\usepackage{amsfonts,amscd,amssymb,amsmath,amsthm}
\usepackage{etex}
\usepackage{todonotes}
\usepackage{mathrsfs}
\usepackage{caption,graphicx,stmaryrd}
\usepackage[textheight= 615pt, textwidth=360pt]{geometry}
\usepackage{marginnote}
\usepackage{verbatim}
\usepackage{mathtools}
\usepackage{enumerate}
\usepackage{esint}

\usepackage{wasysym}
\usepackage{nccmath}

\usepackage[english]{babel}
\usepackage[utf8]{inputenc}

\usepackage{xparse}

\usepackage{wrapfig,subcaption}
\usepackage[colorlinks,citecolor=blue]{hyperref}

%\usepackage{setspace}
%\onehalfspacing

\usepackage[
    backend=bibtex,
    style=alphabetic,
    sortlocale=auto,
%    sortlocale=de_DE,
    natbib=true,
    url=false, 
    doi=true,
    eprint=false,
    safeinputenc=true,
    giveninits=true,
    isbn=false,
    maxbibnames=10
]{biblatex}
% Style: authoryear-icomp alphabetic
%\usepackage[style=alphabetic, maxbibnames=10]{biblatex}
%\addbibresource{Alonrefs.bib}
\addbibresource{ALON.bib}
\addbibresource{GAF_Range.bib}
%\addbibresource{CUE.bib}
%\addbibresource{dynamicalDE.bib}
%\addbibresource{history.bib}
%\addbibresource{lcf.bib}
%\addbibresource{LIL.bib}
%\addbibresource{maxgue.bib}
%\addbibresource{pOfNil.bib}
%\addbibresource{Substitution.bib}
%\addbibresource{surjectivity.bib}
%\addbibresource{EP4.bib}

\definecolor{Maroon}{rgb}{1.0,0.2,0.5}

\captionsetup{
  justification=raggedright,
  labelfont={color=Maroon,bf},
  font=footnotesize}

\def\polhk#1{\setbox0=\hbox{#1}{\ooalign{\hidewidth
    \lower1.5ex\hbox{`}\hidewidth\crcr\unhbox0}}} 

\makeatletter
\def\paragraph{\@startsection{paragraph}{4}%
	\z@\z@{-\fontdimen2\font}%
	{\normalfont\bfseries}}
\makeatother

\pagestyle{headings}

\newtheorem{theorem}{Theorem}[section]
\newtheorem{lemma}[theorem]{Lemma}

\newtheorem{proposition}[theorem]{Proposition}
\theoremstyle{definition}
\newtheorem{definition}[theorem]{Definition}

\newtheorem{question}[theorem]{Question}
\newtheorem{claim}[theorem]{Claim}
\newtheorem{remark}[theorem]{Remark}

%%%%%%%%%%%%%%%%%%%%%%%%%%%%%%%%%%%%%%%%%%
%%% General math macros
%%%%%%%%%%%%%%%%%%%%%%%%%%%%%%%%%%%%%%%%%%

\renewcommand{\Re}{\operatorname{Re}}

\newcommand{\sinc}{\operatorname{sinc}}

\newcommand{\R}{\mathbb{R}}
\newcommand{\C}{\mathbb{C}}
\newcommand{\D}{\mathbb{D}}
\newcommand{\A}{\mathcal{A}}
\newcommand{\cE}{\mathcal{E}}

\newcommand{\N}{\mathbb{N}} 

\newcommand{\Z}{\mathbb{Z}}
\newcommand{\T}{\mathbb{T}}

\newcommand{\frks}{\mathfrak{s}}
\newcommand{\regs}{s}
\newcommand{\LP}{\operatorname{L}}

\newcommand{\Matrices}[1]{ \mathbb{M}\left( {#1} \right) } 
\newcommand{\SAMatrices}[1]{ \mathbb{M}_{SA}\left( {#1} \right) } 
\DeclareDocumentCommand{\Matrices}{ O{r} O{n} }{ \mathbb{M}_{#2} \left( {#1} \right) }
\DeclareDocumentCommand{\SAMatrices}{ O{r} O{n} }{ \mathbb{M}_{#2,SA} \left( {#1} \right) }

\newcommand{\mart}{\mathcal{M}}
\newcommand{\homp}{\mathcal{H}}

%%%%%%%%%%%%%%%%%%%%%%%%%%%%%%%%%%%%%%%%%%
%%% Probability Macros
%%%%%%%%%%%%%%%%%%%%%%%%%%%%%%%%%%%%%%%%%%

\newcommand{\Exp}{\mathbb{E}}

\newcommand{\prob}{\mathbb{P}}
\newcommand{\filt}{\mathscr{F}}
\newcommand{\Gfilt}{\mathscr{G}}

\renewcommand{\Pr}{\prob}
\DeclareDocumentCommand \one { o }
{%
\IfNoValueTF {#1}
{\mathbf{1}  }
{\mathbf{1}\left\{ {#1} \right\} }%
}

\newcommand{\Var}{\operatorname{Var}}

\newcommand{\Unif}{\operatorname{Unif}}
\newcommand{\As}{\ensuremath{\operatorname{a.s.}}}
\newcommand{\lawequals}{\overset{\mathscr{L}}{=}}
\DeclareDocumentCommand{\Prto} {o} {
\IfNoValueTF {#1}
 {\overset{\Pr}{\longrightarrow}}
 { \xrightarrow[ #1 \to \infty]{\Pr }}
}
\DeclareDocumentCommand{\Asto} {o} {
\IfNoValueTF {#1}
 {\overset{\operatorname{a.s.}}{\longrightarrow}}
 {
 \xrightarrow[ #1 \to \infty]{\operatorname{a.s.} }
% \underset{#1 \to \infty}{\overset{\operatorname{a.s.}}{\longrightarrow}}
 }
}
\DeclareDocumentCommand{\Mgfto} {o} {
\IfNoValueTF {#1}
{\overset{\operatorname{mgf}}{\longrightarrow}}
{ \xrightarrow[ #1 \to \infty]{\operatorname{mgf} }}
}

\DeclareDocumentCommand{\Wkto} {o} {
\IfNoValueTF {#1}
 {\overset{(d)}{\longrightarrow}}
 { \xrightarrow[ #1 \to \infty]{(d) }}
}

\DeclareDocumentCommand{\LPto} { D<>{1} o} {
\IfNoValueTF {#2}
 {\overset{\operatorname{\LP^{#1}}}{\longrightarrow}
 }
 { \xrightarrow[ #2 \to \infty]{\LP^{#1} }}
}

%
%\DeclareDocumentCommand \LPto { O{1} }
%{\overset{\operatorname{\LP^{#1}}}{\longrightarrow}}

%%%%%%%%%%%%%%%%%%%%%%%%%%%%%%%%%%%%%%%%%%
%%% Alon's macros
%%%%%%%%%%%%%%%%%%%%%%%%%%%%%%%%%%%%%%%%%%

%\global\long\def\bbd{\mathbb{D}}
%\global\long\def\defeq{:=} %\coloneqq

\newcommand{\dd}{\mathrm{d}}

\newcommand{\intpart}[1]{{\left\lfloor#1\right\rfloor}}

%%%%%%%%%%%%%%%%%%%%%%%%%%%%%%%%%%%%%%%%%%
%%% Local Macros
%%%%%%%%%%%%%%%%%%%%%%%%%%%%%%%%%%%%%%%%%%

\begin{document}

\title[Image of random analytic functions]{The image of random analytic functions:
coverage of the complex plane via branching processes}
\author{Alon Nishry}
\address{School of Mathematical Sciences, Tel Aviv University}
\email{alonish@tauex.tau.ac.il}
\author{Elliot Paquette}
\address{Department of Mathematics, McGill University}
\email{elliot.paquette@mcgill.ca}
\thanks{
}
\date{\today}

\begin{abstract}
  We consider the range of random analytic functions with finite radius of convergence.
  We show that any \emph{unbounded} random Taylor series with rotationally invariant coefficients has dense image in the plane.
  We moreover show that if in addition the coefficients are complex Gaussian with sufficiently regular variances, then the image is the whole complex plane.
  We do this by exploiting an approximate connection between the coverage problem and spatial branching processes.
  This answers a long-standing open question of J.-P. Kahane, with sufficient regularity.
\end{abstract}

\maketitle

\section{Introduction}
%The basic problem of value-distribution theory of analytic functions is to study the set of solutions $\{z_n\}$ of the equation $f = a$ in a domain $G\subset \C$, where $f$ is analytic in $G$ and $a \in \C$.
%Many problems in complex analysis involve the study of the $b$-values of an analytic function $f$ in some domain $D$, i.e., the set of solutions $\{ z\in D ~\colon~ f(z) = b \}$. %, where, as $b$ varies over the complex numbers $\C$ or a subset of them. 
%For entire functions, value-distribution theory is governed by the growth of the Nevanlinna characteristic. When the Nevanlinna characteristic grows sufficiently fast, this theory is also applicable for functions in the unit disk (\cite[Chapter IX]{Nevanlinna1970}). However, for analytic functions in the unit disk with a \emph{bounded} Nevanlinna characteristic there appears to be less general theory (\cite{Lehto}).
%\todo[inline]{In the disk reference, where we have sufficient growth?}

%\todo[inline]{Why not start at $0$?}
In this work, we consider random Taylor series 
\begin{equation}
\label{eq:powerseries}
F(z) = \sum_{n \geq 1} \zeta_n a_n z^n,
\end{equation}
where $(\zeta_n, n \geq 1)$ is a sequence of independent, identically--distributed, rotationally invariant (i.e., $
\zeta_n \sim e^{i \theta} \zeta_n $ for any $\theta \in [0, 2\pi]$) complex random variables with $\Exp |\zeta_n|^2=1$ and $(a_n : n \geq 1)$ is a non-random nonnegative real-valued sequence (for convenience we set $a_0 = 0$).  We let $\rho_F > 0$ be the radius of convergence of $F$, which is almost surely given by $\liminf a_n^{-1/n}.$

The basic problem of value-distribution theory of analytic functions is to study the set of solutions $\{z_n\}$ of the equation $F = a$ for $a \in \C$.
An important special case is the problem of determining the image of $F$, in particular when $F$ is unbounded, when is $F(\D)=\C?$
%Our motivation is to determine when $F(\D)=\C$.  

Typically this has been approached using Jensen's formula, which allows one to prove that $F$ takes every $b \in \C$ infinitely often by proving the divergence of the logarithmic integral
\[
  \int_0^1 \log| F(re(\theta)) - b| \dd \theta
  \quad
  \text{where}
  \quad
  e(\theta) = e^{2\pi i \theta }
\]
as $r$ tends $\rho_F$.  The difficulty lies in proving that this holds simultaneously over an uncountable set, but this has been proven under the necessary condition that $\Exp |F(r)|^2 = \sum a_n^2 r^{2n} \to \infty$ as $r\to \rho_F$ (see \cite{nazarov2014log} and \cite{Kahane} for the complex Gaussian case).

Hence we can specialize to the case that: (a) the radius of convergence is finite, and, without loss of generality, equal to $1$, and (b)  $\sum a_n^2 < \infty.$  Under these assumptions, $F$ is an analytic function on the unit disk $\D$, in the Hardy class $H^2$ almost surely, which in particular means that its boundary values $F(e(\theta)) \coloneqq \lim_{r \to 1} F(re(\theta))$ exist for almost every $\theta \in [0,1]$.  The stochastic processes that can result on the boundary include many canonical stochastic processes, both continuous and discontinuous. This was the setting for much early theory on sample path properties of stochastic processes (see e.g. \cite[Sec. 16.3]{Kahane}).

A clear obstruction to the image $F(\D) = \C$ is that $F$ is bounded, which by a theorem of \cite{Billard}, is equivalent to the almost sure continuity of $F$ on the unit circle $\T$ (and hence also on $\overline{\D}$).
There is a wide range of unbounded random analytic functions with $\sum a_n^2 < \infty$,
and a priori this image could be any unbounded open connected set.
Our first theorem shows that the range of an unbounded $F$ is always dense.
\begin{theorem}
  For any $F$ with $\sum a_n^2 < \infty$, $\limsup a_n^{1/n} = 1$ if $F$ is unbounded on $\D$ almost surely then $F(\D)$ is dense.
  \label{thm:dense}
\end{theorem}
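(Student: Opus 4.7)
The plan is to use the rotational invariance of $(\zeta_n)$ to reduce to a random-phase model, then to prove via a Minkowski-sum argument that the distribution of $F(z)$ has full support in $\C$ for $|z|$ close to $1$, and finally to upgrade ``with positive probability'' to ``almost surely'' via a zero--one law.

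\emph{Reduction to random phases.} Conditioning on the moduli $(|\zeta_n|)$, the rotation invariance of each $\zeta_n$ gives that the conditional law of $F$ equals that of $\sum c_n e^{i\Theta_n} z^n$, where $c_n \defeq |\zeta_n| a_n$ is deterministic after conditioning and $(\Theta_n)$ are i.i.d.\ uniform on $[0,2\pi)$. It suffices to prove the claim in this setting. Since $|F(z)| \le \sum c_n |z|^n \le \sum c_n$, the hypothesis that $F$ is unbounded almost surely forces $\sum c_n = \infty$ almost surely.

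\emph{Full support for $|z|$ near $1$.} For $|z| = r < 1$, rotation invariance of the phases gives $F(z) \lawequals \sum (c_n r^n) e^{i\Theta_n}$. Split this as a partial sum $Z_N$ plus tail $R_N$: the tail has $L^2$ norm at most $(\sum_{n>N} c_n^2 r^{2n})^{1/2}$, small by the $H^2$ hypothesis, while the partial sum's support is the Minkowski sum of the circles $\{c_n r^n \cdot S^1\}_{n \le N}$. Since $c_n \to 0$ and $\sum c_n r^n \to \infty$ as $r \to 1$ (because $\sum c_n = \infty$), for $r$ close to $1$ and $N$ large no single radius exceeds half the total, and a triangle-inequality argument shows that the Minkowski sum equals the closed disk of radius $\sum_{n \le N} c_n r^n$. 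This disk eventually contains any $w \in \C$, and combined with the small tail one obtains $\Pr(|F(z) - w| < \epsilon) > 0$ for any $\epsilon > 0$, $w \in \C$ and $|z|$ sufficiently close to $1$. By Fubini on $\D$, $\E \, |\{z \in \D : |F(z)-w|<\epsilon\}| > 0$, so $\Pr(\exists z \in \D : |F(z)-w| < \epsilon) > 0$.

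\emph{Upgrade and conclusion.} To conclude that this probability equals $1$, one invokes a zero--one law: the event that $F(\D)$ is dense should be tail with respect to $(\zeta_n)$, because changing finitely many coordinates only adds a polynomial $Q$ to $F$, and density of the image ought to be preserved under such a perturbation. Granted tail triviality, Kolmogorov's zero--one law combined with the positive lower bound forces $\Pr(F(\D) \text{ is dense}) = 1$; intersecting over a countable dense family of pairs $(w,\epsilon)$ yields the theorem. The main obstacle is verifying the tail property. The natural route is to strengthen the support result to the assertion that the cluster set of $F$ at every boundary point $z^* \in \T$ equals $\C$ almost surely, a property manifestly invariant under adding a bounded analytic function to $F$; this should follow from a Paley--Zygmund second-moment estimate on an arc $\{r e^{i\theta} : |\theta| < \delta\}$ near $\partial \D$, for which sufficient decay of the covariance $\sum c_n^2 r^{2n} e^{in(\theta_1-\theta_2)}$ must be established for $\theta_1 \ne \theta_2$ as $r \to 1$.
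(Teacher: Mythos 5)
Your first two steps (reduction to random phases, full support of the law of $F(z)$ for fixed $z$ near the boundary, hence $\Pr(\exists z:|F(z)-w|<\epsilon)>0$ for each fixed $w,\epsilon$) are fine but easy; the entire content of the theorem is the upgrade to probability one, and that is where your argument has a genuine gap. The event $\{F(\D)\text{ is dense}\}$ is \emph{not} obviously a tail event: changing finitely many coefficients replaces $F$ by $F+Q$ with $Q$ a polynomial, and since the perturbation $Q(z)$ varies with $z$, density of $F(\D)$ does not transfer to $(F+Q)(\D)$ --- to hit $w$ with $F+Q$ you would need a boundary point $e^{i\theta_0}$ whose cluster set for $F$ contains the $\theta_0$-dependent target $w-Q(e^{i\theta_0})$, and density of $F(\D)$ alone gives no such matching. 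Your proposed repair, namely that the cluster set of $F$ at every (or even at one fixed) boundary point is all of $\C$, is indeed invariant under adding a polynomial, but it is a strictly \emph{stronger} localized version of the theorem; asserting that it ``should follow from a Paley--Zygmund second-moment estimate'' is not an argument, and note that the variance $\sum c_n^2 r^{2n}$ stays bounded as $r\to1$, so getting near a far-away $w$ at any single point is rare and one must genuinely exploit many almost-independent scales --- which is exactly the hard work of the paper. There is also a secondary logical slip: even granting tail-triviality of $\{F(\D)\text{ dense}\}$, you have only shown each event $A_{w,\epsilon}=\{\exists z:|F(z)-w|<\epsilon\}$ has positive probability, not their countable intersection; the zero--one law would have to be applied to each $A_{w,\epsilon}$ separately, and those events are precisely the ones whose tail property fails for the reason above.

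For contrast, the paper does not use any zero--one law. It decomposes $F$ into blocks $F_{n_{k-1},n_k}$, uses a Fej\'er-kernel estimate (Lemma \ref{lem:smallarc}) to show every short arc eventually carries large values, and then inductively selects random angles $\theta_k$ within shrinking arcs so that the block values $F_{n_{k-1},n_k}(\theta_k)=r_k\omega_k$ form a random walk $S_k$ with rotationally invariant steps, $r_k\to0$ and $\sum r_k^2=\infty$ (Lemma \ref{lem:ub-new}); such a walk is almost surely dense in $\C$ (Lemma \ref{lem:bmcoupling}, via Brownian embedding and neighborhood recurrence), and derivative and tail bounds show that $F$ evaluated along a nontangential path to the random boundary point $\theta_*$ tracks $S_k$ up to $O(2^{-k})$ errors, giving density of $F(\D)$ directly. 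If you want to salvage your outline, the tail/cluster-set step would have to be replaced by an argument of this multiscale type rather than invoked.
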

\begin{remark}
When $\sum a_n^2 = \infty$ it is known that almost surely $F$ has \emph{no} radial limit at a.e. boundary point $e(\theta)$ (e.g. \cite[Sec. 5.5]{Kahane}), and the result follows by a short argument (see \cite[p. 127]{Kahane68}). If $F$ is not a.s. dense, then, with positive probability, there is some $b \in \C$ such that $(F(z) - b)^{-1}$ is bounded and therefore has radial limits along a.e. radius, which leads to a contradiction.
\end{remark}

It was proven in great generality that when $\sum a_n^2 =\infty$, then $F(\D) = \C$. A natural leap is to ask if this is in fact true for any unbounded random analytic function.
%Indeed, as it was proven in great generality that when $\sum a_n^2 =\infty$ this holds, one may be led to believe if any other intermediate behavior is possible. 
In fact, Kahane states \cite[Section 10]{kahane1992some} (see also \cite[Question 1.4]{nazarov2014log}):
\begin{quote}
  There is also a remaining question, even in the case of Gaussian Taylor series.  Is it true that either such a series represents a.s.\ a continuous function on the closed disk $|z| \leq 1$, or that it maps a.s.\ onto the whole plane $\C$?
\end{quote}

We shall show that this insinuation of Kahane holds for Gaussian analytic functions (GAFs) given by a Taylor series with regular coefficient sequences. A GAF is the special case of \eqref{eq:powerseries} in which $\zeta_n$ are standard complex Gaussians (i.e.\ with density $\tfrac{1}{\pi}e^{-|z|^2}$ with respect to Lebesgue measure on $\C$).

%\todo[inline]{Comment - only interesting power is $\frac12$ - meaning of critical. among regularly varying - can specliaze to this case, second order regularity}

We consider square-summable regular sequences of nonnegative coefficients $\{ a_n \}_{n\ge1}$ of the form
\[
a_n = n^{-\alpha} \Omega(\log n), \qquad \alpha \ge \tfrac12,
\]
where $\Omega$ is a regularly-varying function; explicitly,
\begin{equation}\label{eq:an-coeffs-def}
a_n = n^{-\alpha} (\log(n+1))^{-\beta} \omega(\log (n+1)), \qquad \beta \in \R,
\end{equation}
where $\omega$ is a nonnegative, slowly-varying function at infinity, meaning
\[
\omega: (0,\infty) \to (0,\infty), \qquad \lim_{x\to\infty} \frac{\omega(a x)}{\omega(x)} = 1, \quad \text{for every } a > 0.
\]

Denote
\begin{equation}\label{eq:sj-def}
\regs_j^2 = \sum_{n=2^j}^{2^{j+1} - 1} a_n^2 \asymp 2^{j(1-2\alpha)} j^{-2\beta} \omega^2(j).
\end{equation}
The assumption $\sum_n a_n^2 = \sum_j \regs_j^2 < \infty$, necessitates $\alpha > \tfrac12$ or $\alpha = \tfrac12$ and $\beta \ge \frac12$.
For coefficients $(a_n)_n$ given by \eqref{eq:an-coeffs-def}, the proof of Theorem 1 in \cite[Chap. $7$]{Kahane} implies that $F$ is \emph{bounded} when
\[
\sum_{k=1}^\infty 2^{k/2} \bigg( \sum_{j=2^k}^{2^{k+1}-1} \regs_j^2 \bigg)^{1/2} < \infty,
\]
hence we require $\alpha = \frac12$, and $\beta \le 1$. In fact, together with Theorem 1 in \cite[Chap. $8$]{Kahane}, a 
%(e.g. \cite[Chap. $7$ and $8$]{Kahane})
necessary and sufficient condition for the GAF with coefficients \eqref{eq:an-coeffs-def} to be \emph{unbounded} on $\D$, is $\sum_j \regs_j = \infty$.

\begin{comment}
In particular, ,
\[
\sum_{k=1}^\infty 2^{k/2} \left( \sum_{j=2^k}^{2^{k+1}-1} s_j^2 \right)^{1/2} < \infty, \qquad \text{where} \quad s_j^2 = \sum_{n=2^j}^{2^{j+1} - 1} a_n^2. 
\]
Since $s_j \asymp 2^{j(1/2-\alpha)} j^{-\beta} \omega(j)$, we require that $\alpha = \frac12$, and $\beta \le 1$. The assumption $\sum a_n^2 < \infty$ (equivalently $\sum s_j^2 < \infty$) then necessitates $\beta \ge \frac12$. %In the case $\beta = 1$, using also Theorem 1 in \cite[Chap. $8$]{Kahane}, we have that $F$ is bounded if and only if the series $\sum_{k=1}^\infty \omega(2^k)$ converges.
\end{comment}

We then define:
\begin{definition}\label{def:regular}
  We say that a GAF is critical-regular
  if $\left\{ a_n \right\}$
  is given by
  \[
    a_n = n^{-1/2}(\log (n+1))^{-\beta} \omega( \log (n+1)),
    \quad n \geq 1,
  \]
  where $\beta \in [\tfrac12,1]$ and $\omega$ is a nonnegative slowly varying function.
\end{definition}

We prove the following statement, which thus answers the question of \cite{kahane1992some} under sufficient regularity.
\begin{theorem}\label{thm:coverage}
  Any unbounded, critical-regular GAF $F$ satisfies $F(\D) = \C$ almost surely.
\end{theorem}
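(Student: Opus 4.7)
The plan is to prove Theorem~\ref{thm:coverage} via the argument principle combined with an approximate branching random walk (BRW) structure. Since $F(\bbd)$ is open, connected, and, by Theorem~\ref{thm:dense}, dense in $\bbc$, it suffices to show that with probability one, every compact $K \subset \bbc$ is eventually enclosed by some curve $F|_{\partial \bbd_{r}}$ with positive winding about each $b \in K$ — this forces $K \subset F(\bbd_{r}) \subset F(\bbd)$, and exhausting $\bbc$ by a countable sequence of compact sets yields the theorem. The winding analysis is carried out via an approximation of $F$ by a complex-valued BRW on a tree of dyadic scales.

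Setting $r_j = 1 - 2^{-j}$ and $\theta_{j,k} = k/2^j$, define samples $V_{j,k} \defeq F(r_j e(\theta_{j,k}))$ for $k = 0, \ldots, 2^j - 1$. Writing $F = \sum_i P_i$ with $P_i$ supported on frequencies $n \in [2^i, 2^{i+1})$, a covariance computation shows that at radius $r_j$, the blocks $P_i$ with $i \leq j$ are approximately constant on arcs of length $2^{-i}$ and decorrelated between such arcs, while the blocks with $i > j$ contribute only a bounded-variance residual. One obtains
\[
V_{j,k} \approx \sum_{i=0}^j U_{i,\lfloor k 2^{i-j}\rfloor},
\]
where the $U_{i,m}$ are approximately independent centered complex Gaussians of variance $\asymp \regs_i^2$; thus $\{V_{j,k}\}$ is a complex binary BRW with step variance $\regs_i^2$ at generation $i$.

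The coverage/winding statement for the BRW splits into two regimes. For $b$ in a fixed bounded region, a second-moment calculation — with covariances of the indicators $\mathbf{1}\{V_{j,k} \in B(b,\varepsilon)\}$ controlled by the overlap structure of the tree — gives that the expected number of $(j,k)$ with $V_{j,k} \in B(b, \varepsilon)$ is of order $2^j \varepsilon^2 / S_j^2$; this diverges as $j \to \infty$, since $S_j^2 = \sum_{i \leq j} \regs_i^2$ is bounded by the assumption $\sum a_n^2 < \infty$. A refinement tracking the phase $\arg(V_{j,k} - b)$ across the tree shows that the polygonal curve $V_{j,0} \to V_{j,1} \to \cdots \to V_{j,2^j - 1} \to V_{j,0}$ winds about $b$ at some large $j$. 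For $b$ on the outer part of $K$ or with $|b|$ very large, one uses the extremes of the BRW: the hypothesis $\sum \regs_j = \infty$ of unboundedness, together with the rotational invariance of the law, ensures that $\max_k |V_{j,k}|$ grows to infinity in every direction, so that the polygonal curve eventually encloses any prescribed ball. Passing from winding of the polygonal curve to winding of the smooth curve $F|_{\partial \bbd_{r_j}}$ is then a standard modulus-of-continuity estimate for $F$ at scale $2^{-j}$.

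The principal obstacle is uniformity: making the winding statement hold simultaneously for all $b \in K$ (and hence, after exhaustion, for all $b \in \bbc$) with probability one. This requires a union bound over a dyadic $\varepsilon$-net of $K$ that survives the cover count $2^j \varepsilon^2 / S_j^2$, uniform control of the BRW envelope across directions in $\bbc$, and coordination of the interior second-moment regime with the extremal regime. The case where $\regs_j$ decays at the maximum rate compatible with $\sum \regs_j = \infty$ is the most delicate, as the extremal spread of the BRW is then minimal; resolving this borderline case requires the full strength of the branching-process machinery and the critical-regularity assumption — including the slowly-varying factor $\omega$ — to produce sharp concentration inequalities along the branches of the tree.
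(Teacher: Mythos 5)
Your skeleton (dyadic sampling, approximate branching random walk, second-moment count of near-hits) matches the paper's \emph{heuristic}, but the mechanism you propose for concluding $b\in F(\D)$ has a genuine gap. Knowing that many samples $V_{j,k}$ lie in $B(b,\varepsilon)$ does not produce nonzero winding of the curve $\theta\mapsto F(r_je(\theta))$ about $b$: the winding number is a sum of \emph{signed} phase increments, typical contributions cancel, and a second-moment (or first-moment) count of unsigned near-hits says nothing about an integer-valued signed quantity. Your fallback for the ``outer'' regime is also not sound: $\max_k|V_{j,k}|\to\infty$ ``in every direction'' does not imply the curve encloses any prescribed ball, since a curve can reach far out and retrace its path with zero winding. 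Note that the argument-principle/Jensen route is exactly the classical approach that works when $\sum a_n^2=\infty$ (so that $\log|F-b|$ has divergent means); in the regime $\sum a_n^2<\infty$ treated here, $F-b$ is a nonzero $H^2$ function and nothing forces it to wind, which is why the paper abandons winding altogether and instead converts a single quantitative near-hit into coverage of a small disk via Fuchs' lemma (Lemma \ref{lem:cover}), which requires a \emph{lower} bound on $|F_c'|$ at the near-hit point together with upper bounds on $\sup|F'|$ (Lemma \ref{lem:GAFtail}, Proposition \ref{prop:cover}); no control of the derivative appears anywhere in your plan.

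The second structural gap is the passage from positive probability to probability one, uniformly in $b$. A Paley--Zygmund argument on the idealized BRW gives only a probability bounded below; $\{F(\D)=\C\}$ is not an evident tail event (changing finitely many coefficients perturbs $F$ by a polynomial, which need not preserve coverage), so there is no $0$--$1$ law to invoke, and you do not supply one. The paper's resolution is quantitative: the greedy homing supermartingale (Proposition \ref{prop:homing}) drives a walker to within $\frks_n^{1-\gamma}$ of an \emph{adapted} target with failure probability $\exp(-c\min\{L_{m,n}^2/V_{m,n},n^{\gamma/4}\})$ --- here the unboundedness hypothesis $\sum\frks_k=\infty$ enters as the drift $L_{m,n}$, not merely as an extremal spread as in your sketch --- and this is iterated over scales to build a net of targets (Proposition \ref{prop:homingnet}), then combined with the spawning/second-moment step and Fuchs' lemma to get $\Pr(F(\D)\supset\D(u,1))\geq 1-\delta$ by a union bound. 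Finally, the BRW approximation itself (approximate constancy on arcs, decorrelation between arcs, control of the H\"older constants $\Delta_{j,k}$, the choice of a large block parameter $L$) is asserted in your proposal but is precisely where much of the paper's technical work lies (Lemmas \ref{lem:homingdecoupling} and \ref{lem:windowbound}); as written, the proposal does not close either of these gaps, and the winding step in particular would fail.
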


%\todo[inline]{It will be convenient to require $\beta > \frac12$ to avoid requirements on $\omega$ in this case (which are not important anyway)}

%\todo[inline]{Keep the remark below?)}

\begin{remark}
In Theorem~\ref{thm:coverage} we work under the assumptions
\begin{equation*}
  \beta \in [\tfrac12, 1],
  \quad 
  \sum_{m=1}^\infty m^{-2\beta}  \omega^2(m) < \infty
  \quad\text{and}\quad
  \sum_{m=1}^\infty m^{-\beta}  \omega(m) = \infty.
\end{equation*}
We emphasize that when $\beta < 1$ there is in fact \emph{no} restriction on the function $\omega$. If $\omega$ is chosen such that the left series above diverges, then the result of Theorem \ref{thm:coverage} follows (in a stronger form) from \cite[Theorem 1.3]{nazarov2014log}.
\end{remark}

%For this class it is possible (e.g. \cite[Chap. $7$ and $8$]{Kahane}) to give an elementary characterization of a necessary and sufficient condition for which the GAF is unbounded on $\D$ but has finite radial boundary values on $\T$ almost surely:
%\begin{equation}\label{eq:l2nol1}
%  \sum_{k=0}^\infty k^{-2\beta}  \omega^2(k) < \infty
%  \quad\text{and}\quad
%  \sum_{k=0}^\infty k^{-\beta}  \omega(k) = \infty.
%\end{equation}
%Note this motivates the restriction $\beta \in [\tfrac 12, 1]$.

\subsection{Proof strategy}

We give a proof that relies primarily on statistical properties of the random analytic function, rather than analytical ones, and exploits an inherent branching process structure built into $F$.
To describe this, we begin by grouping the terms of $F$ into blocks of exponentially growing order.
We let $L \in \N$ and define for any $k \in \N$
\begin{equation}\label{eq:Xblock}
  X_k(\theta,r) 
  =X_{k,L}(\theta,r) \coloneqq \sum_{n=2^{L(k-1)}}^{2^{Lk}-1}
  \zeta_n a_n r^n e(n \theta)
  \quad
  \text{and}\quad
  \frks_k^2 \coloneqq \sum_{n=2^{L(k-1)}}^{2^{Lk}-1} a_n^2.
\end{equation}
We will also let $X_k(\theta) = X_k(\theta,1)$.
We refer to these random polynomials as the blocks of $F$, and we use the notation for all $0 \leq m < n \leq \infty$
\begin{equation}\label{eq:Fblock}
  F_{m,n} \coloneqq \sum_{k=m+1}^n X_k.
\end{equation}
We shall still write $F_{0,\infty}(\theta, r)$ for $F(re(\theta)).$ \\

\paragraph{Branching process heuristic.}
As families over $k$, these polynomials $\{X_k\}$ are independent.  More critically, as one varies $\theta \in \R/\Z$, these polynomials $X_k(\theta,1)$ roughly behave as though the arcs $[j2^{-L(k-1)},(j+1)2^{-L(k-1)}]$ are independent over $j$ and moreover locally constant.  This remains true for $X_k(\theta,r)$ provided $r \geq 1-2^{-Lk}.$  If one adopts this heuristic, the process $F_{0,n}(\theta)$ becomes an encoding of a \emph{branching random walk}.

%\todo[inline]{Change $j$ to $n$ and $p$ to $j$ below}
This branching random walk $\mathcal{Z}_{n,j}$ is a doubly indexed collection of processes, where for each fixed `point in time' $n$ the process $(\mathcal{Z}_{n,j} : n)$ is a complex random walk with rotationally invariant increments of variance $\frks_n^2$.  In the index $j$, the collection grows as $2^{L(n-1)}$, and each particle splits into $2^L$ new, independent and identically distributed random walks at each step.
%\todo[inline]{Unboundedness assumption of GAF leads to exceptional random walkers}

The assumption $\sum_k \frks_k^2 < \infty$ means that any \emph{fixed} random walk remains bounded, while the unboundedness assumption of the GAF leads \emph{exceptional} random walkers grow unbounded. To show the density of this image, we exploit that if the process is unbounded, we can find, over a sufficiently sparse sequence $n_k$, a specially chosen walker $j_k$ so that $(\mathcal{Z}_{n_k,j_k} : k)$ is a random walk with rotationally invariant increments that satisfy 
\begin{equation}\label{eq:densewalker}
  \sum_k \Exp |\mathcal{Z}_{n_{k+1},j_{k+1}}-\mathcal{Z}_{n_k,j_k}|^2 =\infty
  \quad
  \text{and}
  \quad
  \Exp |\mathcal{Z}_{n_{k+1},j_{k+1}}-\mathcal{Z}_{n_k,j_k}| \Asto[k] 0.
\end{equation}
Such a random walk has dense image in the plane (for a more precise formulation, see Lemma \ref{lem:bmcoupling}). Translating this result back to the GAF means there is a \emph{non-tangential} path $\Gamma$ to a (random) boundary point on the circle $\T$, so that the image $F(\Gamma)$ is dense in $\C$.\\

\paragraph{Coverage heuristic.}
As for covering the plane, using analytic function theory, we can show that the image of $F$ can roughly be considered as a fattened version of the branching process where around each walker $\mathcal{Z}_{n,j}$ we put a disk of radius about $n^{-1-2\beta}$ (c.f.\ Proposition~\ref{prop:cover}). 
 On a technical level, we also need to show sufficient continuity in the GAF to be able to compare to the branching heuristic.  See for example Lemma \ref{lem:windowbound}.

One might hope that by fattening the single dense walker $\{ \mathcal{Z}_{n_k,j_k} : k\}$ referenced in 
\eqref{eq:densewalker} by disks of radius $n_k^{-1-2\beta},$ this image covers the plane.  However this union of disks always has finite volume.
%However by selecting the sequence $\frks_n$ to diverge sufficiently slowly, one can make $n_k$ diverge arbitrarly quickly with $k$.  Hence this set can have finite volume for some unbounded GAFs.

Furthermore, the question of whether or not there exists a single dense walker that covers the plane has the following analogous analytical question:
\begin{question}\label{q:sector}
  For which critical-regular GAFs does there exist a Stolz sector 
  \[
	  T(\theta, M) \coloneqq \{e^{i\theta}z : z \in \D, |1-z| \leq M(1-|z|)\}
  \]
  for some $M >1$ such that $F(T(\theta,M)) = \C$?
\end{question}
\noindent In particular, it is not clear that this should have the same threshold as the coverage question of when $F(\D) = \C$.

So, returning to the heuristic for coverage, we need to exploit the existence of the full branching process available.  Now it suffices to show that in fact, for any $u \in \C,$ $F(\D)$ contains $\D(u,1)$ with probability $1$.
To handle all unbounded branching processes, we need to find many random walkers that approach $u \in \C$ and we need to effectively estimate the rate at which we approach $u$.
%Moreover, we need to \emph{effectively} estimate the rate at which we can find random walkers tending towards $u \in \C.$

In the case $\beta < 1$ this problem is relatively easy.  Since we have $s_n \asymp n^{-\beta}\omega(n)$, the variance contained in the random walk $(\mathcal{Z}_{m,j}-\mathcal{Z}_{n,j} : m \geq n)$ is on the order of $n^{-2\beta + 1}$, up to slowly varying factors.  We will have on the order of $2^n$ many $j$ so that $|\mathcal{Z}_{n,j}| \leq C$, as this is typical behavior for a walker with bounded variance; call these typical $j$, $\mathcal{G}_n$.
For any of these $j \in  \mathcal{G}_n$, we can then ask how many travel to a small neighborhood of $u$ (say, closer than $n^{-1000}$) by time $n^2$.  The probability that this occurs for $1$ walker is $e^{-\tilde{\omega}(n) n^{2\beta-1}}$ for some slowly varying function $\tilde{\omega}$.  

However since $\beta < 1$ and we have exponentially many walkers in $\mathcal{G}_n$, we have many (in fact exponentially many in $n$) walkers close to $u$ by time $n^2$.  As we fatten these by $\bigl(n^{2}\bigr)^{-2\beta-1}$ disks, $u$ is contained in the image of the fattened random walk with probability that can be sent to $1$ by sending $n$ to infinity. As stated, this is enough to contain the point $u$.  The coverage argument requires covering $\D(u,1)$ with probability going to $1$.  But in this case, while the ball around $u$ that we cover has shrinking radius, the probability with which this occurs is $1-e^{-cn}$, locally uniformly in $u$ and so by a union bound we can cover $\D(u,1)$.

This argument can be turned into a coverage argument for the GAF, but we do not show this formally (the proof we give in  Section \ref{sec:coverage} is instead based on tools which 
 are also useful for the case $\beta=1$).
It furthermore exposes $\beta=1$ as qualitatively different in that the variance contained in the tail  $(\mathcal{Z}_{m,j}-\mathcal{Z}_{n,j} : m \geq n)$ is not always enough to overwhelm lack of progress in approaching the point $u$ by time $n$.

So in the case that $\beta=1$, we break the walk into three time intervals, the ``homing'', ``spawning'' and ``coverage'' intervals.  Using the notation of Section \ref{sec:coverage}, we refer to these time intervals as $[0,k_h)$, $[k_h,k_s)$ and $[k_s,k_s^2)$ respectively.  These are related by $k_s = k_h + O(\log k_h),$ and $k_h$ will be a paramter which we then take to $\infty.$

  In the homing interval, we greedily thin the branching random walk, by selecting walkers that tend to go towards $u$.  In fact, by adjusting the targets of this swarm of random walkers, we select a random collection of walkers which form a $(k_h)^{-4}$ net of $\D(u,1)$ with probability tending to $1$ as $k_h \to \infty$.  The basic ``homing'' strategy is implemented in Section~\ref{sec:homing} for the GAF, while the construction of the net is done in Proposition \ref{prop:homingnet}.
  %(which is to say the whole branching random walk succeeds in forming a net with probability tending to $1$ with $k_h$).   

  In the spawning interval, we then let these random walkers, which were successful in homing towards a net of $\D(u,1)$, spawn more walkers.  As the amount of time ellapsed between $k_s$ and $k_h$ is short, one typically would expect these walkers to only move by about $\eta \coloneqq k_h^{-1}\omega(k_h)\sqrt{\log k_h}$.  This is much larger than $k_h^{-4},$ and so after spawning, we can only guarantee that we have a $\eta$--net of $\D(u,1)$.  However, we have, for each point $x \in \D(u,1)$ almost $2^{k_s-k_h}=k_h^{C}$ many walkers that are within distance $\eta$ of this $x$.  Moreover this constant $C>0$ can be as large as we like by adjusting the precise definition of $k_s$.

  Finally, in the coverage interval, we then use these walkers to cover all of $\D(u,1).$  Each increment $(\mathcal{Z}_{k_s^2,j}-\mathcal{Z}_{k_s,j})$ has standard deviation on order $k_h^{-1/2}{\omega}(k_h) \geq \eta k_h^{1/4}$, and we have $k_h^{C}$ many walkers at distance $\eta$ from each point in $\D(u,1)$.  Hence for $C$ sufficiently large, and any desired point $x \in \D(u,1)$ we can find a walker within distance $k_s^{-\Lambda}$ (for any desired fixed $\Lambda>0$) of $x$ with probability $1-k_s^{-C'}$, where $C'$ can be made as large as desired by increasing $C.$
  Thus, by a union bound (conditionally on the outcome of the homing interval), we cover the whole disk $\D(u,1).$

  \begin{remark}
    The heuristic above applies equally to the GAF coverage problem as to a coverage problem just about spatial branching processes.   Our method shows that some class of fattened branching process images covers the plane.  
    Below we have formulated a general question about fattened branching processes, which could be viewed as the probabilistic essence of the random analytic function coverage question.  %But it is not clear if the method above covers \emph{all} unbounded branching processes, and so we formulate the following open question. 
  \end{remark}

  \begin{question}
    Suppose $\{\mathcal{Z}_{n,j}\}$ is a spatial binary branching process, where $n$ denotes time and $1 \leq j \leq 2^{n-1}$.  Suppose the increments of the walks are rotationally invariant and have nonincreasing variances $\mathfrak{s}_n^2$.  Let $\mathfrak{r}_n$ be a nonincreasing sequence, and let $\mathcal{F}$ be the union of balls around $\mathcal{Z}_{n,j}$ of radius $\mathfrak{r}_n$ over all $n$ and $j$.  Under what conditions is $\mathcal{F}=\R^2?$
  \end{question}
%
%we can simply choose a large $n$.
%In all we break the GAF $F$ into $3$ parts, 
%\[
%  F = F_{0,k_h} + F_{k_h,k_s} + F_{k_s,\infty}.
%\]
%\todo[inline]{Below: exponent of the net is $\beta$ dependent not $4$, split the argument to simpler $\beta < 1$ and more difficult $\beta = 1$?}
%In the first part $F_{0,k_h}$ of the ``homing'' section, we construct an entire random net $\{\Theta_q:q\}$ of angles which produce a $(k_h)^{-4}$-net of a desired compact set of $\C.$  These are constructed by greedily subsampling random walkers that tend to make progress towards the desired targets.  The basic ``homing'' strategy is implemented in Section~\ref{sec:homing}.  The final construction of the net is given in in Proposition \ref{prop:homingnet}.  %We note that for $\beta < 1$, which are less ``critical'' some of these   %, in which section we then in which we assemble all parts of the argument and prove Theorem \ref{thm:coverage}.
%
%In a second part, the ``spawning'' section $F_{k_h,k_s}$, where $k_s - k_h \asymp 
%\log k_h$, we then have each successfully homed walker spawn a polynomially large (in $k_h$) family of candidates which are equally good as the first walker.  Then, in the final part $F_{k_s,\infty}$, we try with each of these spawned walkers to cover a ball of radius about $k_h^{-1 - 2\beta}$, and moreover these events are nearly independent.  As we have many independent spawned walkers, this holds with high probability.  These two parts together are formalized in Proposition~\ref{prop:cover}.

\subsection{Discussion}

We have shown that unbounded random analytic functions with rotationally invariant coefficients have dense image, and further, if they are unbounded regular GAFs then they have image covering the whole plane.
It is a natural question if these theorems extend to the case of Rademacher analytic functions (whose coefficients have independent signs).
\begin{question}
  For an unbounded Rademacher analytic function $F :\D \to \C$ with $\sum a_n^2 <\infty$, is $F(\D)=\C$?
\end{question}

%\todo[inline]{reshuffle paragraph - NNS14, then rest}
Gaussian and rotationally invariant coefficients are simpler to handle. Under the assumptions of finite radius of convergence and $\sum a_n^2 = \infty$, the coverage question for general symmetric distributions, in particular the Rademacher case, was settled in \cite{nazarov2014log}. Earlier, Offord \cite{offord1972distribution} gave a positive answer for certain non-lattice distributions of the random variables $\{\xi_n\}$ (e.g. standard complex normal, see also \cite[Chap. 13]{Kahane}). For discrete distributions there were also previous results, under additional assumptions on the sequence $\left\{ a_n \right\}$ (e.g.\ \cite{jakob1983distribution, murai1981value}).

%\todo[inline]{check: arbitrarily slowly growing slowly varying functions}
We have formulated Theorem \ref{thm:coverage} for a class of regularly varying coefficient sequences.  This in particular includes arbitrarily slowly diverging GAFs (i.e.\ those for which $\sup_\theta |F_{0,m}(e(\theta))|$ diverges arbitrarily slowly with $m$.  There is a component of our strategy which is fundamentally quantitative, in that we need to show the homing process works sufficiently quickly that we can still cover. Thus in many places in the argument (in fact in all parts), we rely on computations which are made possible by the asymptotic form of $\frks_k \asymp_L k^{-\beta} \omega(k)$.  We also rely on the finer structure of the variance within blocks in one part (in the proof of Lemma \ref{lem:hittingu}); we note that the branching heuristic we have given \emph{requires} that within blocks $[2^{L(k-1)},2^{Lk}]$ the coefficient sequence $\{a_n\}$ is well distributed.

The question of characterizing the image of analytic functions in the unit disk has been raised in the context of non-random Taylor series, specifically (Hadamard) lacunary power series
\[
f(z) = \sum_{k=1}^\infty a_k z^{n_k}, \qquad n_k \in \N, \: a_k \in \C \quad \text{and} \quad n_{k+1} / n_k \ge q > 1,
\]
where we assume the radius of convergence is equal to $1$. 
It has generally been true that lacunary power series and random power series have parallel theories, with the lacunary theory generally established far earlier and often more simply. For example, lacunary series are unbounded if and only if $\sum_k |a_k| = \infty$ (e.g. \cite[Theorem 3.6.6]{Grafakos143rdEd}), compared with the complicated condition found by Dudley, Fernique, and Marcus and Pisier (e.g. \cite[Chap. 15]{Kahane}).

Paley \cite{paley1933lacunary} raised the same question as Kahane's in the context of lacunary series. For such functions, Murai \cite{murai1981value} proved that unboundedness implies the image is the entire complex plane (this was proved earlier by G. Weiss and M. Weiss \cite{Weiss62} for $q$ sufficiently large).

% \todo[inline]{JPR consider a very special lacunary series, should probably mention it.}
Lacunary functions $f$ have additionally been the subject of other statistical studies, such as that of \cite{Jensen2014} (with $a_k = 1, n_k = 2^k$), which shows in randomly selecting a point uniformly $\omega \in \T$ uniformly at random, the path $\tau \mapsto f( (1-2^{-n\tau})\omega)$ has a scaling limit to a complex Brownian motion on sending $n \to \infty$.  Thus in a sense, the typical boundary point of a lacunary series, when approached radially, have dense image in the plane.  In contrast, unbounded GAFs with $\sum a_n^2 < \infty$ have boundary values which are a.e. finite.  We nonetheless show in the proof of Theorem \ref{thm:dense} that there exist exceptional boundary points which when approached from the interior (nontangentially, albeit not radially), the GAF has dense image. We leave as an open question (cf. \cite[Chap. 13]{Kahane}, when $\sum_n a_n^2 = \infty$):
\begin{question}
  For an unbounded GAF $F :\D \to \C$ with $\sum a_n^2 <\infty$, does there almost surely exist $\omega \in \T$ so that the image $\{ F(r\omega) \colon r \in [0,1] \}$ is dense in $\C$?
\end{question}
%\todo[inline]{Does this follows from the above result - non-tangential path -> radial path?}

%\todo[inline]{cf. Kahane's results, Section 4, p. 184}

\subsection{Organization}

In Section \ref{sec:density} we give a self-contained proof of the density Theorem \ref{thm:dense}.
In Section \ref{sec:prelim} we develop some preliminary lemmas that we use in the coverage argument. 
In Section \ref{sec:homing} we develop the homing mechanism.
In Section \ref{sec:branching} we show how to cover a desired ball using the approximate branching structure of the GAF.
In Section \ref{sec:coverage} we complete the proof of Theorem \ref{thm:coverage}.
Finally in Appendix~\ref{app:rw} we include a general lemma about the density of random walks on $\C$ of shrinking rotationally invariant steps.

\subsection{Acknowledgment}
We thank Fedor Nazarov for hinting at the relation between GAFs and branching processes.
The authors thank the US-Israel Binational science foundation, grant number 2018341, which supported this work.  
The first author acknowledges support from ISF grant 1903/18.
The second author acknowledges support from NSERC Discovery grant RGPIN-2020-04974.

\section{Density}\label{sec:density} 

%\todo[inline]{Notation here is not consistent with the rest - here we don't need the block structure}

In this section we prove Theorem \ref{thm:dense}.  Hence throughout, we suppose that $F\in H^2(\D)$ is a random Taylor series with radius of convergence $1$, which is unbounded almost surely. We begin with some simple observations about the random series. 

\begin{lemma}
  Suppose that $F$ is unbounded almost surely.  Then for any $n,N \in \N$ and any $\theta \in \R,$
  \[
    \sup_{|\psi - \theta| \leq \tfrac{1}{2N}} | F_{n,m}(\psi) | \Asto[m] \infty.
  \]
  \label{lem:smallarc}
\end{lemma}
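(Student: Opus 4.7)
The plan is to combine rotational invariance with a zero--one law argument and a Kahane--Itô--Nisio dichotomy in the Banach space $C(I)$. Set $I \coloneqq \{\psi : |\psi - \theta| \leq 1/(2N)\}$. My first goal is to show that, almost surely, the partial sums $F_{0,m}|_I$ fail to converge in $C(I)$. The event of uniform convergence on $I$ is a tail event of the sequence $(\zeta_n)$: altering finitely many coefficients perturbs every $F_{0,m}|_I$ (for $m$ past the altered indices) by a single fixed bounded polynomial, which does not affect the existence of a uniform limit. So by Kolmogorov's $0$--$1$ law, this event has probability $0$ or $1$.

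If it were $1$, the rotational invariance $(\zeta_n e(n s))_n \lawequals (\zeta_n)_n$ (for every fixed $s$, since each $\zeta_n$ is individually rotationally invariant and they are independent) would transfer the almost-sure uniform convergence of $F_{0,m}$ to each of the $N$ rotated arcs $I + k/N$, $k = 0, \ldots, N-1$. These arcs cover $\T$, and the uniform limits on overlapping arcs must agree pointwise (they coincide with the common pointwise limit of $F_{0,m}$ on a.e.\ point), so they glue into a uniform limit of $F_{0,m}$ on $\T$. By Billard's theorem this would force $F$ to be continuous on $\overline{\D}$, hence bounded --- contradicting the standing hypothesis. Therefore $F_{0,m}|_I$ almost surely fails to converge in $C(I)$.

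The second step is to invoke the Itô--Nisio theorem, in Kahane's formulation for independent symmetric (in particular, rotationally invariant) summands in a separable Banach space, applied to the series $\sum_k \zeta_k a_k e(k\cdot)|_I$ in $C(I)$. The dichotomy asserts that the partial sums either converge in $C(I)$ a.s., or their $C(I)$-norms are unbounded in $m$ a.s. The first alternative is ruled out, so $\sup_{\psi \in I}|F_{0,m}(\psi)|$ is unbounded in $m$ almost surely. Finally, since $F_{0,n}$ is a fixed polynomial, bounded on $I$, the same conclusion transfers at once to $F_{n,m} = F_{0,m} - F_{0,n}$.

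The delicate points, and where I expect the bulk of the care to lie, are (i) verifying that uniform-convergence on $I$ is genuinely a tail event, and (ii) the gluing of uniform limits across the $N$ overlapping rotated arcs. Once these are in hand, the Banach-space Itô--Nisio dichotomy yields the divergence in one line. Upgrading the conclusion from ``$\sup_m = \infty$ a.s.'' to the literal $\liminf_m = \infty$ a.s.\ reading of the statement would require an additional monotonicity or concentration input not built into the present sketch; however, for the density construction that follows this lemma, limsup-divergence along subsequences is what is actually needed.
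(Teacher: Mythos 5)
Your step 1 (tail event, zero--one law, rotation to cover $\T$, maximum principle/Billard) is fine, but the pivot of your argument --- the ``dichotomy'' in step 2 --- is not a theorem you can cite. The It\^o--Nisio theorem says that for independent symmetric summands in a separable Banach space, a.s.\ convergence, convergence in probability and convergence in law of the partial sums are equivalent; it does \emph{not} say that failure of a.s.\ convergence forces the partial-sum norms to be a.s.\ unbounded. That stronger statement (``bounded implies convergent'') is false in general Banach spaces containing $c_0$ --- take $S_m=\sum_{k\le m}\epsilon_k e_k$ in $c_0$: the norms are identically $1$ yet the series diverges --- and by the Hoffmann-J{\o}rgensen--Kwapie\'n theorem it holds exactly in spaces \emph{not} containing $c_0$, which excludes $C(I)$. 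For random Fourier series the boundedness/uniform-convergence equivalence is Billard's theorem, but that is a structural result proved on the \emph{whole circle}; its localization to a sub-arc is essentially the content of the lemma you are trying to prove. The paper's Fej\'er-kernel estimate (Claim \ref{clm:Fejer_prop}) is precisely the elementary device that supplies this localization: subsequential boundedness of the partial sums on an arc controls \emph{all} Fej\'er means on a slightly smaller arc, rotation invariance and a union bound upgrade this to the whole circle, and boundedness of the Fej\'er means bounds $F_{n,\infty}$ itself, contradicting unboundedness. So as written, your central step assumes the crux.

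Two further points. First, even granting your dichotomy, you would only obtain $\sup_m\sup_{I}|F_{n,m}|=\infty$ a.s., i.e.\ limsup-divergence, and you acknowledge this; but the lemma as stated, and its use, require the full limit: in Lemma \ref{lem:ub-new} the index $n_k$ must \emph{simultaneously} satisfy property (iii) and a constraint that is only available along a subsequence (the $\liminf$ coming from Lemma \ref{lem:min}), so one needs $\Pr[\sup_{|\psi-\theta|\le 1/(2N)}|F_{n,m}(\psi)|\le q]\to 0$ as $m\to\infty$, which is exactly the liminf version the paper proves (note how its proof lets $m'\to\infty$ along the subsequence realizing the liminf inside the Fej\'er bound). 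Your claim that limsup-divergence suffices downstream is therefore not justified without reworking that construction. Second, the limsup version you do reach needs none of the machinery you invoke: the event $\{\sup_m\sup_I|F_{0,m}|<\infty\}$ is itself a tail event, so if it had probability one, rotation invariance, a union bound over finitely many arcs and the maximum principle would bound all partial sums on $\overline{\D}$ and hence bound $F$, a contradiction. In short, the genuinely hard part of the lemma is the passage from subsequential control of partial sums on an arc to control of $F$, and that is the gap your proposal leaves open.
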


\begin{remark}
  The block structure of $F_{n,m}$ that was introduced in \eqref{eq:Fblock} is not important here. In this lemma, one could just as well use $F_{n,m}$ as the Gaussian polynomial $\sum_{j=n+1}^m \zeta_j a_j z^j$, and moreover, throughout this section one could use this alternative definition.   
\end{remark}

To prove Lemma \ref{lem:smallarc}, we will rely on the following estimate:
\begin{claim}\label{clm:Fejer_prop}
Let $F = \sum_n a_n z^n$ be an analytic function in $H^2(\D)$, and let $P_m$ be the corresponding algebraic Fej\'{e}r polynomial $P_m = F \star K_m = \sum_n (1-n/m)_+ z^n$ of degree $m$, where $K_m$ is the analytic Fej\'{e}r kernel. Then, if $I \subset [-\pi,\pi]$ is any interval, and $I/2$ is the interval with same center as $I$, and length $|I|/2$, then for all $m$ sufficiently large (depending only on $|I|$)
\[
\sup_{t \in I/2} |P_m(e^{it})| \le \sup_{t\in I} |F(e^{it})| + \frac{C \|F\|_2}{m|I|},
\]
where $C > 0$ is a numerical constant.
\end{claim}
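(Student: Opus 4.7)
The crucial identification is that on the unit circle, the analytic Fej\'er polynomial $P_m$ coincides with the (two-sided) trigonometric Fej\'er mean of $F$. Since $F \in H^2(\mathbb{D})$ has Fourier coefficients $\widehat F(n) = a_n$ for $n \ge 0$ and $\widehat F(n) = 0$ for $n < 0$, the symmetric Ces\`aro average collapses to the one-sided algebraic polynomial:
\[
\sigma_m F(e^{it}) = \sum_{|n|<m}\bigl(1-\tfrac{|n|}{m}\bigr)\widehat F(n) e^{int} = \sum_{n=0}^{m-1}\bigl(1-\tfrac{n}{m}\bigr) a_n e^{int} = P_m(e^{it}),
\]
so that $P_m(e^{it}) = (F * \Phi_m)(e^{it})$, where $\Phi_m(s) = \sin^2(ms/2)/(m\sin^2(s/2))$ is the \emph{positive} trigonometric Fej\'er kernel, satisfying $\int_{-\pi}^{\pi}\Phi_m(s)\,\tfrac{ds}{2\pi} = 1$ and the pointwise decay $\Phi_m(s) \le \pi^2/(m s^2)$ on $0 < |s| \le \pi$. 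This is the essential point; it converts a question about the analytic Fej\'er convolution into a standard tool from Fourier analysis on the circle.

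With this in hand, I would set $\delta \coloneqq |I|/4$ and split the convolution into a local piece over $|s|\le\delta$ and a tail over $\delta < |s| \le \pi$. For $t \in I/2$ and $|s|\le\delta$ the point $t - s$ lies in the full interval $I$, so positivity of $\Phi_m$ together with its unit mass immediately yields
\[
  \left|\int_{|s|\le\delta} F(e^{i(t-s)})\,\Phi_m(s)\,\tfrac{ds}{2\pi}\right| \le \sup_{t'\in I}|F(e^{it'})|.
\]
For the tail, the decay $\Phi_m(s) \le C/(m s^2)$ can be paired with H\"older's inequality against $F$. For instance, the $L^\infty$--$L^1$ pairing, using $\sup_{|s|>\delta}\Phi_m \le C/(m\delta^2)$ and $\|F\|_{L^1(\T)} \le \|F\|_2$, produces an error of order $C\|F\|_2/(m|I|^2)$, while an $L^2$--$L^2$ Cauchy--Schwarz against $\Phi_m\mathbf{1}_{|s|>\delta}$ gives $C\|F\|_2/(m|I|^{3/2})$. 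The sharp exponent $\gamma = 1$ in $|I|$ stated in the claim follows from a slightly more careful argument, combining the sharp tail mass $\int_{|s|>\delta}\Phi_m \le C/(m|I|)$ with a localization of $F$ to dyadic annuli around $t$ in which $\|F\|_{L^1}$ on each annulus is controlled via Cauchy--Schwarz by $\|F\|_2$ and the annulus length.

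The local estimate is essentially free from the probability-measure property of $\Phi_m$; the main technical point is the tail estimate and in particular pinning down the optimal $|I|$-dependence. For the downstream application in Lemma~\ref{lem:smallarc} — where the point is only that the error tends to zero as $m \to \infty$ at fixed arc length — any of the H\"older pairings above suffices, which is why I regard this as the main flexibility of the argument rather than its main obstacle.
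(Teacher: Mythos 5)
Your argument is correct and is essentially the paper's proof: identify $P_m$ on the circle with the nonnegative two-sided Fej\'er mean of $F$ (the paper uses this implicitly when it invokes nonnegativity of $K_m$), bound the local piece by $\sup_I|F|$ via positivity and unit mass, and bound the tail by the kernel decay paired against $\|F\|_2$. The only caveat is your parenthetical claim that a dyadic-annuli refinement yields the error $C\|F\|_2/(m|I|)$ --- that computation in fact stalls at order $\|F\|_2/(m|I|^{3/2})$ --- but this is immaterial since the claim is only used at fixed $|I|$ with $m\to\infty$, and the paper's own step $\sup_{(I/4)^c}K_m\lesssim (m|I|)^{-1}$ is imprecise in the same harmless way (the pointwise bound is really of order $(m|I|^2)^{-1}$).
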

\begin{proof}
We may assume that $M = \sup_I |F| < \infty$ and that $I$ is centered at $0$. Then, for $t \in I/2$
\[
|P_m(e^{it})| = \frac{1}{2\pi} \left| \int_{-\pi}^{\pi} K_m(s) F(e^{i(t-s)}) \dd s \right|
\le \frac{M}{2\pi} \int_{I/4} K_m + \sup_{(I/4)^c} K_m \cdot \| F \|_2,
\]
since the Fej\'{e}r kernel is nonnegative. It remains to notice that
$\frac{1}{2\pi} \int_{I/4} K_m \le 1$ and $\sup_{(I/4)^c} K_m \lesssim \frac{1}{m|I|}$.
\end{proof}

\begin{proof}[Proof of Lemma \ref{lem:smallarc}]
  As it is a polynomial $F_{0,n}$ is almost surely bounded on $\D.$  Hence by assumption and the maximum principle 
  \[
  \sup_{\psi \in [0,1]} |F_{n,\infty}(\psi)| = \infty.
  \]

  Notice it is enough to prove the result for rational $\theta$ and any $n, N$. Moreover, by rotation invariance it is sufficient to prove that
  \[
  	\sup_{|\psi| \leq 1/(2N)} |F_{n,m}(\psi)| \Asto[m] \infty.
  \]
  Assume this is false, then using the fact it is a tail event, we see that for some $C$ sufficiently large, the event
  \[
  	\liminf_{m \to \infty} \sup_{|\psi| \leq 1/(2N)} |F_{n,m}(\psi)| \le C.
  \]  
  has probability at least $1 - 1/(8N)$. We may also assume that on the same event $\|F\|_2 \le C$ (and in particular $\|F_{n,m}\|_2 \le C$ uniformly in $n,m$).
  
  Now define the Fej\'{e}r polynomials $G_{n,m}$ with respect to $F_{n,\infty}$, and notice they are also the Fej\'{e}r polynomials w.r.t. sufficiently long truncation $F_{n,m^\prime}$ of $F_{n,\infty}$. Therefore, by Claim \ref{clm:Fejer_prop}, for all $m$ sufficiently large (depending on $N, C$) we have
  \[
  \sup_{|\psi| \leq 1/(4N)} |G_{n,m}(\psi)| \le 2 \sup_{|\psi| \leq 1/(2N)} |F_{n,m^\prime}(\psi)|, \qquad \forall m^\prime \ge m.
  \]
  Hence, taking $m^\prime\to\infty$ over an appropriate subsequence, we find that with probability at least $1 - 1/(6N)$,
  \[
    	\sup_m \sup_{|\psi| \leq 1/(4N)} |G_{n,m}(\psi)| \le 2 C.
  \]  
  By rotation invariance and the union bound we conclude that with probability at least $1/2$ it holds
  \[
    	\sup_m \sup_{\psi \in [0,1]} |G_{n,m}(\psi)| \le 2 C.
  \]
  But on this event $F_{n,\infty}$ is bounded by $2C$ which leads to a contradiction.

  %Define the identically distributed random variables $\left\{ Y_{j,n,m} : j \in \left\{1,2,\dots,N \right\} \right\}$ by
  %\[
  %Y_{j,n,m} \coloneqq \sup_{|\psi| \leq \tfrac{1}{2N}} |F_{n,m}(\psi+\theta+\tfrac{j}{N})|.
  %\]
  %Then, by the union bound
  %\(
  %\Pr[Y_{j,n,\infty} = \infty ] \ge \frac1N,
  %\)
  %for any $j \in \{1,\dots,N\}$, and since this is a tail event $\Pr[Y_{j,n,\infty} = \infty] = 1$.

  %\todo[inline]{Not sure how to continue. Why does it follow that $Y_{j,n,m} \to Y_{j,n,\infty}$ as $m\to\infty$?}

\end{proof}

We also observe that the minimum modulus of $F_{n,m}$ can be made small on any interval, simply by virtue of the coefficients being square summable.
\begin{lemma}
  For any $n \in \N$ and any $\theta \in \R$ and all $t > 0,$
  \[
    \liminf_{m \to \infty} \Pr[ |F_{n,m}(\theta)| > t ]
    \leq
    \Pr[ |F_{n,\infty}(\theta)| > t].
  \]
  Hence for any $t > 0,$
  \[
    \lim_{n \to \infty}
    \liminf_{m \to \infty} \Pr[ |F_{n,m}(\theta)| > t ]
    =0.
  \]
  \label{lem:min}
\end{lemma}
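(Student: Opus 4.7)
The plan is to exploit the independence of the middle sum $F_{n,m}(\theta)$ from the tail $F_{m,\infty}(\theta)$, combined with the fact that the tail variance $\sum_{k>m} \frks_k^2$ vanishes. Both $F_{n,m}(\theta) = \sum_{k=n+1}^m X_k(\theta)$ and $F_{m,\infty}(\theta) = \sum_{k>m} X_k(\theta)$ are sums over disjoint families of the independent blocks $\{X_k\}$, so as random variables they are independent. Moreover $\E|F_{m,\infty}(\theta)|^2 = \sum_{k>m} \frks_k^2 \to 0$ as $m \to \infty$, so by Chebyshev $\Pr[|F_{m,\infty}(\theta)| \leq \varepsilon] \to 1$ for every $\varepsilon > 0$, and by the same reasoning $F_{n,\infty}(\theta) \to 0$ in probability as $n \to \infty$.

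For the main inequality I would fix $\varepsilon > 0$ and use the decomposition $F_{n,\infty}(\theta) = F_{n,m}(\theta) + F_{m,\infty}(\theta)$ with the triangle inequality: on the event $\{|F_{n,m}(\theta)| > t\} \cap \{|F_{m,\infty}(\theta)| \leq \varepsilon\}$ one has $|F_{n,\infty}(\theta)| > t - \varepsilon$. Independence then yields
\[
  \Pr[|F_{n,m}(\theta)| > t] \cdot \Pr[|F_{m,\infty}(\theta)| \leq \varepsilon]
  = \Pr[|F_{n,m}(\theta)| > t,\; |F_{m,\infty}(\theta)| \leq \varepsilon]
  \leq \Pr[|F_{n,\infty}(\theta)| > t - \varepsilon].
\]
Taking $\liminf_m$ and using that the second factor on the left tends to $1$ gives
\[
  \liminf_{m \to \infty} \Pr[|F_{n,m}(\theta)| > t] \leq \Pr[|F_{n,\infty}(\theta)| > t - \varepsilon]
\]
for every $\varepsilon > 0$. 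Sending $\varepsilon \to 0$ through values such that $t - \varepsilon$ is a continuity point of the distribution function of $|F_{n,\infty}(\theta)|$ (all but countably many such values are available) delivers the stated first bound.

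The second assertion is then immediate: since $F_{n,\infty}(\theta) \to 0$ in probability as $n \to \infty$, we have $\Pr[|F_{n,\infty}(\theta)| > t] \to 0$ for each fixed $t > 0$, so the first bound forces $\lim_{n\to\infty}\liminf_{m\to\infty}\Pr[|F_{n,m}(\theta)|>t] = 0$. I do not expect any serious obstacle here; the only mildly delicate step is the passage $\varepsilon \to 0$ (converting $t - \varepsilon$ on the right to $t$), which is handled by the observation that the distribution of $|F_{n,\infty}(\theta)|$ has at most countably many atoms.
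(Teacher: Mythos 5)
Your argument is sound in substance but takes a genuinely different route from the paper. The paper disposes of the lemma in one line: $F_{n,m}(\theta)$ converges (in $L^2$, a.s., hence weakly) to $F_{n,\infty}(\theta)$ as $m\to\infty$, so the first bound is read off from weak convergence (portmanteau), and the second follows from the weak convergence $F_{n,\infty}(\theta)\to 0$ as $n\to\infty$. You instead rebuild the mechanism underneath: independence of $F_{n,m}(\theta)$ from the tail $F_{m,\infty}(\theta)$ (disjoint blocks of the $\zeta_j$), Chebyshev for the tail variance $\sum_{k>m}\frks_k^2\to 0$, and the triangle inequality, giving $\Pr[|F_{n,m}(\theta)|>t]\,\Pr[|F_{m,\infty}(\theta)|\le\varepsilon]\le\Pr[|F_{n,\infty}(\theta)|>t-\varepsilon]$. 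This is a perfectly valid, more self-contained derivation of the same convergence-in-distribution fact, and the deduction of the second assertion from the first is the same in both treatments.

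One justification in your last step is off, though it is harmless. As $\varepsilon\downarrow 0$ the events $\{|F_{n,\infty}(\theta)|>t-\varepsilon\}$ decrease to $\{|F_{n,\infty}(\theta)|\ge t\}$, so the limit of the right-hand side is $\Pr[|F_{n,\infty}(\theta)|\ge t]$ no matter which sequence of $\varepsilon$ you use; choosing $t-\varepsilon$ to be continuity points of the law of $|F_{n,\infty}(\theta)|$ does not convert the $\ge t$ into $>t$ --- for that you would need the absence of an atom exactly at $t$. So what your argument literally proves is $\liminf_m\Pr[|F_{n,m}(\theta)|>t]\le\Pr[|F_{n,\infty}(\theta)|\ge t]$. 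This is the same bound the closed-set form of portmanteau yields in the paper's one-line proof, it still gives the second assertion (Chebyshev: $\Pr[|F_{n,\infty}(\theta)|\ge t]\le t^{-2}\sum_{k>n}\frks_k^2\to 0$), and it suffices where the lemma is applied in Lemma~\ref{lem:ub-new}. If you want the statement verbatim, you can add that under the standing assumptions every tail contains infinitely many nonzero terms, and by rotational invariance the phases are i.i.d.\ uniform given the moduli, whence $|F_{n,\infty}(\theta)|$ has no atoms on $(0,\infty)$; with that observation either proof yields the inequality exactly as stated.
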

\begin{remark}
By the contraction principle (see e.g. \cite[Sec. 4.2]{LedouxTalagrand}) it holds that $\Pr[ |F_{n,m}(\theta)| > t ] \leq 2 \Pr[ |F_{n,\infty}(\theta)| > t]$ for every $m \ge n$.
\end{remark}
\begin{proof}
  The first statement is the weak convergence $F_{n,m}(\theta) \Wkto[m] F_{n,\infty}(\theta),$ and the second is the weak convergence $F_{n,\infty}(\theta) \Wkto[n] 0.$
\end{proof}

Finally we observe that $F$ enjoys many distributional symmetries.
\begin{lemma}
  For any $m,n \in \N$ with $m > n$ (or with $m =\infty$) we have the identity in law
  \[
    (F_{n,m}(\theta,r) : re(\theta) \in \overline{\D}) \lawequals (\Omega F_{n,m}(\theta,r) : re(\theta) \in \overline{\D}),
  \]
  where $\Omega$ is a uniformly distributed element of $\T$ independent of $F_{n,m}.$  Hence for any random variables $(\Theta,R)$ with $R \cdot e(\Theta) \in \overline{\D}$ and measurable with respect to $\sigma( \{|F_{n,m}(\theta,r)| : re(\theta) \in \overline{\D}\}),$ 
  \[
    (\arg(F_{n,m}(\Theta,R)) , |F_{n,m}(\Theta,R)|)
  \]
  are independent and $\arg(F_{n,m}(\Theta,R))$ is uniformly distributed on $[0,2\pi].$
  \label{lem:rotation}
\end{lemma}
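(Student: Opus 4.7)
The plan is to derive the process-level identity from the rotational invariance of the individual coefficients $(\zeta_n)$, and then extract the second assertion by a conditioning argument. The proof is essentially formal once one sets up the right conditioning.

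First I would establish the sequence-level identity $(\Omega \zeta_n)_{n \geq 1} \lawequals (\zeta_n)_{n \geq 1}$ for any $\Omega$ uniform on $\T$ and independent of $(\zeta_n)$. This follows from the assumed marginal rotational invariance $e^{i\alpha} \zeta_n \lawequals \zeta_n$ together with the independence of the $\zeta_n$'s: conditional on $\Omega = e^{i\alpha}$, the joint law of $(e^{i\alpha}\zeta_n)_n$ factors across $n$ and each factor has unchanged law, hence integrating over $\alpha$ also leaves the law unchanged. Since the map $(\zeta_n) \mapsto F_{n,m}(\cdot,\cdot)$ is $\C$-linear (see \eqref{eq:Xblock}, \eqref{eq:Fblock}), applying the identity to all coordinates simultaneously yields
\[
(F_{n,m}(\theta,r) : re(\theta) \in \overline{\D}) \lawequals (\Omega F_{n,m}(\theta,r) : re(\theta) \in \overline{\D}),
\]
which is the first assertion.

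For the second assertion, write $F_{n,m}(\theta,r) = |F_{n,m}(\theta,r)|\, e^{iA(\theta,r)}$ where $A$ denotes the argument (defined mod $2\pi$ off the zero set). Set $M := \sigma(\{|F_{n,m}(\theta,r)| : re(\theta) \in \overline{\D}\})$. The first part, projected onto the modulus--argument decomposition, gives
\[
(|F_{n,m}|, A) \lawequals (|F_{n,m}|, A + U \bmod 2\pi),
\]
where $U$ is uniform on $[0,2\pi]$ independent of $F_{n,m}$. Since $(\Theta, R)$ is by hypothesis $M$-measurable, conditioning on $M$ freezes both $(\Theta, R)$ and $|F_{n,m}(\Theta,R)|$, while by Fubini the conditional distribution of $A(\Theta,R) + U \bmod 2\pi$ given $M$ is uniform on $[0,2\pi]$. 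The equality in law above then forces the conditional distribution of $\arg F_{n,m}(\Theta,R) = A(\Theta,R)$ given $M$ to also be uniform on $[0,2\pi]$. This yields both the independence of $\arg F_{n,m}(\Theta,R)$ from $M$ (hence in particular from $|F_{n,m}(\Theta,R)|$) and the uniform marginal.

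The main subtlety---rather than a genuine obstacle---is purely measure-theoretic: one must verify that $F_{n,m}(\Theta,R) \neq 0$ almost surely so that the argument is well defined at $(\Theta,R)$. For $m < \infty$, $F_{n,m}$ is a nonzero random polynomial, so its zero set on $\overline{\D}$ is finite almost surely; for $m = \infty$ one uses that the boundary values exist a.e. and that $F_{n,\infty}$ is almost surely a nontrivial $H^2$ function. Either way, one may restrict attention to the event $\{F_{n,m}(\Theta, R) \neq 0\}$ without loss.
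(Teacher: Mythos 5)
Your proposal is correct and follows essentially the same route as the paper: the process-level identity comes from the rotational invariance and independence of the $\zeta_n$ together with linearity, and the second assertion comes from the fact that $(\Theta,R)$ (and the modulus field) is unchanged under multiplication by $\Omega$, which rotates the argument by an independent uniform angle. Your version merely makes the conditioning on the modulus $\sigma$-algebra and the zero-set caveat explicit, which the paper leaves implicit.
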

\begin{proof}
  The first observation is an immediate corollary of the rotation invariance and independence of $\left\{ \zeta_n \right\}.$  The second follows as $F_{n,m}(\Theta,R) \lawequals \Omega F_{n,m}(\Theta,R)$ as a random function, and $|F_{n,m}(\Theta,R)|$ is independent of $\Omega.$  
\end{proof}

Similarly, we note that for any $\psi \in \R,$  $(\theta,r) \mapsto F_{n,m}(\theta + \psi,r)$ has the same law as when $\psi = 0.$ % This further implies that if we look at the block $F_{n,m}$.

Below, we denote by $\|\cdot\|_\infty$ the supremum norm in $\D$.

\begin{lemma}
  Let $\epsilon > 0$ be any number and $\{q_k\}$ be any sequence with $(k^{\epsilon} q_k) \downarrow 0$ and $\sum_{k=1}^\infty q_k^2 = \infty.$  
  If $F\in H^2(\D)$ is unbounded almost surely,
  then there are deterministic sequences $n_k \uparrow \infty$ (with $n_0 =0$) and $N_k \uparrow \infty$ (with $N_0 = 1$) and a random sequence $\theta_k \in [0,1]$ with $\theta_k \in \sigma \big( \left\{ |F_{0,n_k}| \right\} \big)$, and ${|\theta_k - \theta_{k-1}|} \leq N_k^{-1}2^{-k}$, so that the following hold:
  \begin{enumerate}[\textnormal{(}i\textnormal{)}]
    \item\label{ubi-new} The derivative is not large
    \[
    \Pr\big[\max_{1 \leq j < k} \|F'_{n_{j-1},n_{j}}\|_\infty > N_k\big] \leq 2^{-k}.
    \]
    \item\label{ubii-new} The increments are not large in that
      \[
      \Pr[|F_{n_k,\infty}(0)| > q_{k+1}] \leq 2^{-k-2},
      \text{ and }\,
      \Pr[|F_{n_{k-1},n_k}(\theta_{k-1})| > q_k] \leq 2^{-k}.
      \]
    \item\label{ubiii-new} The increments are not too small in that
      \[
      \Pr\Big[\sup_{|\psi - \theta_{k-1}| \leq {N_k}^{-1}2^{-k}} | F_{n_{k-1},n_k}(\psi) | \leq q_k\Big] \leq 2^{-k}.
      \]
  \end{enumerate}
  \label{lem:ub-new}
\end{lemma}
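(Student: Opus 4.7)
The plan is to build the triples $(n_k, N_k, \theta_k)$ by induction on $k$, carrying as an auxiliary inductive invariant the tail estimate $\Pr[|F_{n_{k-1},\infty}(0)| > q_k/2] \le 2^{-k-1}$. The base case sets $n_0 = 0$, $N_0 = 1$, $\theta_0 = 0$; the invariant at $k=1$ is arranged by an appropriate initial choice. At each inductive step I choose $N_k$ first, then $n_k$ along an appropriate subsequence large enough to meet several probability bounds simultaneously, and finally $\theta_k$ by a measurable selection in the prescribed window around $\theta_{k-1}$.

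For $N_k$: each $F'_{n_{j-1}, n_j}$ with $j < k$ is almost surely a bounded polynomial on $\overline{\D}$, so $M \coloneqq \max_{j<k}\|F'_{n_{j-1},n_j}\|_\infty$ is a.s.\ finite and a deterministic $N_k$ satisfying $\Pr[M > N_k] \le 2^{-k}$ can be chosen, giving \ref{ubi-new}. The key observation for selecting $n_k$ is that $\theta_{k-1}$ is determined by $\zeta_1,\dots,\zeta_{n_{k-1}}$ and is therefore independent of $F_{n_{k-1},\cdot}$. Conditioning on $\theta_{k-1}$ and invoking the rotation invariance of Lemma~\ref{lem:rotation} replaces $\theta_{k-1}$ by the deterministic angle $0$ in all the probabilities appearing in \ref{ubii-new} and \ref{ubiii-new}. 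Then for \ref{ubiii-new}, choose an integer $N \geq N_k 2^{k-1}$ so that Lemma~\ref{lem:smallarc} applies to the smaller window $\{|\psi| \le 1/(2N)\}$; the sup of $|F_{n_{k-1},m}|$ over this window tends a.s.\ to $\infty$ as $m\to\infty$, so its probability of being $\le q_k$ falls below $2^{-k}$ once $m$ is large. For the second half of \ref{ubii-new}, Lemma~\ref{lem:min} gives $\liminf_m \Pr[|F_{n_{k-1},m}(0)| > q_k] \le \Pr[|F_{n_{k-1},\infty}(0)|>q_k] \leq 2^{-k-1}$ by the inductive invariant, so we may pick $n_k$ along the right subsequence to achieve probability $<2^{-k}$. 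The first half of \ref{ubii-new} and the preservation of the invariant at the next step follow by taking $n_k$ large enough that $\Pr[|F_{n_k,\infty}(0)|> q_{k+1}/2]\le 2^{-k-2}$, which is possible because $F_{n,\infty}(0)\to 0$ in $L^2$ (hence in probability) as $n\to\infty$. All four constraints are met by taking $n_k$ sufficiently large on a common subsequence.

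Finally, $\theta_k$ may be any $\sigma\bigl(\{|F_{0,n_j}|\}_{j \le k}\bigr)$-measurable selection inside the window $\{\psi : |\psi-\theta_{k-1}|\leq N_k^{-1}2^{-k}\}$; for definiteness, take the argmax of $\sup_{r \in [0,1]} |F_{0,n_k}(\cdot, r)|$ on that window. I expect the main obstacle to be handling the randomness of $\theta_{k-1}$ inside the probability bounds, since the input Lemmas~\ref{lem:smallarc} and~\ref{lem:min} are stated only for deterministic angles; the mechanism above --- measurability of $\theta_{k-1}$ with respect to the past, independence of the new block $F_{n_{k-1},\cdot}$ from that past, and rotation invariance --- is precisely designed to circumvent this. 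A secondary subtlety is that the bound on $|F_{n_{k-1},n_k}(\theta_{k-1})|$ already requires $n_{k-1}$ to have been chosen at the previous step with threshold $q_k$ in view, which is the reason for carrying the recursive tail invariant through the induction.
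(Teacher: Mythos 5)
Your proposal is correct and follows essentially the same inductive scheme as the paper's proof: $N_k$ is chosen from the a.s.\ finiteness of $\max_{j<k}\|F'_{n_{j-1},n_j}\|_\infty$, then $n_k$ is taken large along a subsequence using Lemma~\ref{lem:smallarc} for part~(iii), Lemma~\ref{lem:min} together with the tail bound secured at the previous step for the second half of~(ii), and the convergence $F_{n,\infty}(0)\to 0$ in probability for the first half; your treatment of the random centre $\theta_{k-1}$ (measurability with respect to the earlier blocks, independence of the later ones, and the angular shift invariance noted after Lemma~\ref{lem:rotation}) is the same mechanism the paper uses for~(ii), while for~(iii) the paper instead exploits that Lemma~\ref{lem:smallarc} holds a.s.\ simultaneously for all centres --- both reductions are valid. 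The one substantive deviation is your selection of $\theta_k$ as an argmax of $\sup_r|F_{0,n_k}(\cdot,r)|$ over the window: this satisfies the lemma as literally stated, but the paper takes $\theta_k$ to be the point of the window nearest $\theta_{k-1}$ at which $|F_{n_{k-1},n_k}(e(\theta_k))|=q_k$ (defaulting to $\theta_{k-1}$ if none exists), and it is exactly this level-crossing choice --- which events~(ii) and~(iii) plus Borel--Cantelli guarantee is realized for all large $k$ --- that the subsequent proof of Theorem~\ref{thm:dense} needs in order to have $r_k=|F_{n_{k-1},n_k}(e(\theta_k))|=q_k$ eventually, with $\sum r_k^2=\infty$ and $k^\epsilon r_k\to0$. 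So if the lemma is to serve its intended role you should adopt the paper's selection rule for $\theta_k$; nothing else in your construction needs to change, and your base-case handling is no less explicit than the paper's own.
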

\begin{proof}
  We construct the sequences $\{n_k\}$, $\{N_k\}$ and $\left\{ \theta_k \right\}$ inductively.  Suppose that $n_{k-1}$ and $\theta_{k-1}$ have been chosen.  For $k=1,$ we take $n_0=\theta_0=0.$ 

  For any $1 \leq j < k,$ $F'_{n_{j-1},n_{j}}$ is an almost surely continuous random function on $\overline{\D}$.  Hence, we can find an integer $N_k$ sufficiently large that property \eqref{ubi-new} is satisfied.
  Then from Lemma \ref{lem:smallarc},
  \[
    \sup_{|\psi - \theta_{k-1}| \leq {N_k}^{-1}2^{-k}} | F_{n_{k-1},m}(\psi) | \Asto[m] \infty.
  \]
  Now we pick $n_k$ sufficiently large that property \eqref{ubiii-new} and the first part of property \eqref{ubii-new} are satisfied. Note that these hold for \emph{all} $n_k$ sufficiently large. Now the remaining part of property \eqref{ubii-new},
  \[
  	\Pr[|F_{n_{k-1},n_k}(\theta_{k-1})| > q_k] \leq 2^{-k},
  \]
  is possible because we already have
  \[
  	\Pr[|F_{n_{k-1},\infty}(0)| > q_k] \leq 2^{-k-1},
  \]
  and by Lemma \ref{lem:min}, and the independence of $F_{n_{k-1},\infty}$ from $\theta_{k-1}$,
  \[
    \begin{aligned}
      \liminf_{n_k \to \infty}
      \Pr[|F_{n_{k-1},n_k}(\theta_{k-1})| > q_k]
      &\leq
      \Pr[|F_{n_{k-1},\infty}(\theta_{k-1})| > q_k] \\
      &=
      \Pr[|F_{n_{k-1},\infty}(0)| > q_k]
      \leq 2^{-k-1}.
  \end{aligned}
  \]
  Hence there is a choice of $n_k$ large enough that all three properties occur.
  
  Within the interval $\left\{ \psi \in \R/\Z : |\psi - \theta_{k-1}| \leq{N_k}^{-1}2^{-k}\right\},$ we let $\theta_k$ be the closest point to $\theta_{k-1}$ so that $|F_{n_{k-1},n_k}(e(\theta_k))|=q_k,$ breaking ties in a measurable way. If no such $\theta_k$ exists, we let $\theta_k = \theta_{k-1}.$  
  %As $|\theta_k - \theta_{k-1}| \leq {N_k}^{-1}2^{-k}$ provided
  %\tfrac{1}{2N_k},$ provided 
  %$\max_{1 \leq j \leq k} \|F'_{n_{j-1},n_{j}}\|_\infty \leq N_k$
  %\[
  %  (1+\max_{1 \leq j \leq k} \|F'_{n_{j-1},n_{j}}\|_\infty)|\theta_k - \theta_{k-1}| \leq \tfrac{1}{2N_k} + 2^{-k-1} \leq 2^{-k},
  %\]
  %as we chose $N_k \geq 2^k.$
  %Hence by \eqref{ubiv} and Borel--Cantelli \eqref{ubi} is also satisfied.
\end{proof}

Let $\left\{ n_k \right\}$ and $\left\{ \theta_k \right\}$ be the sequence from Lemma \ref{lem:ub-new}.  In terms of these sequences, let $r_k = |F_{n_{k-1},n_k}(e(\theta_k))|$ and let $\omega_k = \frac{F_{n_{k-1},n_k}(e(\theta_k))}{|F_{n_{k-1},n_k}(e(\theta_k))|}.$  Then from Lemma \ref{lem:rotation}, and Borel-Cantelli the sequence $\left\{ (r_k,\omega_k) \right\}$ are independent random variables with $k^{\epsilon} r_k \to 0$ and $\sum r_k^2 = \infty$ almost surely. We then introduce the process
\begin{equation}
  S_k \coloneqq \sum_{j=1}^k r_j \omega_j \quad k = 0,1,2,\dots,
  \label{eq:Sk}
\end{equation}
which is dense in $\C$ almost surely.  We give a proof for convenience in Lemma \ref{lem:bmcoupling} (cf. \cite[Chap. 12, Sec. 6]{Kahane}).

We now show the density of the image.
%\begin{theorem}\label{thm:density}
%  If $F$ is unbounded almost surely, then  $F(\D)$ is dense in $\C.$
  %Moreover there is a point $\omega \in \T$ so that the image of the ray $F([0,\omega))$ is dense in $\C.$
%\end{theorem}
\begin{proof}[Proof of Theorem \ref{thm:dense}]
  
  We let $\lambda_\ell = 1-N_\ell^{-1}2^{-\ell}$ which increases to $1$.

  Observe that by construction of $(S_k :k)$ for any $k < j$
  \begin{equation*}%\label{eq:BMtube}
    S_j-S_k 
    =
    \sum_{\ell=k+1}^j (S_\ell-S_{\ell-1})
    =
    \sum_{\ell=k+1}^j F_{n_{\ell-1},n_\ell}(\theta_\ell).
  \end{equation*}
  Using Lemma \ref{lem:ub-new} part \eqref{ubi-new} and Borel--Cantelli, there is a random $k_0$ so that for any $k+1 \leq \ell < j$ with $k \geq k_0,$
  \[
    |
    F_{n_{\ell-1},n_\ell}(\theta_\ell, \lambda_j)
    -
    F_{n_{\ell-1},n_\ell}(\theta_j,\lambda_j )
    |
    \leq 
    \sum_{p=\ell+1}^{j-1}
    \|F_{n_{\ell-1},n_\ell}'\|_\infty |\theta_p - \theta_{p-1}|
    \leq 2^{-\ell}.
  \]
  We also have that
%  \todo[inline]{Do we want $F_{n_{\ell-1},n_\ell}(\theta_\ell) - F_{n_{\ell-1},n_\ell}(\theta_\ell,\lambda_j)$ below instead?}
  \[
     |
    F_{n_{\ell-1},n_\ell}(\theta_\ell)
    -
    F_{n_{\ell-1},n_\ell}(\theta_\ell,\lambda_j)
    |
    \leq 
    \|F_{n_{\ell-1},n_\ell}'\|_\infty
    (1 - \lambda_j)
    \leq 2^{-\ell}.
  \]
  Hence we have for any $k \geq k_0.$ 
  \begin{equation}\label{eq:BMtube}
    |S_j-S_k 
    -F_{n_k,n_j}(\theta_j, \lambda_j)|
    \leq \sum_{\ell=k+1}^j 2^{-\ell+1} \leq 2^{-k+1}.
  \end{equation}

  Note that by its definition in Lemma \ref{lem:ub-new}, the sequence $\{ \theta_\ell \}$ is Cauchy in $\R/\Z$. So there is a $\theta_*$ so that $F_{0,n_k}(\theta_\ell,\lambda_\ell) \Asto[\ell] F_{0,n_k}(\theta_*,1).$  Then define for any $z \in \C$ and $k,p \in \N$ the stopping time (with respect to the discrete filtration generated by $\{S_j : j \in \N\}$)
  \begin{equation}\label{eq:sigmap}
    \sigma_p = \inf\left\{ \ell > p : |S_\ell - S_k - (z - F_{0,n_k}(\theta_*))| \leq 2^{-p} \right\},
  \end{equation}
  then each $\sigma_p < \infty$ almost surely from neighborhood recurrence. Also, by the first part of Lemma~\ref{lem:ub-new}~\eqref{ubii-new}, the contraction principle, and Borel--Cantelli, we have that $|F_{n_{\sigma_p},\infty}(\theta_{\sigma_p},\lambda_{\sigma_p})| \Asto[p] 0.$

  Finally we put everything together (c.f.\ \eqref{eq:BMtube}, \eqref{eq:sigmap}).
  \[
    \begin{aligned}
    &\liminf_{p \to \infty}
    |F(\theta_{\sigma_p},\lambda_{\sigma_p})-z|\\
    &\leq 
    \liminf_{p \to \infty}
    \{|F_{0,n_k}(\theta_{\sigma_p},\lambda_{\sigma_p}) + F_{n_k, n_{\sigma_p}}(\theta_{\sigma_p},\lambda_{\sigma_p})-z|+ |F_{n_{\sigma_p},\infty}(\theta_{\sigma_p},\lambda_{\sigma_p})|\} \\
    &\leq
    \liminf_{p \to \infty}
    \{
      |S_{\sigma_p}\!-\!S_k - (z - F_{0,n_k}(\theta_{\sigma_p},\lambda_{\sigma_p}))|
      +
      |S_{\sigma_p}\!-\! S_k -F_{n_k,n_{\sigma_p}}(\theta_{\sigma_p},\lambda_{\sigma_p})|
    \} \\
    &\leq
    \liminf_{p \to \infty}
    \{
      |S_{\sigma_p}\!-\! S_{k} - \bigl(z - F_{0,n_k}(\theta_{*})\bigr)|
      +
      |F_{0,n_k}(\theta_*) - F_{0,n_k}(\theta_{\sigma_p},\lambda_{\sigma_p})|
      +
      2^{-k}
    \} \\
    &\leq 2^{-k}.
    \end{aligned}
  \]
  As we may do this for any $k$ sufficiently large, we conclude that there is a sequence of points (given by $\omega_k =
  \lambda_{\sigma_p^{(k)}}e(\theta_{\sigma_p^{(k)}})$ so that $F(\omega_k) \Asto z.$
  As we may do this for all rational complex numbers, this completes the proof.
\end{proof}

\begin{remark}
  The proof above uses $\lambda_\ell = 1 - N_\ell^{-1}2^{-\ell}$ which therefore constructs a path that approaches the limit $\theta_* = \lim \theta_\ell$ nontangentially, since $|\theta_\ell - \theta_*| \leq N_\ell^{-1}2^{-\ell+1}.$
\end{remark}

\section{Coverage Preliminaries}\label{sec:prelim}

\subsection*{Concentration}
We shall use concentration of measure for Gaussian processes, and it will be convenient to formulate some of these statements using the sub-gaussian Orlicz norm $\|\cdot \|_{\psi_2}$ which is defined by for a random variable $X$ by 
\[
  \|X\|_{\psi_2} = \inf\{ t \geq 0 : \Exp e^{|X|^2/t^2} \leq 2\}.
\]
For a general reference on its properties, see for example \cite[Section 2.5]{Vershynin}. 
This leads directly to a tail bound, by Markov's inequality
\[
  \Pr(|X| \geq t) \leq 2\exp(-t^2/\|X\|_{\psi_2}^2),
\]
and so we also get that $\Exp |X| \leq C \|X\|_{\psi_2}$ for some absolute constant $C>0$.
Moreover, finiteness of the Orlicz norm leads to a bound for the moment generating function: for all $\lambda > 0$ and for some absolute constant $C>0$
\[
\Exp \exp(\lambda X) 
\leq \exp\Bigl( \lambda \Exp X + C\lambda^2 \|X\|_{\psi_2}^2\Bigr)
\leq \exp\Bigl( C(\lambda \|X\|_{\psi_2} + \lambda^2 \|X\|_{\psi_2}^2)\Bigr).
\]

\subsection*{Gaussian homing}

The next lemma shows that if we take two weakly correlated complex Gaussians, and we select the Gaussian that is closer to a desired target direction, we create a sub-gaussian random variable with a mean which points towards the desired target.  This is the basis of how we construct the homing process.
\begin{lemma}\label{lem:homing}
  Let $(Z_1,Z_2)$ be standardized jointly complex normals with correlation 
  $\rho \coloneqq \Exp( Z_1 \overline{Z_2}).$
  There are positive constants $\rho_{H}, \delta, r_0$ so that 
  for all $u \in \C$ with $|u| \geq r_0/2$ and 
  all $|\rho| < \rho_{H}$,
  then with $Z_*$ given by whichever of $Z_j$ maximizes $\Re( \overline{u} Z_j)$ for $j\in \{1,2\}$,
  \[
    \Exp
    \exp\bigl(
    \lambda |Z_* - u|
    \bigr)
    \leq
    \exp\bigl(
    \lambda(|u| - \delta)
    +C_H \lambda^2
    \bigr)
    \quad
    \text{for all } \lambda \geq 0
  \]
  for an absolute constant $C_H.$
\end{lemma}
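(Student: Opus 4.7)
The plan is to reduce by rotational invariance to $u > 0$ real, derive the pointwise inequality $|Z_*-u| \le u - X_* + |Z_*|^2/(2u)$ (with $X_* := \Re Z_*$), take expectations to obtain $\Exp|Z_*-u| \le |u| - \delta$, and then conclude with a sub-Gaussian estimate for the centered fluctuation $V := |Z_*-u| - \Exp|Z_*-u|$.

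The joint distribution of $(Z_1, Z_2)$ is invariant under $(Z_1, Z_2) \mapsto (e^{i\alpha}Z_1, e^{i\alpha}Z_2)$ (which preserves $\rho$); performing this rotation while sending $u \mapsto e^{i\alpha}u$ preserves both $|Z_j - u|$ and the selection rule, so I may assume $u = |u| > 0$, whence $X_* = \max(X_1, X_2)$ for $X_j := \Re Z_j$. Writing $Z_* = X_* + iY_*$ and expanding squares gives
\[
\bigl(u - X_* + |Z_*|^2/(2u)\bigr)^2 - |Z_*-u|^2 = \bigl(X_* - |Z_*|^2/(2u)\bigr)^2 \ge 0,
\]
while $u - X_* + |Z_*|^2/(2u) = \bigl((u-X_*)^2 + u^2 + Y_*^2\bigr)/(2u) > 0$, so taking square roots yields the desired pointwise bound. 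Because $(Z_1,Z_2)$ is jointly circular complex Gaussian (so $\Exp[Z_1 Z_2] = 0$, as holds in the intended GAF application), the real parts $(X_1, X_2)$ are real bivariate Gaussian with marginal variance $1/2$ and correlation $\Re\rho$; hence
\[
\Exp X_* = \tfrac12\Exp|X_1 - X_2| = \sqrt{(1 - \Re\rho)/(2\pi)} \ge c_0 > 0 \qquad \text{for } |\rho| \le \rho_H := \tfrac12.
\]
Together with $\Exp|Z_*|^2 \le \Exp|Z_1|^2 + \Exp|Z_2|^2 = 2$ and $|u| \ge r_0/2 \ge 2/c_0$, taking expectation in the pointwise bound yields $\Exp|Z_*-u| \le u - c_0 + 1/u \le u - c_0/2 =: u - \delta$.

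For the MGF, the triangle and reverse-triangle inequalities $u - |Z_*| \le |Z_*-u| \le u + |Z_*|$ imply $\bigl||Z_*-u| - u\bigr| \le |Z_*|$, and the previous step shows $|u - \Exp|Z_*-u|| \le C$ absolutely, so $|V| \le |Z_1| + |Z_2| + C'$. The right side has an absolutely bounded Orlicz $\psi_2$-norm (each $|Z_j|$ is Rayleigh, with $|Z_j|^2 \sim \mathrm{Exp}(1)$), so since $V$ is centered, $\Exp e^{\lambda V} \le e^{C_H \lambda^2}$ for all $\lambda \ge 0$, and multiplying by $e^{\lambda\Exp|Z_*-u|} \le e^{\lambda(u-\delta)}$ gives the claim. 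The most delicate point is the expectation estimate: showing $\Exp X_* \ge c_0 > 0$ uniformly over admissible $\rho$ relies on the circular Gaussian structure of $(X_1,X_2)$ and on the hypothesis $\rho_H < 1$; the sub-Gaussian concentration step is then routine, since $|Z_*-u|$ sits between two absolutely sub-Gaussian random variables centered near $u$.
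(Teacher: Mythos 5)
Your proof is correct, but it takes a genuinely different route from the paper's in the key step, the mean bound $\Exp|Z_*-u|\le |u|-\delta$. The paper handles the correlation by a coupling/continuity argument: it couples $(Z_1,Z_2)$ to an independent pair $(W_1,W_2)$ as $|\rho|\to 0$, reduces (via uniform integrability) to the independent case, and there computes $\Exp|W_*-u|^2=u^2-\sqrt{2/\pi}\,u+1$ exactly and applies Jensen; this produces a non-explicit, small $\rho_H$. You instead treat all $|\rho|\le\rho_H$ for any fixed $\rho_H<1$ directly, using the elementary pointwise inequality $|Z_*-u|\le u-\Re Z_*+|Z_*|^2/(2u)$ (whose verification $\bigl(u-X_*+|Z_*|^2/(2u)\bigr)^2-|Z_*-u|^2=\bigl(X_*-|Z_*|^2/(2u)\bigr)^2\ge 0$ is correct) together with the identity $\Exp\max(X_1,X_2)=\tfrac12\Exp|X_1-X_2|=\sqrt{(1-\Re\rho)/(2\pi)}$, which stays uniformly positive for $|\rho|\le\rho_H$; this buys explicit constants $\rho_H,\ \delta,\ r_0$ and avoids any limiting argument. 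Your sub-Gaussian step is also simpler than the paper's (which centers with an independent copy and bounds a conditional expectation in $\psi_2$): you use $\bigl||Z_*-u|-u\bigr|\le|Z_*|\le|Z_1|+|Z_2|$ and the triangle inequality for the $\psi_2$-norm, which suffices. The one point to be explicit about is that your computation of $\Cov(\Re Z_1,\Re Z_2)=\tfrac12\Re\rho$ uses the properness condition $\Exp Z_1Z_2=0$; this is indeed the paper's (and the application's) convention for jointly complex normals, and the paper's own reduction by rotation invariance uses the same circularity implicitly, so flagging it as you did is appropriate rather than a gap.
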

\begin{proof}
  We may assume $u \geq r_0/2,$
  by the rotational invariance of the law of $(Z_1,Z_2)$.
  Let $\cE = \cE(\rho) = \Exp |Z_*-u|$ and let $Y_* = |Z_* - u| - \cE$.  
  The claim follows from showing that $\cE \leq u-\delta$, for some $\delta>0$, all $u \geq r_0/2$ and all $|\rho| < \rho_H$, and from showing that $\|Y_*\|_{\psi_2} \leq C_H$ for an absolute constant $C_H$.
  Indeed, having shown this, we have
  \[
    \Exp e^{\lambda Y_*} \leq e^{C'_H \lambda^2}
  \]
  for some other constant $C'_H$ and all $\lambda \in \R$.  Rearranging this equation, and restricting to $\lambda \geq 0,$ the claim follows for $|Z_*-u|.$

  We start with bounding the mean.
  There exists a probability space supporting $(Z_1,Z_2)$ and a pair of \emph{independent} standard complex normals $(W_1,W_2)$ so that
  \[
    \lim_{|\rho| \to 0} 
    \Exp
    \exp
    \bigl(
    |Z_1 - W_1| + |Z_2-W_2|
    \bigr)
    =1.
  \]
  Hence if we let $W_*$ be the $W_j$ with the larger real value,
    \[
    \lim_{|\rho| \to 0} 
    \Exp
    \exp
    \bigl(
    |Z_* - W_*|
    \bigr)
    =1.
  \]
  Thus by uniform integrability
  it suffices to show for some $\delta \in (0,1)$ so that
  \[
    \cE(0)
    =
    \Exp
    |W_* - u|
    \leq
    u  - 2\delta
  \]
  for all $u \geq r_0$.
  
  We can express $W_* = X+iY$ where $X \coloneqq \max\{X_1,X_2\}$ for three independent real normals $\{X_1,X_2,Y\}$ of variance $\tfrac12$.  Then
  \[
    \Exp|W_* - u|^2
    =
    \Exp[{(u  - X)^2 + Y^2}]
    =
    u^2 - \sqrt{\tfrac{2}{\pi}} u + 1.
  \]
  Thus, by Jensen's inequality, for $u \ge 7$,
  \[
    \cE(0)
    =
    \Exp
    |W_* - u|
    \leq
    u-\tfrac14.
  \]
%As for the lower bound, we can simply use $|W_*-u| \geq u-|X|$ and proceed similarly.

It remains to show that the sub-gaussian norm of the centered random variable $Y_*$ is uniformly bounded.  Observe that if $V_*$ is an independent copy of $Z_*$
\[
  \begin{aligned}
  \| Y_*\|_{\psi_2}
  =
  \| |Z_* - u| - \mathcal{E}(\rho)\|_{\psi_2}
  &=\| \Exp(|Z_* - u| - |V_*-u| \mid Z_*)\|_{\psi_2}\\
  &\leq
  \| \Exp(|Z_* - V_*|~\mid Z_*) \|_{\psi_2},
  \end{aligned}
\]
where we have used the reverse triangle inequality.  
Then since $\||Z_*|\|_{\psi_2} \leq \|Z_1\|_{\psi_2} +\|Z_2\|_{\psi_2}$ and 
$\Exp(|Z_* - V_*|~\mid Z_*) \leq |Z_*| + 2\Exp |V_*|$.
So we conclude there is an absolute constant $C_H$ so that
\(
  \| Y_*\|_{\psi_2}
  \leq C_H.
\)
\end{proof}

\subsection{GAF tools}

We shall also use some tools developed specifically for Gaussian analytic functions.
The first gives upper bounds for general GAFs on the disk.

\begin{lemma}[{\cite[Lemma 2]{BNPS18}}]\label{lem:GAFtail} 
  Let $G$ be a GAF on $\mathbb{D}$, and $s \in(0, \delta)$. Put
  $$
  \sigma_G^2(r)=\sup _{r \mathbb{D}} \mathbb{E}\left[|G|^2\right] .
  $$
  Then, for every $\lambda>0$,
  $$
  \mathbb{P}\left\{\sup _{r \mathbb{D}}|G|>\lambda \sigma_G(r+s)\right\} \leq C s^{-1} \exp \left(-c \lambda^2\right)
  $$
  and in particular
  $$
  \mathbb{P}\left\{\sup _{r \mathbb{D}}|G|>\lambda \sigma_G\left(\frac{1}{2}(1+r)\right)\right\} \leq C(1-r)^{-1} \exp \left(-c \lambda^2\right).
  $$
\end{lemma}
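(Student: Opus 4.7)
The plan is to couple two classical tools: subharmonic majorization of $|G|^2$, which reduces the supremum of $|G|$ on $r\D$ to an $L^2$ average on the slightly larger disk $(r+s)\D$, and the Borell--TIS concentration inequality, which concentrates the supremum of a Gaussian process around its mean with a tail governed by the pointwise variance. The first tool controls $\Exp\sup_{r\D}|G|$ in terms of $\sigma_G(r+s)$, and the second delivers the Gaussian decay in $\lambda$.

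For the mean bound, since $|G|^2$ is subharmonic and $D(z,s)\subset (r+s)\D$ for every $z\in r\D$, the sub-mean-value inequality gives
\[
|G(z)|^2 \leq \frac{1}{\pi s^2}\int_{D(z,s)}|G|^2\,dA \leq \frac{1}{\pi s^2}\int_{(r+s)\D}|G|^2\,dA.
\]
Taking the supremum in $z$, then expectation and square root (using Jensen), yields $\Exp\sup_{r\D}|G| \leq C_0 s^{-1}\sigma_G(r+s)$ for an absolute constant $C_0$. For the concentration step, write $|G(z)|=\sup_{\theta\in[0,2\pi]}\Re(e^{-i\theta}G(z))$, which exhibits $M := \sup_{r\D}|G|$ as the supremum of the centered Gaussian process $X_{z,\theta}:=\Re(e^{-i\theta}G(z))$ on the compact set $\overline{r\D}\times[0,2\pi]$ (continuity of sample paths is automatic from analyticity of $G$). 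Each $X_{z,\theta}$ has variance at most $\tfrac{1}{2}\sigma_G^2(r+s)$, so Borell--TIS gives
\[
\Pr[M > \Exp M + u] \leq \exp\bigl(-u^2/\sigma_G^2(r+s)\bigr).
\]

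For $\lambda \geq 2C_0/s$, setting $u=(\lambda - C_0/s)\sigma_G(r+s)\geq \tfrac{\lambda}{2}\sigma_G(r+s)$ yields $\Pr[M > \lambda\sigma_G(r+s)]\leq \exp(-\lambda^2/4)$, which is dominated by $Cs^{-1}e^{-c\lambda^2}$ for suitable $C,c$. For $\lambda \in [0, 2C_0/s]$ the estimate is to be absorbed by the $s^{-1}$ prefactor. The second estimate of the lemma then follows by specializing $s = (1-r)/2$, so that $r+s = (1+r)/2$ and $Cs^{-1} = 2C/(1-r)$. The main obstacle is the sharp dependence on $s$ in the prefactor: the naive subharmonic mean bound loses a factor of $s$, so Borell--TIS alone gives the Gaussian tail only once $\lambda \gtrsim 1/s$. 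Bridging the intermediate regime requires either a sharper mean bound --- for instance via Dudley's chaining, which gives $\Exp M \lesssim \sigma_G(r+s)\sqrt{\log(1/s)}$ --- or a careful direct argument that uses the $s^{-1}$ prefactor to absorb the transition between the trivial and Gaussian regimes.
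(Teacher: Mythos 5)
The paper does not prove this lemma at all; it is quoted verbatim from \cite[Lemma 2]{BNPS18}, so the only meaningful question is whether your argument is complete on its own terms. It is not, and the gap is exactly the one you flag in your last sentences but do not close. Your two regimes do not meet: the subharmonicity bound gives only $\Exp M \leq C_0 s^{-1}\sigma_G(r+s)$, so Borell--TIS yields the Gaussian tail only for $\lambda \gtrsim s^{-1}$, while the trivial absorption $Cs^{-1}e^{-c\lambda^2}\geq 1$ only covers $\lambda \lesssim \sqrt{\log(1/s)}$. For $\lambda$ between $\sqrt{\log(1/s)}$ and $s^{-1}$ the claimed bound is a genuine, nontrivial statement (the right-hand side is small), and nothing in your write-up establishes it. Saying that ``Dudley's chaining gives $\Exp M \lesssim \sigma_G(r+s)\sqrt{\log(1/s)}$'' is not a proof: that estimate is precisely the missing core of the lemma, and it requires an actual entropy (or net) computation --- e.g.\ covering the circle $|z|=r$ by $\sim s^{-1}$ arcs, using Gaussian tails at the net points, and controlling the oscillation between net points by Cauchy estimates for $G'$ on the disk of radius $r+s$, where $\Exp|G'|^2 \lesssim s^{-2}\sigma_G^2(r+s)$. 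That route in fact produces the stated $Cs^{-1}$ prefactor directly and is the standard proof of such statements; once you have either the net bound or the chaining bound on $\Exp M$, your Borell--TIS step does finish the argument (with $c$ chosen small relative to the constant in the mean bound, so the low-$\lambda$ regime is absorbed by $Cs^{-1}$).

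Two smaller points: the Borell--TIS application itself is fine (pointwise variance of $\Re(e^{-i\theta}G(z))$ on $\overline{r\D}\times[0,2\pi]$ is at most $\tfrac12\sigma_G^2(r)\leq\tfrac12\sigma_G^2(r+s)$, and finiteness of $\Exp M$ follows from your subharmonicity bound), and the reduction of the second display to the first by taking $s=(1-r)/2$ is correct. So the skeleton is sound; what is missing is the quantitative bound on $\Exp\sup_{r\D}|G|$ with only a $\sqrt{\log(1/s)}$ (rather than $s^{-1}$) loss, without which the lemma is not proved for the full range of $\lambda$.
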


The second tool is rather for analytic functions, but it is the basic lemma that allows to cover a disk.

\begin{lemma}[{\cite[Lemma 1]{Fuchs}}]\label{lem:cover}
If $g(z)$ is analytic in $\mathbb{D}(0, R)$, and if
$$
\left|g'(z)\right| \leq S, \quad \forall|z|<R, \quad\left|g'(0)\right| \geq A>0,
$$
then $g(z)$ assumes in $\mathbb{D}(0, R)$ every value $w$ lying in the disk
$$
\mathbb{D}\left(g(0), CA^2 R S^{-1}\right),
$$
where $C>0$.
\end{lemma}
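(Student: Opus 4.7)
My plan is a standard Bloch-type argument comparing $g$ to its linearization at the origin and applying Rouché's theorem. Reducing to the case $g(0)=0$ (which doesn't affect which values are attained, after translating the target $w$), I set $L(z) \defeq g'(0)z$ and $h(z) \defeq g(z) - L(z)$, so that $h$ is analytic on $\mathbb{D}(0,R)$ with $h(0)=h'(0)=0$. Since $|g'(0)| \le S$ by hypothesis (in particular $A \le S$), we have $|h'(z)| \le 2S$ throughout the disk.

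The next step quantifies the smallness of $h$ near $0$ using the Schwarz lemma: the function $z \mapsto h'(Rz)/(2S)$ maps $\mathbb{D}$ into itself and vanishes at $0$, so $|h'(z)| \le 2S|z|/R$ on $\mathbb{D}(0,R)$. Integrating along a radial segment then gives the key pointwise estimate $|h(z)| \le S|z|^2/R$. Now fix $\rho \in (0,R)$. On the circle $|z|=\rho$, for any target $w$ with $|w| < A\rho - S\rho^2/R$, we have $|h(z)-w| \le S\rho^2/R + |w| < A\rho \le |L(z)|$, so Rouché's theorem applied to the pair $L$ and $g-w = L + (h - w)$ guarantees that $g$ takes the value $w$ somewhere in $\mathbb{D}(0,\rho)$. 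Optimizing the right-hand side by choosing $\rho = AR/(2S)$ — which lies in $(0,R)$ because $A \le S$ — yields a covering radius of $A^2R/(4S)$, establishing the claim with $C = \tfrac14$.

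I don't foresee a substantive obstacle; this is a classical argument. The only small items of care are the strict inequality needed for Rouché (hence the open disk in the conclusion), and the verification that the optimal $\rho$ lies inside $(0,R)$, both of which follow from the observation $A \le S$.
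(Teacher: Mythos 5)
Your argument is correct. Note that the paper does not prove this lemma at all---it is quoted verbatim from Fuchs---so there is no internal proof to compare against; what you have written is the standard Bloch--Landau covering argument (linearize at $0$, bound $h=g-g'(0)z$ by the Schwarz lemma applied to $h'$, then apply Rouch\'e on a circle of optimized radius), and it yields the explicit constant $C=\tfrac14$ for the open disk, which is consistent with the statement ``for some $C>0$.'' Two small points, neither a gap: the map $z\mapsto h'(Rz)/(2S)$ a priori lands in the \emph{closed} unit disk, but since it vanishes at $0$ the Schwarz bound $|h'(z)|\le 2S|z|/R$ still holds (the extremal case of a unimodular constant is excluded); and for Rouch\'e you should note that $L(z)=g'(0)z$ has exactly one zero in $|z|<\rho$, which uses $|g'(0)|\ge A>0$, so $g-w$ indeed has a zero there.
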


\section{Homing}
\label{sec:homing}

In this section we implement the homing mechanism, which will produce short arcs on which a GAF is uniformly close to a desired value.  For how we apply it, this will be applied to the polynomial (recall \eqref{eq:Fblock})
\[
  F_{m,n}(\theta,r)
  \coloneqq
  \sum_{k=m+1}^n X_k(\theta,r),
\]
and we shall refer to the \emph{homing angle process over the blocks} $m$ to $n$ as one defined for this GAF.

We now define for any $\theta \in \R$ 
\[
  \rho_k(\theta) \coloneqq \Exp \big[ X_k(0) \overline{X_k(\theta)} \big] \frks_k^{-2}.
\]
We let $\mathfrak{r}_k$ be the radius
\[
  \mathfrak{r}_k \coloneqq 1-2^{-Lk},
\]
and we assume throughout that $L \geq 2$.
We would like to treat the polynomial $X_k(\psi,r)$ as almost constant in a window 
\begin{equation}\label{eq:window}
  \mathcal{W}_k(\theta,\alpha) \coloneqq\{ (\psi,r) : r \in [\mathfrak{r}_k,1],|e(\theta)-e(\psi)| \leq \alpha 2^{-L k}\}.
\end{equation}
To control the errors from this approximation by a constant, we introduce the local H\"older-$\tfrac12$ constant over the window $\mathcal{W}_k(\theta,\alpha)$ for any $j < k$:
\begin{equation}\label{eq:Delta}
  \begin{aligned}
    \Delta_{j,k}(\theta,\alpha)
    \coloneqq
  &\sup_{\mathcal{S}(\alpha)}
  \left\{
  \frac{
  \bigl|
  F_{j,k}(\theta+\tfrac{\psi_1}{2^{Lk}}, 1-\tfrac{s_1}{2^{Lk}})
  -F_{j,k}(\theta+\tfrac{\psi_2}{2^{Lk}}, 1-\tfrac{s_2}{2^{Lk}})
  \bigr|}
  {
    \sqrt{|\psi_1-\psi_2|
    +|s_1-s_2|}
  }
  \right\}
  , \\
  &
  \text{where }
  \mathcal{S}(\alpha)
  \coloneqq \{(\psi_1,s_1,\psi_2,s_2) : |\psi_1|,|\psi_2| \leq \alpha, 
  s_1,s_2 \in [0,1]\}
  \end{aligned}
\end{equation}
We also write as shorthand $\Delta_{k}(\theta,\alpha) \coloneqq \Delta_{k-1,k}(\theta,\alpha).$

Recall $\rho_H$ from Lemma \ref{lem:homing}. We shall show in Lemma \ref{lem:homingdecoupling}, that there is a sequence of angles $\mathfrak{t}_{m+1},\mathfrak{t}_{m+2},\dots$ such that
\(
  \rho_k(\mathfrak{t}_k) < \rho_H
\)
for all $k \geq m+1$ and such that $|\mathfrak{t}_{k}| \leq 2^{L(k-1)}$.
Define the homing angle process $(\A_k, \homp_k)$ from initial angle $\A_m$ by the rule for $k \geq m$
\begin{equation}\label{eq:homing}
  \begin{aligned}
  \A_{k+1} 
  &=
  \A_{k}
  +
  \chi_k
  \operatorname{argmax}\bigg\{ \Re( (\overline{\homp_k-u}) X_{k+1}( \A_k + x)) : x \in \{0, \mathfrak{t}_{k+1}\} \bigg\}\\
  \homp_{k+1} 
  &=
  F_{m,k+1}(\A_{k+1}),\quad\text{and}\\
  \chi_k &= \one\{|F_{m,k}(\A_{k})-F_{m,k}(\A_{k}+\mathfrak{t}_{k+1})|\leq \sqrt{\eta}\frks_{k+1}\},
\end{aligned}
\end{equation}
where 
$\homp_{m}=0$ and $\eta > 0$ is a constant to be determined.
By rotation invariance the law of the process $(\A_k - \A_m)_k$ does not depend on the initial angle $\A_m$.
\begin{remark}
  We note that the location of the angle $\A_j$ for $m \leq j \leq n$ differs from the initial angle $\A_m$ by at most $\sum_{k=m+1}^j 2^{-L(k-1)} \leq 2^{-Lm + 1}$ and therefore $\mathcal{W}_k(\A_k,1) \subseteq \mathcal{W}_m(\A_m,4)$.
\end{remark}

Our main homing result is the following.  We show that the homing process succeeds (in that it approaches $u$) and moreover that one can simultaneously guarantee that the H\"older constant is good in a neighborhood of this point: 
\begin{proposition}\label{prop:homing}
  For any $\gamma \in (0,\tfrac14)$
  and for any $\alpha \geq 1$
  there is a $c>0$ so that for all $n\geq m/a$ (with $a$ the constant from Lemma \ref{lem:Gammamonotonicity} below) such that
$L_{m,n} \geq \max\{2|u|,\sqrt{V_{m,n}}\}$
\[
    \Pr\biggl(  
      |F_{m,n}(\A_n) - u| + \Delta_{m,n}(\A_n,\alpha)
    > 2\frks_n^{1-\gamma}
    \biggr) \!\leq\! \exp(-c \min\{ L_{m,n}^2/V_{m,n}, n^{\gamma/4}\}).
  \]
\end{proposition}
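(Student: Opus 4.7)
My plan is to bound the two summands of the event $\{|F_{m,n}(\A_n)-u|+\Delta_{m,n}(\A_n,\alpha)>2\frks_n^{1-\gamma}\}$ separately by a union bound, establishing
\[
\prob\bigl(|F_{m,n}(\A_n)-u|>\frks_n^{1-\gamma}\bigr)\le e^{-cL_{m,n}^2/V_{m,n}}
\quad\text{and}\quad
\prob\bigl(\Delta_{m,n}(\A_n,\alpha)>\frks_n^{1-\gamma}\bigr)\le e^{-cn^{\gamma/4}}.
\]

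For the first estimate, set $M_k\coloneqq|\homp_k-u|$ and work with the filtration $\filt_k\coloneqq\sigma(X_{m+1},\dots,X_k)$. By the choice $\rho_{k+1}(\mathfrak{t}_{k+1})<\rho_H$, the pair $(X_{k+1}(\A_k)/\frks_{k+1},X_{k+1}(\A_k+\mathfrak{t}_{k+1})/\frks_{k+1})$ is, conditional on $\filt_k$, a standardized jointly complex normal of correlation modulus $<\rho_H$. Lemma~\ref{lem:homing} applied to the normalized target $(u-\homp_k)/\frks_{k+1}$ yields, on $\{\chi_k=1\}\cap\{M_k\ge r_0\frks_{k+1}/2\}$,
\[
\Exp\bigl[e^{\lambda M_{k+1}}\bigm|\filt_k\bigr]\le \exp\bigl(\lambda(M_k-\delta\frks_{k+1})+C_H\lambda^2\frks_{k+1}^2\bigr),\qquad\lambda\ge 0,
\]
so each ``successful'' step contributes a drift of $\delta\frks_{k+1}$ toward $u$ and a sub-Gaussian fluctuation of scale $\frks_{k+1}$. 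Telescoping in $k$ from $m+1$ to (say) the first $k$ at which $M_k<r_0\frks_{k+1}/2$, and applying Markov's inequality with $\lambda$ tuned to $L_{m,n}/V_{m,n}$, yields the Chernoff-type tail $e^{-cL_{m,n}^2/V_{m,n}}$. Subsequent increments are of size $O(\frks_k)$ and are absorbed into the slack $\frks_n^{1-\gamma}-O(\frks_n)$. The failure event $\{\chi_k=0\}$ is handled by observing that $|F_{m,k}(\A_k)-F_{m,k}(\A_k+\mathfrak{t}_{k+1})|$ is sub-Gaussian of scale $O(\frks_k)$ (via derivative estimates on the preceding blocks combined with regularity of $\frks$), so $\prob(\chi_k=0)\le e^{-c\eta}$ once $\eta$ is chosen large enough relative to the ratios $\frks_j/\frks_{k+1}$.

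For the Hölder estimate, I first fix $\theta$ and apply Gaussian chaining (Fernique--Talagrand) to the family of normalized increments in the definition \eqref{eq:Delta}: the canonical metric on $\mathcal{S}(\alpha)$ is dominated by a multiple of $\frks_n$, so $\Delta_{m,n}(\theta,\alpha)$ has sub-Gaussian norm $O(\frks_n)$, giving $\prob(\Delta_{m,n}(\theta,\alpha)>\frks_n^{1-\gamma})\le e^{-c\frks_n^{-2\gamma}}$. Since $\A_n\in\mathcal{W}_m(\A_m,4)$ and is only resolved at scale $2^{-Ln}$, a union bound over the $O(2^{L(n-m)})$ relevant sub-windows introduces only a polynomial-in-$n$ prefactor, which is absorbed since $\frks_n^{-2\gamma}$ is polynomially large in $n$; balancing the two scales produces the stated exponent $n^{\gamma/4}$.

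The main obstacle is the coupling between the random angle $\A_n$ and the Gaussian functional $\Delta_{m,n}(\A_n,\alpha)$: both depend on the full randomness of $X_{m+1},\dots,X_n$, so no exact decoupling is available and the crude uniform union bound over a discretization is essentially the only elementary route. This is what forces the suboptimal exponent $n^{\gamma/4}$ and the hypothesis $\gamma<\tfrac14$. A secondary subtlety is the bookkeeping around $\chi_k$: a positive-density fraction of failures would destroy the homing drift, so one must choose $\eta$ so that $\sum_k\prob(\chi_k=0)$ is small enough to remain summable against the number of homing steps and not degrade the drift total $\delta L_{m,n}$.
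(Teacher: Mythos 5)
Your skeleton (drift from Lemma~\ref{lem:homing}, exponential supermartingale/Chernoff for the homing part, Gaussian concentration for the H\"older part) matches the paper's, but three of your key steps do not work as stated. First, the failures $\chi_k=0$ cannot be handled by ``choosing $\eta$ large'': the drift estimate itself needs $\sqrt{\eta}\leq \delta/2$, because when the angle switches the value of the partial sum at the new angle differs from $\homp_k$ by up to $\sqrt{\eta}\frks_{k+1}$, and this loss must be dominated by the gain $\delta\frks_{k+1}$ (see Lemma~\ref{lem:super}); and even granting a per-step bound $e^{-c/\eta}$, summing it over the $n-m$ steps gives a quantity that does not decay in $n$ at all, hopelessly larger than $\exp(-c\min\{L_{m,n}^2/V_{m,n},n^{\gamma/4}\})$. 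The paper instead keeps $\eta$ small, does not try to make $\{\chi_k=0\}$ rare, and controls the \emph{cumulative} damage $D_{j,n}$ on the event $\mathcal{L}_{m,n}$ by bounding it through the block H\"older constants, whose $\psi_2$-norms are forced down to $\eta\frks_k$ by taking $L$ large (Lemmas~\ref{lem:homingdecoupling}, \ref{lem:Dmnbound}, \ref{lem:unlucky}). Second, ``telescoping up to the first time $M_k<r_0\frks_{k+1}/2$ and absorbing subsequent increments into the slack'' is a genuine error: after the first visit there may be of order $n$ remaining steps, during which the process can wander a distance of order $\sqrt{V_{T,n}}\gg\frks_n^{1-\gamma}$. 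One must condition on the \emph{last} visit time $J$ and rerun the supermartingale on $[J,n]$, and it is exactly the indices $j$ close to $n$ (within about $n^{\gamma/4}$) that cannot be beaten by the drift and force the $\min$ with $n^{\gamma/4}$ in the exponent (Lemma~\ref{lem:homingcenter}); your claimed bound $e^{-cL_{m,n}^2/V_{m,n}}$ for the first event alone is not attainable in general, e.g.\ a single last step of size $\frks_n$ already produces a tail of order $\exp(-c\frks_n^{-2\gamma})$, which can dwarf $\exp(-cL_{m,n}^2/V_{m,n})$.

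Third, your H\"older estimate relies on a union bound over the ``$O(2^{L(n-m)})$ relevant sub-windows,'' which you call a polynomial-in-$n$ prefactor; it is in fact exponential in $n-m$ (and nothing in the hypotheses forces $n-m=O(\log n)$ --- in the applications $m$ is much smaller than $n$, even $m=0$). Each window only fails with probability about $\exp(-c\,\frks_n^{-2\gamma})\approx\exp(-cn^{2\beta\gamma})$, so the union bound cannot absorb $2^{L(n-m)}$ and the argument collapses. No discretization is needed: the paper's Lemma~\ref{lem:windowbound} bounds $\Delta_{m,n}(\A_n,\alpha)$ \emph{pathwise} by the geometrically weighted sum $\sum_{k}\Delta_k(\A_k,4\alpha)2^{-L(n-k)/2}$ along the homing trajectory (using $\mathcal{W}_n(\A_n,\alpha)\subseteq\mathcal{W}_k(\A_k,4\alpha)$), each summand having $\psi_2$-norm $\lesssim\eta\frks_k$, so the whole sum has $\psi_2$-norm $\lesssim\eta\frks_n$ and the tail at level $\frks_n^{1-\gamma}$ is $\exp(-c\frks_n^{-2\gamma}/\eta^2)\leq\exp(-cn^{\gamma/4})$. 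So the $n^{\gamma/4}$ does not come from a discretization cost, and the dependence between $\A_n$ and the Gaussian field is resolved by this trajectory-wise domination rather than by a uniform bound over angles.
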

\begin{remark}
  Note that this event depends only on the block of the GAF from $m+1$ to $n$, and so we may take $u$ depending on $\filt_m$ and we have the same bound for the conditional probability $\Pr( \cdot \mid \filt_m)$.
\end{remark}

\subsection{Control for the H\"older constant}

We shall show that by choosing $L$ large enough, we can guarantee that the H\"older constant is small while the correlations are small.
In the first step we detail how we use $L$ to improve the properties of the GAF blocks. 
\begin{lemma}\label{lem:homingdecoupling}

  For all $\eta > 0$
  there is an integer $L_0$ so that for all $L \in \N$ with $L \geq L_0$
  \begin{enumerate}
    \item
      if $\Theta_k \lawequals \Unif([-2^{-L(k-1)},2^{-L(k-1)}])$ then for all $k \in \N$
      \[
	\Exp |\rho_k(\Theta_k)| < \rho_H, 
      \]
      where $\rho_H$ is the same as in Lemma~\ref{lem:homing}.
    \item 
      For any $\alpha \geq 1$ there is a constant $C_\alpha$ so that for any $\theta$
      \[
	\|
	\Delta_k(\theta,\alpha)
	\|_{\psi_2}
	\leq C_\alpha \eta \frks_k.
      \]
  \end{enumerate}
\end{lemma}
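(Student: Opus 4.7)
The plan is to prove the two claims by direct Fourier and variance computations on the $k$th block $B_k = [2^{L(k-1)}, 2^{Lk})$, exploiting that both the normalized self-correlation $\rho_k$ and the H\"older-$\tfrac12$ seminorm $\Delta_k$ improve polynomially in $L^{-1}$ as $L\to\infty$. All uniformity in $k$ will come from the regularity $a_n^2 \asymp n^{-1}k^{-2\beta}\omega^2(k)$ on $B_k$ together with $\frks_k^2 \asymp L\,k^{-2\beta}\omega^2(k)$, which causes the $k$-dependent prefactors to cancel.

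For part (1), compute $\rho_k(\theta) = \frks_k^{-2}\sum_{n\in B_k} a_n^2 e(-n\theta)$, so by Cauchy--Schwarz
\[
\Exp|\rho_k(\Theta_k)|^2 = \frks_k^{-4}\sum_{n,m\in B_k} a_n^2 a_m^2\,\sinc\!\bigl(2\pi(n-m)2^{-L(k-1)}\bigr)
\]
(with $\sinc$ extended by $1$ at the origin). Split the double sum at $|n-m| = 2^{L(k-1)}$. In the inner region $|n-m|\le 2^{L(k-1)}$ bound $|\sinc|\le 1$ and use the observation that $\sum_{|m-n|\le 2^{L(k-1)},\,m\in B_k} a_m^2 \lesssim k^{-2\beta}\omega^2(k)$ uniformly in $n\in B_k$; this contributes $O(1/L)$. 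In the outer region use $|\sinc(x)|\le 1/|x|$ together with a partial-fractions estimate for $\sum_m 1/(m|m-n|)$ to also extract $O(1/L)$. Therefore $\Exp|\rho_k(\Theta_k)| \le C/\sqrt{L}$ uniformly in $k$, and it suffices to take $L_0 > C^2/\rho_H^2$.

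For part (2), rescale coordinates by setting $Y(\psi,s) \coloneqq X_k(\theta + \psi\cdot 2^{-Lk},\,1 - s\cdot 2^{-Lk})$, so that $\Delta_k(\theta,\alpha)$ is the H\"older-$\tfrac12$ seminorm of $Y$ on $W = [-\alpha,\alpha]\times [0,1]$. Directly compute the gradient variance,
\[
\Var\bigl(\partial_\psi Y(\psi,s)\bigr) \le (2\pi)^2\,2^{-2Lk}\sum_{n\in B_k} n^2 a_n^2 \asymp k^{-2\beta}\omega^2(k) \asymp \frks_k^2/L,
\]
using $\sum_{n\in B_k} n\asymp 2^{2Lk}$; the same bound holds for $\partial_s Y$. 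Since both components of $\nabla Y$ are smooth Gaussian fields whose $L^2$-increments are Lipschitz on $W$ with constant $\lesssim \frks_k/\sqrt{L}$, applying Dudley's entropy bound and Borell--TIS yields
\[
\bigl\|\sup_W |\nabla Y|\bigr\|_{\psi_2} \le C_\alpha\,\frks_k/\sqrt{L}.
\]
For any Lipschitz function $f$ on a set of diameter $D$ one has $|f(x)-f(y)|/\sqrt{|x-y|} \le \sqrt{D}\,\mathrm{Lip}(f)$, so $\Delta_k(\theta,\alpha) \le \sqrt{2\alpha+1}\,\sup_W|\nabla Y|$, giving $\|\Delta_k(\theta,\alpha)\|_{\psi_2} \le C'_\alpha\frks_k/\sqrt{L}$, which is $\le C''_\alpha \eta \frks_k$ once $L\ge L_0$ is sufficiently large.

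The main technical obstacle is securing the $k$-uniformity: the slowly-varying prefactor $k^{-2\beta}\omega^2(k)$ appears in every variance we compute and must cancel exactly against $\frks_k^2$. The regularity of $a_n$ from Definition~\ref{def:regular} is exactly what makes this possible---the pushforwards of the measures $a_n^2/\frks_k^2$ under $n \mapsto n\cdot 2^{-L(k-1)}$ have a $k$-independent shape up to a multiplicative normalization, so both parts reduce to a single universal calculation on $[1,2^L]$ whose bounds vanish as $L\to\infty$.
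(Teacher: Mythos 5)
Your proposal is correct and follows essentially the same route as the paper: part (1) is the same second-moment computation for $\rho_k(\Theta_k)$ via the $\sinc$ kernel, split into a near-diagonal piece (bounded $\sinc$) and a far piece (decaying $\sinc$) to get a bound of order $1/L$, and part (2) rests on the same variance estimate of order $\frks_k^2/L$ for the rescaled field on the window followed by standard Gaussian supremum concentration (the paper chains the normalized increments directly, while you pass through $\sup_W|\nabla Y|$ with Dudley and Borell--TIS, a cosmetic difference whose extra $L^2$-Lipschitz check for $\nabla Y$ is routine). One bookkeeping remark: the prefactors should be written as $(Lk)^{-2\beta}\omega^2(Lk)\asymp \regs_{Lk}^2\asymp \frks_k^2/L$ (as the paper does via the $\regs_j$) rather than $k^{-2\beta}\omega^2(k)$, so that the implied constants are independent of $L$; since these quantities enter only through ratios in which they cancel, your conclusions are unaffected.
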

\begin{proof}\,\\

  \paragraph{Correlation coefficient.}
  
  The correlation coefficient can be written as
  \[
    \rho_k(\Theta)
    \frks_k^2
    =
    \sum_{n=2^{L(k-1)}}^{2^{Lk}-1}
    a^2_n e(-n \Theta).
  \]
  Taking expectation of the second moment with respect to $\Theta$,
  \begin{equation}\label{eq:peyton}
    \Exp
    | \rho_k(\Theta) |^2
    \frks_k^4
    =
    \sum_{n_1,n_2}
    a^2_{n_1}
    a^2_{n_2}
    \sinc
    ( (n_1-n_2) 2^{-L(k-1)} ),
  \end{equation}
  where we recall that
  \[
    \sinc(x) 
    \coloneqq \frac{1}{2}\int_{-1}^{1} e(xt) \dd t
    = \frac{\sin(2\pi x)}{2\pi x}.
  \]
  Also recalling \eqref{eq:sj-def} and \eqref{eq:Xblock}, we have
  \[
  \frks_k^2 = \sum_{j=L(k-1)}^{Lk-1} \regs_j^2 \asymp L \regs_{Lk-1}^2, \,\,\text{ as }\,\, \regs_j \asymp j^{-\beta} \omega(j).
  \]
  Hence we bound \eqref{eq:peyton} by
  \[
    \Exp
    | \rho_k(\Theta) |^2
    \frks_k^4
    \lesssim
    \sum_{j=L(k-1)}^{Lk-1}
    \regs_j^2(\regs_j^2 + \regs_{j+1}^2)
    +\sum_{j=L(k-1)}^{Lk-1}
    \regs_j^2
    \sum_{\ell = j+2}^{Lk-1}
    2^{-\ell+L(k-1)}\regs_\ell^2.
  \]
  Using that $(\regs_j)_j$ is regularly varying,
  \begin{equation}\label{eq:rhok}
    \Exp
    | \rho_k(\Theta) |^2
    \frks_k^4
    \lesssim
    \frks_k^2
    \regs_{L(k-1)}^2.
  \end{equation}
 \paragraph{Oscillation bound.}
  Fix $\theta \in \R$ and define
  \[
    D\big((\psi_1,t_1),(\psi_2,t_2)\big) 
    \!\!\coloneqq\!\!
    \frac{X_{k}(\theta+\tfrac{\psi_1}{2^{Lk}}, 1-\tfrac{t_1}{2^{Lk}})
      -X_{k}(\theta+\tfrac{\psi_2}{2^{Lk}}, 1-\tfrac{t_2}{2^{Lk}})}
    {
      \sqrt{|\psi_1-\psi_2|
      +|t_1-t_2|}
    }.
  \]
  Then, for $|\psi_1|,|\psi_2| \leq \alpha$ and $t_1,t_2 \in [0,1]$,
  \[
    \begin{aligned}
    \Var(D)
    &\leq
    \sum_{n=2^{L(k-1)}}^{2^{Lk}-1}
    \frac{2a^2_n
      \Bigl(
      \sin^2\left(\tfrac{\pi n (\psi_1-\psi_2)}{ 2^{Lk}}\right) 
      +
      \Bigl((1-\tfrac{t_1}{2^{Lk}})^{n}
      -(1-\tfrac{t_2}{2^{Lk}})^{n}\Bigr)^2
      \Bigr)
    }
    {|\psi_1-\psi_2| + |t_1-t_2|} \\
    &\lesssim
    |\psi_1-\psi_2|
  %\frks_k^2
    \sum_{j={L(k-1)}}^{Lk-1}
    \regs_j^2
    2^{-2(Lk - j)}
    +|t_1-t_2|
    \sum_{j={L(k-1)}}^{Lk-1}
    \regs_j^2
    2^{-2(Lk - j)}.
  \end{aligned}
  \]
  By increasing $L$, we can therefore bound this by
  \[
    \sum_{j={L(k-1)}}^{Lk-1}
    \regs_j^2
    2^{-2(Lk - j)}
    \leq
    c \eta^2 \frks_k^2
  \]
  for some absolute constant $c >0$ by making $L_0$ sufficiently large, which then leads to 
  \[
    \Var\Big(D\big((\psi_1,t_1),(\psi_2,t_2)\big)\Big)
    \leq C(|\psi_1-\psi_2| + |t_1-t_2|)\eta^2 \frks_k^2.
  \]
  As this bound holds on a compact of $(\psi_i,t_i)$ of diameter depending only on $\alpha$, by standard Gaussian tail bounds, the supremum is controlled (see for example \cite[Theorem 8.5.5]{Vershynin}),
  i.e.\ there is a constant $C_\alpha > 0$ such that
  \[
    	\|
	\Delta_k(\theta,\alpha)
	\|_{\psi_2}
	\leq 
	C_\alpha
	\eta \frks_k.
  \]
\end{proof}

\subsection{Proof of the homing}

We now define a sequence of stopping times with respect to $\filt_k = \sigma( X_j : j \leq k)$
which are given by
\[
  T_j
  \coloneqq
  \inf\{ k \geq j : |u-\homp_{k}| < r_0 \frks_{k+1} \}
  \quad\text{for any $j \geq m$},
\]
where $r_0$ is as in Lemma \ref{lem:homing}.

Define for $k \geq m$ and any $\lambda \geq 0$ 
\[
  \begin{aligned}
  &\mart_k
  \coloneqq
  \exp\bigl(
  \lambda \bigl(
  |u-\homp_k|
  +
  L_{m,k}-D_{m,k}\bigr)
  -\lambda^2 V_{m,k}
  \bigr)
  \quad
  \text{where} 
  \quad\\
  &
  L_{m,k} =\frac{\delta}{2} \sum_{j=m+1}^k \frks_k,
  \quad
  D_{m,k} = C_D\sum_{j=m+1}^k (1-\chi_{k-1})\frks_k,
  \quad
  \text{and}\quad\\
  &V_{m,k} = C_H\sum_{j=m+1}^k \frks^2_k,
  \end{aligned}
\]
where $\chi_k$ is defined in \eqref{eq:homing}, and we set $L_{m,m}=D_{m,m} = V_{m,m}=0$, with $C_D$ and $C_H$ to be determined.
We also introduce an event for a $\gamma$ to be determined
\[
  %\mathcal{L}_{m,n}
  \mathcal{L}_{m,n}
  =\{ 2D_{j,n} \leq L_{j,n} : m \leq j \leq n-n^{\gamma/4}\}.
\]

\begin{lemma}\label{lem:super}
  Suppose $\sqrt{\eta} \leq \delta/2$.
  For any $j \geq m$, 
  if $|u-\homp_{j}| \geq r_0 \frks_j$
  then the process
  \(
  \mart_{k \wedge T_j}
  \)
  is a $\Pr(\cdot \mid \filt_j)$--supermartingale 
  for $k \geq j$.
  %with respect to $\filt_k = \sigma( X_\ell : \ell \leq k)$
\end{lemma}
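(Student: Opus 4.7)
The plan is to reduce the supermartingale claim to a conditional MGF bound on the increment of $|u - \homp_k|$, then verify it case-by-case based on $\chi_k$. Since the stopped process is constant on $\{k \geq T_j\}$, the supermartingale inequality is trivial there, and I only need $\E[\mart_{k+1}\mid\filt_k] \leq \mart_k$ on $\{k < T_j\}$, i.e.\ when $|u - \homp_k| \geq r_0 \frks_{k+1}$ (noting the hypothesis $|u-\homp_j|\geq r_0\frks_j$ together with $\frks_j\geq \frks_{j+1}$ gives $T_j>j$). Forming the ratio $\mart_{k+1}/\mart_k$, this reduces to
\[
  \E\bigl[\exp(\lambda(|u - \homp_{k+1}| - |u - \homp_k|)) \mid \filt_k \bigr]
  \leq
  \exp\bigl( -\tfrac{\lambda\delta}{2}\frks_{k+1} + \lambda C_D(1-\chi_k)\frks_{k+1} + \lambda^2 C_H\frks_{k+1}^2 \bigr).
\]

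Next, I decompose $\homp_{k+1} = \homp_k + \Delta_k + X_{k+1}(\A_{k+1})$, where $\Delta_k \coloneqq F_{m,k}(\A_{k+1}) - F_{m,k}(\A_k)$. By construction $\Delta_k = 0$ on $\{\chi_k = 0\}$ (because then $\A_{k+1} = \A_k$), while on $\{\chi_k = 1\}$ the defining indicator of $\chi_k$ gives $|\Delta_k| \leq \sqrt{\eta}\frks_{k+1}$. The triangle inequality then yields $|u - \homp_{k+1}| \leq |X_{k+1}(\A_{k+1}) - (u - \homp_k)| + |\Delta_k|$, isolating the Gaussian contribution from a small deterministic error.

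In the homing case $\chi_k = 1$, the pair $(X_{k+1}(\A_k), X_{k+1}(\A_k + \mathfrak{t}_{k+1}))/\frks_{k+1}$ is jointly standard complex normal with correlation $\rho_{k+1}(\mathfrak{t}_{k+1})$, whose modulus is below $\rho_H$ by the choice of $\mathfrak{t}_{k+1}$ provided by Lemma \ref{lem:homingdecoupling} for $L$ sufficiently large. The greedy rule in the definition of $\A_{k+1}$ aligns with the $Z_*$ selection of Lemma \ref{lem:homing} applied with target $u' \coloneqq (u - \homp_k)/\frks_{k+1}$, and $k < T_j$ ensures $|u'| \geq r_0 > r_0/2$. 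That lemma gives
\[
  \E\bigl[\exp(\lambda |X_{k+1}(\A_{k+1}) - (u - \homp_k)|) \mid \filt_k \bigr]
  \leq
  \exp\bigl( \lambda(|u - \homp_k| - \delta\frks_{k+1}) + C_H \lambda^2 \frks_{k+1}^2 \bigr),
\]
and combining with $|\Delta_k| \leq \sqrt{\eta}\frks_{k+1} \leq (\delta/2)\frks_{k+1}$ yields a net drift of at most $-\delta/2$ in $|u - \homp_k|$, which matches the target inequality (with $C_D(1-\chi_k) = 0$).

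In the no-homing case $\chi_k = 0$, $X_{k+1}(\A_{k+1}) = X_{k+1}(\A_k)$ is a single centered complex Gaussian of variance $\frks_{k+1}^2$; its modulus is sub-gaussian with norm on the order of $\frks_{k+1}$, so $\E[\exp(\lambda|X_{k+1}(\A_k)|)] \leq \exp(c_1\lambda\frks_{k+1} + c_2\lambda^2\frks_{k+1}^2)$ for absolute constants $c_1, c_2$. Since the triangle inequality gives $|u - \homp_{k+1}| - |u - \homp_k| \leq |X_{k+1}(\A_k)|$, choosing $C_D \geq c_1 + \delta/2$ and $C_H \geq c_2$ closes the case. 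The main obstacle is bookkeeping: the absolute constants $\delta, r_0, \eta, C_D, C_H$ and the block size $L$ must be fixed uniformly in $k$ so that both cases are simultaneously accommodated, and the greedy argmax in the definition of $\A_{k+1}$ must be identified with the maximizer $Z_*$ of Lemma \ref{lem:homing}; the hypothesis $\sqrt{\eta} \leq \delta/2$ is exactly the slack needed for the homing drift to survive the perturbation $\Delta_k$.
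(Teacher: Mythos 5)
Your proof is correct and follows essentially the same route as the paper: reduce the supermartingale claim to a conditional MGF bound on the increment of $|u-\homp_k|$ on $\{k<T_j\}$, apply Lemma \ref{lem:homing} (rescaled by $\frks_{k+1}$, with the greedy argmax playing the role of $Z_*$ and $|\rho_{k+1}(\mathfrak{t}_{k+1})|<\rho_H$ from Lemma \ref{lem:homingdecoupling}) in the case $\chi_k=1$, absorbing the perturbation $|F_{m,k}(\A_{k+1})-F_{m,k}(\A_k)|\leq\sqrt{\eta}\frks_{k+1}\leq\tfrac{\delta}{2}\frks_{k+1}$, and use sub-gaussianity of $|X_{k+1}(\A_k)|$ with $C_D,C_H$ enlarged in the case $\chi_k=0$. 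This matches the paper's argument step for step.
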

\begin{proof}
  Provided $k < T_j$ we have $|\homp_k - u| \geq r_0 \frks_{k+1}$. Hence we can use Lemma \ref{lem:homing} and \eqref{eq:homing}
  for $k \geq j$ to conclude if $\chi_k=1$
  \begin{equation*}
    \begin{aligned}
    &\Exp
    \left(
    \exp\bigl(
    \lambda
    |X_{k+1}(\A_{k+1}) + \homp_k - u|
    +\lambda \delta \frks_{k+1}
    - C_H \lambda^2 \frks_{k+1}^2
    \bigr)
    ~\middle\vert~ \filt_k
    \right)
    \\
    &\leq\Exp
    \left(
    \exp\bigl(
    \lambda \frks_{k+1}
    \bigl( \frks_{k+1}^{-1}|X_{k+1}(\A_{k+1}) + \homp_k - u|
    +\delta \bigr) 
    - C_H \lambda^2 \frks_{k+1}^2
    \bigr)
    ~\middle\vert~ \filt_k
    \right) \\
    &\leq \exp( \lambda|\homp_k - u|).
  \end{aligned}
  \end{equation*}
  Furthermore, since $\chi_k=1$
  \begin{align*}
   |\homp_{k+1}-u|
   &\leq 
   |X_{k+1}(\A_{k+1}) + \homp_k - u|
   +|F_{m,k}(\A_{k+1})-\homp_k|\\
   &\leq
   |X_{k+1}(\A_{k+1}) + \homp_k - u|+\sqrt{\eta}\frks_{k+1}.
  \end{align*}
  Hence provided $\sqrt{\eta} \leq \delta/2$ and $\chi_k = 1$
  \[
    \Exp
    \left(
    \exp\bigl(
    \lambda
    |\homp_{k+1}-u|
    +\tfrac12\lambda \delta \frks_{k+1}
    - C_H \lambda^2 \frks_{k+1}^2
    \bigr)
    ~\middle\vert~ \filt_k
    \right)
    \leq  \exp( \lambda|\homp_k - u|).
  \]
  If $\chi_k = 0$ then $\A_{k+1} = \A_k$ and so
  \[
    |\homp_{k+1}-u| 
    \leq
    |\homp_{k}-u| + |X_{k+1}(\A_k)|.
  \]
  Thus we have when $\chi_k=0$, possibly changing $C_H$
  \[
    \Exp
    \left(
    \exp\bigl(
    \lambda
    |\homp_{k+1}-u|
    - C_D\lambda \frks_{k+1}
    - C_H \lambda^2 \frks_{k+1}^2
    \bigr)
    ~\middle\vert~ \filt_k
    \right)
    \leq  \exp( \lambda|\homp_k - u|).
  \]
  Thus, in all cases, we can write
  \begin{multline*}
      \Exp
      \left(
      \exp\bigl(
      \lambda
      |\homp_{k+1}-u|
      +(\tfrac12\delta\chi_k- C_D(1-\chi_k))\lambda \frks_{k+1}
      - C_H \lambda^2 \frks_{k+1}^2
      \bigr)
      ~\middle\vert~ \filt_k
      \right)\\
      \leq  \exp( \lambda|\homp_k - u|).
  \end{multline*}
  
  Increasing $C_D$ to absorb the term $\tfrac12 \delta \chi_k$,
  this implies
  $\Exp( \mart_{k+1 \wedge T_j} \mid \filt_k) \leq \mart_{k \wedge T_j},$ completing the proof.
\end{proof}

%For an $n \in \N$ we let $T_*(n)$ be the first time $k$ after $n$ that $|u-\homp_k| \leq \frks_k^{1-\eta}$ for a positive $\eta>0$ which we will choose later.   
Our goal is to show that at time $n$, the event $|u-\homp_n| \leq \frks_n^{1-\gamma}$ is likely.

\begin{lemma}
  For $\beta \in (\frac12, 1],$
  there exist constants (depending only on $\beta$) $n_0 > 0$ and $a \in (0,1)$ such that for any $n \geq n_0$ the sequence $L_{j,n}^2/V_{j,n}$ is monotone increasing in $j \in [1,an]$.
  \label{lem:Gammamonotonicity}
\end{lemma}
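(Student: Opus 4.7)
My plan is first to reduce the monotonicity of $R_j \coloneqq L_{j,n}^2/V_{j,n}$ to a one-step sufficient inequality, and then to verify that inequality using Karamata's theorem together with Potter's bounds for slowly varying functions. Since the explicit constants $\delta/2$ and $C_H$ in the definitions of $L_{j,n}$ and $V_{j,n}$ factor out of $R_j$, it is equivalent to show monotonicity of $\widetilde L_{j,n}^2/\widetilde V_{j,n}$, where $\widetilde L_{j,n} \coloneqq \sum_{k=j+1}^n \frks_k$ and $\widetilde V_{j,n} \coloneqq \sum_{k=j+1}^n \frks_k^2$. Writing $L = \widetilde L_{j,n}$, $V = \widetilde V_{j,n}$, $s = \frks_{j+1}$, so that $\widetilde L_{j+1,n} = L - s$ and $\widetilde V_{j+1,n} = V - s^2$, the inequality $\widetilde L_{j+1,n}^2/\widetilde V_{j+1,n} \geq \widetilde L_{j,n}^2/\widetilde V_{j,n}$ clears (after $V(L-s)^2 \geq L^2(V-s^2)$) to $s(V + L^2) \geq 2VL$, which is in turn implied by the simpler sufficient condition
\[
  \frks_{j+1} \,\widetilde L_{j,n} \geq 2\widetilde V_{j,n},
\]
since then $L(sL - 2V) \geq 0 \geq -sV$ yields $sL^2 - 2VL + sV \geq 0$. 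It suffices to verify this inequality for all $j \in [1, an]$ and $n \geq n_0$.

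Using the asymptotic $\frks_k \asymp k^{-\beta}\omega(k)$ from \eqref{eq:sj-def} and Karamata's theorem on regularly varying sequences, I would obtain, for $\beta \in (1/2, 1)$, $\widetilde L_{j,n} \asymp (n^{1-\beta}\omega(n) - j^{1-\beta}\omega(j))/(1-\beta)$ and $\widetilde V_{j,n} \asymp j^{1-2\beta}\omega^2(j)/(2\beta-1)$ (the latter bounded by the convergent tail $\widetilde V_{j,\infty}$, since $\beta > 1/2$). The sufficient condition then reduces, up to constants depending only on $\beta$, to $(n/j)^{1-\beta}\omega(n)/\omega(j) \geq C_\beta$ for some $C_\beta$. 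Invoking Potter's bound $\omega(n)/\omega(j) \geq (1-\epsilon)(n/j)^{-\epsilon}$ (valid for $j, n$ beyond an $\omega$- and $\epsilon$-dependent threshold and any $\epsilon > 0$), the left-hand side is bounded below by $(1-\epsilon)(1/a)^{1-\beta-\epsilon}$ for $j \leq an$, which can be made arbitrarily large by choosing $\epsilon$ small and $a = a(\beta)$ small (concretely of order $((2\beta-1)/(2(1-\beta)))^{1/(1-\beta)}$). For $\beta = 1$, the analogous calculation gives $\widetilde L_{j,n} \asymp F(n) - F(j)$ with $F(x) \coloneqq \int_1^x \omega(t)/t\, dt$ and $\widetilde V_{j,n} \asymp j^{-1}\omega^2(j)$, so the condition becomes $(F(n) - F(j))/\omega(j) \geq 2$; integrating the Potter inequality $\omega(t)/\omega(j) \geq (1-\epsilon)(t/j)^{-\epsilon}$ over $t \in [j,n]$ yields
\[
  \frac{F(n) - F(j)}{\omega(j)} \geq \frac{(1-\epsilon)(1 - a^\epsilon)}{\epsilon},
\]
which can be made $\geq 2$ for $\epsilon$ and $a$ small.

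The main obstacle I anticipate is the small-$j$ regime, namely $j$ bounded independently of $n$, where Potter's bound does not yet apply and the ratio $\omega(n)/\omega(j)$ is not controlled. For such $j$, however, $\frks_{j+1}$ is bounded below by a positive constant, $\widetilde V_{j,n} \leq \widetilde V_{0,\infty} < \infty$ (finite because $\beta > 1/2$), and $\widetilde L_{j,n}$ diverges with $n$ at rate $n^{1-\beta}\omega(n)$ when $\beta < 1$ or $F(n)$ when $\beta = 1$, so the required inequality holds for $n$ beyond a suitable threshold. Choosing $n_0$ large enough to handle both the small-$j$ range and the Potter regime simultaneously, and $a = a(\beta) \in (0,1)$ as above, yields the monotonicity of $L_{j,n}^2/V_{j,n}$ throughout $j \in [1, an]$.
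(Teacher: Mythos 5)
Your proposal is correct and follows essentially the same route as the paper: reduce monotonicity of $L_{j,n}^2/V_{j,n}$ to the one-step sufficient inequality $\frks_{j+1}L_{j,n}\geq 2V_{j,n}$, verify it for $j$ in the Potter/Karamata regime using the regular variation of $\frks_k\asymp k^{-\beta}\omega(k)$ (splitting $\beta\in(\tfrac12,1)$ from $\beta=1$), and handle bounded $j$ by taking $n_0$ large so that $L_{j,n}$ diverges while $V_{j,n}$ stays bounded. The only minor difference is that you estimate $L_{j,n}$ via Karamata at the endpoint $n$ together with Potter's bound, whereas the paper lower-bounds it by the truncated sum $L_{j,\intpart{j/a}}$ evaluated at scale $j$; both are the same regular-variation computation.
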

\begin{proof}
  We can write
  \[
    \frac{L_{j+1,n}^2}{V_{j+1,n}}
    =
    \frac{(L_{j,n} - \frks_{j+1})^2}{V_{j,n}-\frks_{j+1}}.
  \]
  This being larger than $L_{j,n}^2/V_{j,n}$ is equivalent to the inequality
  \[
    \frks_{j+1}( V_{j,n} + L_{j,n}^2) \geq 2 L_{j,n} V_{j,n}.
  \]
  Thus it suffices to check that 
  \begin{equation}\label{eq:slv}
    \frks_{j+1} L_{j,n} \geq 2 V_{j,n}, \qquad \text{for }\,j \in [1,an].
  \end{equation}

  As we have $\frks_k \sim c k^{-\beta}\omega(k)$
  we can lower bound
  \[
     L_{j,n}
     \geq
     L_{j,\intpart{j/a}}
     \sim c\sum_{k=j}^{\intpart{j/a}} \frac{\omega(k)}{k^\beta} 
     \sim 
     \begin{cases}
     c^\prime(b) a^{\beta - 1} j^{1-\beta} \omega(j), & \beta \in (\tfrac12, 1); \\
     c \log(1/a) \omega(j), & \beta = 1.
     \end{cases}
  \]
  On the other hand
  \[
    V_{j,n} \leq \sum_{k=j+1}^\infty \frks_k^2 \sim 
    \begin{cases}
    c^{\prime\prime}(b) j^{1-2\beta} \omega^2(j), & \beta \in (\tfrac12, 1); \\
    c j^{-1}\omega^2(j), & \beta = 1.
    \end{cases}
    % \sim \frks_{j+1} \omega(j).
  \]
  Hence, for $\beta = 1$, if $c\log(1/a) > 2$ and $j \geq j_0$ for some $j_0$ sufficiently large, then \eqref{eq:slv} holds, and similarly for $\beta \in (1/2, 1)$, if $a > 0$ is sufficiently small depending on $\beta$.
  On the other hand, for $j \leq j_0$ by choosing $n_0$ sufficiently large, we ensure \eqref{eq:slv} holds since $L_{j_0, n_0}$ diverges while the other terms are bounded.
\end{proof}

We will need the following estimates on $L$ and $V$ going forward.
\begin{equation}
  V_{j,n} 
  \asymp
  \begin{cases}
    j^{1-2\beta}\omega^2(j),
    &\text{if } j < a n ; \\
    (n-j)n^{-2\beta} \omega^2(n),
    &\text{if } j \geq a n. \\
  \end{cases}
  \label{eq:varvj}
\end{equation}
As for the sum of standard deviations
\begin{equation}
  L_{j,n}
  \asymp
  (n-j)n^{-\beta} \omega(n)
  \quad\text{if}\quad j \geq a n.
    % + \bigl(n^{-\beta}\omega(n))^{1-\gamma},
  %+ \frks_n^{1-\gamma}-C\frks_{j-1}
  %\gtrsim
 % \begin{cases}
%    L_{j,n}, 
%    &\text{if } j < a n; \\
  %  &\text{if } j \geq a n. \\
  %\end{cases}
  \label{eq:stdlj}
\end{equation}

\begin{lemma}\label{lem:homingcenter}
  For any $\gamma \in (0,\tfrac14)$
  there is a $c>0$ so that for all $n\geq m/a$ (with $a$ the constant from Lemma \ref{lem:Gammamonotonicity}) such that
  $L_{m,n} \geq \max\{4|u|,\sqrt{V_{m,n}}\}$
  \[
    \Pr( \{
      |\homp_n-u| > \frks_n^{1-\gamma}
    \}
    \cap \mathcal{L}_{m,n}
    ) \leq \exp(-c \min\{ L_{m,n}^2/V_{m,n}, n^{\gamma/4}\}).
  \]
\end{lemma}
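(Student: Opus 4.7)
The plan is to apply the exponential supermartingale $\mart_k$ from Lemma~\ref{lem:super} together with optional stopping, decomposed according to the \emph{last exit time}
\[
  \sigma \Def \sup\{j \in [m, n-1] : |u - \homp_j| < r_0 \frks_{j+1}\}
\]
(with $\sigma = m-1$ if the set is empty) from the ``close'' regime in which the homing drift of Lemma~\ref{lem:homing} is not guaranteed. After $\sigma$, by construction $|u-\homp_k| \geq r_0 \frks_{k+1}$ for every $k \in (\sigma, n]$, so the supermartingale can be restarted at time $\sigma + 1$, up to a harmless constant adjustment of $r_0$ to absorb the small discrepancy between $\frks_{j+1}$ and $\frks_{j+2}$.

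On the base event $\{\sigma = m - 1\}$, optional stopping at $n$ gives $\Exp \mart_n \leq e^{\lambda|u|}$; intersecting with $\mathcal{L}_{m,n} \cap \{|\homp_n - u| > \frks_n^{1-\gamma}\}$ forces $\mart_n \geq \exp(\lambda L_{m,n}/2 - \lambda^2 V_{m,n})$ via $D_{m,n} \leq L_{m,n}/2$, and Markov's inequality at $\lambda = c L_{m,n}/V_{m,n}$ (using $L_{m,n} \geq 4|u|$ to absorb $\lambda|u|$ into $\lambda L_{m,n}/4$) yields the bound $\exp(-c L_{m,n}^2/V_{m,n})$. For $\sigma = j \geq m$, the analogous restart from $j+1$ requires an initial-value bound: by the triangle inequality $|u-\homp_{j+1}| \leq r_0 \frks_{j+1} + |X_{j+1}(\A_{j+1})| + \sqrt{\eta}\frks_{j+1}$, and the sub-Gaussian tail of $|X_{j+1}(\A_{j+1})|$ (of scale $\frks_{j+1}$) bounds its MGF by $\exp(C\lambda\frks_{j+1} + C\lambda^2\frks_{j+1}^2)$. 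The same Markov argument then produces $\exp(-c L_{j+1,n}^2/V_{j+1,n})$, provided $j+1 \leq n - n^{\gamma/4}$ so that $\mathcal{L}_{m,n}$ supplies $D_{j+1,n} \leq L_{j+1,n}/2$. By Lemma~\ref{lem:Gammamonotonicity} one has $L_{j+1,n}^2/V_{j+1,n} \geq L_{m,n}^2/V_{m,n}$ for $j \leq an$, while \eqref{eq:varvj}--\eqref{eq:stdlj} give $L_{j+1,n}^2/V_{j+1,n} \asymp n - j \geq n^{\gamma/4}$ for $an \leq j \leq n - n^{\gamma/4}$, so in either subrange the contribution is at most $\exp(-c\min\{L_{m,n}^2/V_{m,n}, n^{\gamma/4}\})$.

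For the remaining $j \in (n - n^{\gamma/4}, n-1]$, where the supermartingale bound degrades because $\mathcal{L}_{m,n}$ no longer controls $D_{j+1,n}$, I instead apply a direct Chernoff estimate to the total displacement $|\homp_n - \homp_{j+1}| \leq \sum_{k=j+2}^n (|X_k(\A_k)| + \sqrt{\eta}\frks_k)$, whose mean is $O(L_{j+1,n})$ and variance $O(V_{j+1,n}) \lesssim n^{\gamma/4}\frks_n^2$. Since $\beta \geq 1/2$ and $\gamma < 1/4$ give $\frks_n^{2(1-\gamma)} \gg n^{\gamma/4}\frks_n^2$, the probability that this displacement exceeds $\frks_n^{1-\gamma}$ is at most $\exp(-c n^{\gamma/4})$. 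Summing all contributions over $j \in [m,n-1]$ loses only a polynomial-in-$n$ factor, which is absorbed by adjusting $c$ (since the claim is vacuous when the minimum in the exponent is $O(\log n)$), yielding the stated bound. The main technical obstacle is the restart step: maintaining a well-controlled sub-Gaussian initial value for the restarted supermartingale at the random time $\sigma + 1$ without losing the drift structure encoded in $L_{j+1,n}$.
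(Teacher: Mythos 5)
Your overall route is the paper's: the exponential supermartingale of Lemma~\ref{lem:super} with optional stopping, a decomposition over the last exit time from the region $\{|u-\homp_j| < r_0\frks_{j+1}\}$, a sub-Gaussian bound on the one-step overshoot at the restart, the event $\mathcal{L}_{m,n}$ to supply $D_{j,n}\leq L_{j,n}/2$, and then \eqref{eq:varvj}--\eqref{eq:stdlj} together with Lemma~\ref{lem:Gammamonotonicity} to estimate the exponents. Your handling of the final window $j > n-n^{\gamma/4}$ by a direct Chernoff bound on the displacement $\sum_{k>j}\bigl(|X_k(\A_k)|+\sqrt{\eta}\,\frks_k\bigr)$ is a legitimate variant of the paper's device (the paper instead keeps the $\frks_n^{1-\gamma}$ term inside the supermartingale bound and absorbs the uncontrolled drift terms using $L_{j_0,n}\leq \frks_n^{1-\gamma+\epsilon}$ with $j_0=n-n^{\gamma/4}$), and your arithmetic there, $\frks_n^{2(1-\gamma)}/(n^{\gamma/4}\frks_n^2)\gtrsim n^{\gamma/4}$ for $\beta\geq\tfrac12$, is correct.

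The genuine gap is in the summation over $m\leq j\leq an$. There you bound each contribution by $\exp(-c\,L_{m,n}^2/V_{m,n})$ via Lemma~\ref{lem:Gammamonotonicity} and then multiply by the number of indices, claiming the resulting polynomial-in-$n$ factor ``is absorbed by adjusting $c$ since the claim is vacuous when the minimum in the exponent is $O(\log n)$.'' That justification fails: under the hypothesis $L_{m,n}\geq\sqrt{V_{m,n}}$ the claim is never vacuous, and for $\beta=1$ there are admissible choices (e.g.\ $\omega(x)=(\log x)^{-1/2}(\log\log x)^{-1}$, which keeps $\sum_k\frks_k^2<\infty$ and $\sum_k\frks_k=\infty$) and small $m$ for which $L_{m,n}^2/V_{m,n}\asymp \log n/(\log\log n)^2$; this tends to infinity, so the asserted bound $\exp(-c\,L_{m,n}^2/V_{m,n})$ is a meaningful statement, yet $n\exp(-c\,L_{m,n}^2/V_{m,n})\to\infty$ and your union bound proves nothing. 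This regime is exactly the one in which the lemma is used, via Proposition~\ref{prop:homing} over the blocks $0$ to $m^{j_0}$, in Proposition~\ref{prop:homingnet}. The paper avoids any factor of $n$ here by a further case analysis: when $\beta<1-\gamma/4$ the uniform bound $L_{j,n}\gtrsim n^{1-\beta}\omega(n)$ makes the exponent a power of $n$, so a polynomial prefactor is harmless; when $\beta\geq 1-\gamma/4$ it splits the range at $\Gamma_{m,n}\coloneqq L_{m,n}^2/V_{m,n}$, using $L_{j,n}^2/V_{j,n}\gtrsim j$ to get geometrically decaying terms for $j\geq\Gamma_{m,n}$ and applying the monotonicity bound only to the at most $\Gamma_{m,n}$ indices below it, so that the prefactor is $\Gamma_{m,n}$ rather than $n$. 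You need this refinement (or an equivalent quantitative decay in $j$) to close the argument; the rest of your proposal matches the paper.
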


\begin{proof}
  We start by bounding the probability that $T_m > n$.
  Using optional stopping and Lemma \ref{lem:super},
  \[
    \Exp[ 
      \one[ T_m > n ]
      \one[ \mathcal{L}_{m,n}]
      \mart_{n \wedge T_m}
    ]
    \leq
    \Exp[ 
      \mart_{n \wedge T_m}
    ]
    \leq \mart_m.
  \]
  On the other hand if $T_m > n$
  then on $\mathcal{L}_{m,n}$
  it follows that
  \(
  \mart_{n \wedge T_m}
  \geq e^{\lambda L_{m,n}/2-\lambda^2 V_{m,n}},
  \)
  and therefore
  \[
    \Pr(\{T_m > n\}\cap \mathcal{L}_{m,n}) 
    \leq \mart_m e^{-\lambda L_{m,n}/2 + \lambda^2 V_{m,n}}
    = e^{\lambda( |u| - L_{m,n}/2) + \lambda^2 V_{m,n}}.
  \]
  Thus as $L_{m,n} \geq 4|u|$, optimizing in taking $\lambda = L_{m,n}/(8V_{m,n})$ results in
  \[
    \Pr(\{T_m > n\} \cap \mathcal{L}_{m,n})
    \leq \exp( -L_{m,n}^2/(128 V_{m,n})).
  \]

  On the event $T_m \leq n$ there is a last time $m \leq J \leq n$ at which $|\homp_J-u| < r_0 \frks_{J+1}$. If $J=n$ then $|\homp_n-u| \leq \frks_n^{1-\gamma}$ (for all $n$ sufficiently large with respect to $r_0$).  Thus we have
%  \[
%    \Pr( \{|\homp_n-u| > \frks_n^{1-\gamma}\} \cap \{T_0 \leq n\})
%    \leq
%    \sum_{j=1}^{n}
%    \Pr( 
%    \{|\homp_{j-1}-u| < r_0 \frks_{j-1}\}
%    \cap
%    \{T_j > n\}
%%    \{|\homp_{j}-u| \geq r_0 \frks_{j}\}
%    \cap
%    \{|\homp_n-u| > \frks_n^{1-\gamma}\}).
%  \]
  \[
    \begin{aligned}
    &\{|\homp_n-u| > \frks_n^{1-\gamma}\} \cap \{T_m \leq n\}
    \subseteq
    \bigcup_{j=m+1}^{n}
    E_j,
    \quad\text{where},\\
    &E_j \coloneqq
    \{|\homp_{j-1}-u| < r_0 \frks_{j-1}\}
    \cap
    \{T_j > n\}
    \cap
    \{|\homp_n-u| > \frks_n^{1-\gamma}\}.
    \end{aligned}
  \]
Once more using optional stopping and Lemma \ref{lem:super},
on the event $\{|\homp_{j}-u| \geq r_0 \frks_{j}\}$

  \begin{multline*}
    \Exp[ 
      \one[ E_j ]
      \one[ \mathcal{L}_{m,n}]
      \mart_{n \wedge T_j}
      \mid \filt_j
    ]\\
    \leq
    \Exp[ 
      \mart_{j \wedge T_j}
      \mid \filt_j
    ]
    =
    \exp\bigl(
    \lambda \bigl(
    |\homp_j-u|
    +
    L_{m,j}-D_{m,j}\bigr)
    -\lambda^2 V_{m,j}
    \bigr).
  \end{multline*}
  Multiplying by the indicator of $\{|\homp_{j-1}-u| < r_0 \frks_j\}$ and taking conditional expectation, we conclude
  \begin{multline*}
    \Exp[ 
      \one[ E_j ]
      \one[ \mathcal{L}_{m,n}]
      \mart_{n \wedge T_j}
      \mid \filt_{j-1}
    ]\\
    \leq
    \Exp
    [
    \exp\bigl(
    \lambda \bigl(
    |\homp_j-\homp_{j-1}|
    +r_0\frks_j
    +
    L_{m,j}
    -D_{m,j}
    \bigr)
    -\lambda^2 V_{m,j}
    \bigr)
  \mid \filt_{j-1}].
  \end{multline*}
  As $|\homp_j-\homp_{j-1}|$ is bounded (in law) by the sum $\frks_{j}(|Z_1|+|Z_2|+\tfrac12\delta\chi_{j-1})$ for two standard complex normals (which need not be independent), we conclude for some absolute constant $C>0$
  \[
    \Exp[
      \exp\bigl(
      \lambda
      |\homp_j-\homp_{j-1}|
      \bigr)
    \mid \filt_{j-1}]
    \leq \exp( C(\lambda\frks_j+\lambda^2\frks_j^2)).
  \]
  On the other hand on the event $E_j \cap \mathcal{L}_{m,n}$ for $j \leq n-n^{\gamma/4}$
  \[
    \begin{aligned}
    \mart_{n \wedge T_j}
    &\geq
    \exp\bigl(
    \lambda \frks_n^{1-\gamma} + \lambda (L_{m,n}-D_{m,n}) - \lambda^2 V_{m,n}
    \bigr) \\
    &\geq
    \exp\bigl(
    \lambda \frks_n^{1-\gamma} + \lambda (L_{m,j}-D_{m,j}) + \tfrac12\lambda L_{j,n} - \lambda^2 V_{m,n}
    \bigr). \\
    \end{aligned}
  \]
  Putting everything together, we have for any $\lambda \geq 0$ and $j \leq n-n^{\gamma/4}$
  \begin{equation}\label{eq:ej}
    \Pr( E_j \cap \mathcal{L}_{m,n})
    \leq
    \exp\bigl(
    -\lambda(L_{j,n}/2 + c\frks_n^{1-\gamma}\bigr)
    +\lambda C\frks_{j-1}
    +\lambda^2 C \frks_j^2
    +\lambda^2 V_{j,n}
    \bigr),
  \end{equation}
  with $c=1$. We shall show this also holds for $j \geq n-n^{\gamma/4}$ by decreasing $c$.
  If we set $j_0 = \intpart{n-n^{\gamma/4}}$, we can bound on the event $E_j$
  \[
    \mart_{n \wedge T_j}
    \geq
    \exp\bigl(
    \lambda \frks_n^{1-\gamma} + \lambda (L_{m,j}-D_{m,j} - C L_{j_0,n}) - \lambda^2 V_{m,n}
    \bigr).
  \]
  Using \eqref{eq:stdlj}, $L_{j_0,n} \leq n^{\gamma/4-\beta}\omega(n) \leq \frks_n^{1-\gamma+\epsilon}$ for any small $\epsilon >0$ and all $n$ sufficiently large, and so, by decreasing $c$, we can just absorb the $L_{j_0,n}$ term by the $c\frks_n^{1-\gamma}$ term in \eqref{eq:ej}.

  For the terms $j \geq a n$, this leads us to the bound (using \eqref{eq:varvj} and \eqref{eq:stdlj})
  \[
    \Pr(E_j \cap \mathcal{L}_{m,n})
    \leq
    \begin{cases}
    \exp( -c(n-j) ), 
    &\text{if } an \leq j < n-(n^{\beta}/\omega(n))^{\gamma}; \\
    \exp( -c(n^{\beta}/\omega(n))^{\gamma} ), 
    &\text{if } j \geq n-(n^{\beta}/\omega(n))^{\gamma}. \\
    \end{cases}
  \]
  Hence we have for $n$ large enough
  \[
    \sum_{j=\intpart{a n}}^n \Pr(E_j \cap \mathcal{L}_{m,n}) \lesssim \exp(-n^{\gamma/4}). 
  \]
  As for $m+1 \leq j \leq an$, once again we use \eqref{eq:ej} and note that all of the $\frks_k$ terms can be absorbed in the $L_{j,n}$ and $V_{j,n}$ terms, and hence optimizing in $\lambda,$ 
  \[
    \sum_{j=m+1}^{\intpart{a n}}\Pr(E_j \cap \mathcal{L}_{m,n}) 
    \leq \sum_{j=m+1}^{\intpart{a n}} \exp( -cL_{j,n}^2/V_{j,n}).
  \]
  We further break the argument into the cases of $\beta < 1 - \gamma/4$ and $\beta \ge 1 - \gamma/4$. 
  When $\beta < 1 - \gamma/4$, then uniformly over $m+1\leq j \leq an$
  \[
    L_{j,n} \gtrsim n^{1-\beta}\omega(n).
  \]
  As the variance $V_{j,n} \leq V_{0,\infty} < \infty,$ we conclude for some other $c'$
  \[
    \sum_{j=m+1}^{\intpart{a n}}\Pr(E_j \cap \mathcal{L}_{m,n}) \leq \exp( -c' n^{2(1-\beta)}\omega^2(n)) \leq \exp( -c n^{\gamma/4}).
  \]
  As for the case $\beta > 1 - \gamma/4,$ 
  we have
  \(
    V_{j,n} \asymp j^{1-2\beta}\omega^2(j).
  \)
  We further divide $j$ to $j < \Gamma_{m,n} \coloneqq L_{m,n}^2/V_{m,n}$ and $j \geq \Gamma_{m,n}$.
  For the terms $j$ with $j \geq \Gamma_{m,n}$, we use that
  \[
    L_{j,n} \geq L_{j,\intpart{j/a}} \gtrsim \sum_{k=j+1}^{\intpart{j/a}} \frac{\omega(k)}{k^\beta} \gtrsim j^{1-\beta} \omega(j).
  \]
  Hence for some $c' > 0$
  \[
    \sum_{j=\intpart{\Gamma_{m,n}}}^{\intpart{an}} \exp(-cL_{j,n}^2/V_{j,n}) 
    \leq
    \sum_{j=\intpart{\Gamma_{m,n}}}^{\intpart{an}} \exp(-c'j) \lesssim \exp(-c' \Gamma_{m,n}).
  \]
  For $m+1 \leq j \leq \min\{\Gamma_{m,n},an\}$, we have from Lemma \ref{lem:Gammamonotonicity} that 
  \[
    \sum_{j=m+1}^{\intpart{\min\{\Gamma_{m,n},an\}}}
    \exp(-cL_{j,n}^2/V_{j,n})
    \leq \Gamma_{m,n} \exp(-c \Gamma_{m,n})
    \leq \exp(-c' \Gamma_{m,n})
  \]
  for some smaller constant $c'$.
  Putting everything together, we have shown there is a constant $c>0$ so that
  \[
    \Pr(\{|\homp_n-u| > \frks_n^{1-\gamma}\} \cap \{T_m \leq n\} \cap \mathcal{L}_{m,n})
    \leq
    \exp(-c \min\{ \Gamma_{m,n}, n^{\gamma/4}\}).
  \]
  This completes the proof as $\Pr(\{T_m > n\} \cap \mathcal{L}_{m,n})$ is even smaller.
\end{proof}

We next need to bound the probability of the event $\mathcal{L}_{m,n}^c.$  
%To do this, we need the following version of Bernstein's inequality.
%\begin{lemma}
%  Suppose $\{X_j : 1\leq j \leq n\}$ are independent random variables with all $|X_j| \leq a$ almost surely.
%  Let $S = \sum_{j=1}^n X_j$, and let $V = \sum_{j=1}^n \Exp X_j^2.$
%  There is an absolute constant $c>0$ so that for all $t \geq 0$
%  \[
%    \Pr( |S- \Exp S| \geq t) \leq 2\exp(-c \min\{ t/a, t^2/V \}).
%  \]
%  \label{lem:Bernstein}
%\end{lemma}
%This leads directly to a bound on the probability of $\mathcal{L}_{m,n}^c$.
\begin{lemma}
  For any $\gamma \in (0,\tfrac 14)$ and for all $\eta > 0$ sufficiently small
  \[
  \Pr( \mathcal{L}_{m,n}^c)
  \leq
  \exp(-c \min\{ L_{m,n}^2/V_{m,n}, n^{\gamma/4}\}).
  \]
  \label{lem:unlucky}
\end{lemma}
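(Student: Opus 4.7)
My plan is to decompose $\mathcal{L}_{m,n}^c = \bigcup_{m \leq j \leq n - n^{\gamma/4}} \{2 D_{j,n} > L_{j,n}\}$, bound each piece by a Chernoff estimate exploiting the smallness of $\Pr(\chi_k = 0)$, and close with a union bound over $j$. Each event on the right says that the $\frks$-weighted proportion of indices $k\in(j,n]$ with $\chi_{k-1} = 0$ exceeds $\epsilon := \delta/(4C_D)$.

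The pointwise input is the bound $\Pr(\chi_k = 0 \mid \filt_{k-1}) \leq p_\eta$ with $p_\eta = C e^{-c/\eta}$, obtained as follows. The event $\chi_k = 0$ forces $|F_{m,k}(\A_k) - F_{m,k}(\A_k + \mathfrak{t}_{k+1})| > \sqrt{\eta}\,\frks_{k+1}$. A direct Fourier computation in the spirit of the oscillation bound in the proof of Lemma \ref{lem:homingdecoupling} gives, at every fixed $\theta$,
\[
\Var\bigl(F_{m,k}(\theta) - F_{m,k}(\theta + \mathfrak{t}_{k+1})\bigr)
= 4 \!\!\sum_{\ell=2^{Lm}}^{2^{Lk}-1}\!\! a_\ell^2 \sin^2(\pi \ell\, \mathfrak{t}_{k+1}) \,\lesssim\, L^{-1} \frks_k^2,
\]
using $|\mathfrak{t}_{k+1}| \leq 2^{-Lk}$ and the regular variation of $\regs_\ell$. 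Taking $L$ large enough that $L^{-1} \leq \eta^2$, the sub-gaussian tail produces probability $\leq e^{-c/\eta}$ at each fixed $\theta$. To upgrade this to the random angle $\A_k$, I would use the deterministic inclusion $\A_k \in \mathcal{W}_m(\A_m, 4)$ and bound the supremum of $\theta \mapsto F_{m,k}(\theta) - F_{m,k}(\theta + \mathfrak{t}_{k+1})$ over this window via Lemma \ref{lem:GAFtail} (or via a chaining argument using an extension of Lemma \ref{lem:homingdecoupling}(2) from single-block to multi-block H\"older constants). The resulting polynomial-in-$L$ prefactor is absorbed into $e^{-c/\eta}$ by shrinking $\eta$ further.

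With the pointwise bound in hand, iterating
\[
\Exp\bigl[\exp(\lambda (1-\chi_{k-1}) \frks_k) \,\big|\, \filt_{k-1}\bigr] \leq \exp\bigl(p_\eta (e^{\lambda \frks_k} - 1)\bigr)
\]
yields the MGF bound $\exp\bigl(p_\eta \sum_{k>j}(e^{\lambda \frks_k}-1)\bigr)$. Choosing $\lambda$ on the order of $\log(1/p_\eta)/\frks_{j+1}$ and applying Chernoff produces
\[
\Pr(2 D_{j,n} > L_{j,n}) \leq \exp\bigl(-c\,\eta^{-1}\min\{L_{j,n}^2/V_{j,n},\ L_{j,n}/\frks_{j+1}\}\bigr).
\]
By Lemma \ref{lem:Gammamonotonicity}, the first ratio dominates $L_{m,n}^2/V_{m,n}$ for $j \leq an$, while monotonicity of $\frks_k$ implies $L_{j,n}/\frks_{j+1} \gtrsim n-j \geq n^{\gamma/4}$ for $j \leq n - n^{\gamma/4}$. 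Hence each term is bounded by $\exp(-c \min\{L_{m,n}^2/V_{m,n},\ n^{\gamma/4}\})$ up to constants, and a union bound over the $O(n)$ values of $j$ (absorbing the $n$-factor either by the $n^{\gamma/4}$ exponent or into the constant $c$ via the $\eta^{-1}$ saving) yields the claim.

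The main obstacle is the transfer of the variance bound from fixed $\theta$ to the random angle $\A_k$: because $\A_k$ is constructed from past randomness in $F_{m,k}$ via argmaxes, one cannot simply substitute a deterministic angle. The deterministic containment $\A_k \in \mathcal{W}_m(\A_m, 4)$ is precisely what makes this tractable, reducing the question to a Gaussian-supremum estimate on an $O(2^{-Lm})$-arc, which is handled either by Lemma \ref{lem:GAFtail} or by a chained version of Lemma \ref{lem:homingdecoupling}(2).
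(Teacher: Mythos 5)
Your reduction of $\mathcal{L}_{m,n}^c$ to the events $\{2D_{j,n}>L_{j,n}\}$ matches the paper, but the engine of your argument --- a uniform conditional bound $\Pr(\chi_k=0\mid\filt_{k-1})\le Ce^{-c/\eta}$ obtained by a fixed-angle variance computation plus a supremum bound over $\mathcal{W}_m(\A_m,4)$ --- does not go through. The difference process $\theta\mapsto F_{m,k}(\theta)-F_{m,k}(\theta+\mathfrak{t}_{k+1})$ has pointwise standard deviation of order $\frks_k/\sqrt{L}$ but decorrelates at angular scale $2^{-Lk}$, while the window $\mathcal{W}_m(\A_m,4)$ has width of order $2^{-Lm}$; it therefore contains roughly $2^{L(k-m)}$ essentially independent samples, so its typical supremum is of order $\frks_k\sqrt{k-m}$, which exceeds the threshold $\sqrt{\eta}\,\frks_{k+1}$ for every relevant $k$. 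Hence the probability that the supremum over that window exceeds the threshold is close to $1$, and neither Lemma~\ref{lem:GAFtail} (which bounds suprema at a scale far above $\sqrt{\eta}\,\frks_k$) nor a multi-block chaining over the large window can rescue the step: the "polynomial-in-$L$ prefactor" you propose to absorb is really an exponential-in-$L(k-m)$ loss. There is also a structural obstruction to the conditional Bernoulli picture itself: given $\filt_{k-1}$, the event $\chi_k=0$ involves the $\filt_{k-1}$-measurable quantity $F_{m,k-1}(\cdot)-F_{m,k-1}(\cdot+\mathfrak{t}_{k+1})$ as a conditional mean, which is not deterministically small, so no uniform bound $p_\eta$ is available without first controlling the oscillation of all earlier blocks at the random angles --- which is exactly the content of the H\"older-constant machinery you are trying to avoid. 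Finally, even granting the Bernoulli bound, your closing step contains a quantitative error: for $\beta=1$ and small $j$ one has $L_{j,n}/\frks_{j+1}\asymp\log n$ (up to slowly varying factors), not $\gtrsim n-j$, so the stated Chernoff exponent $\min\{L_{j,n}^2/V_{j,n},\,L_{j,n}/\frks_{j+1}\}$ degenerates to order $\log n$ and falls short of the required $\min\{L_{m,n}^2/V_{m,n},\,n^{\gamma/4}\}$, which in that regime is of order $(\log n)^2$.

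The paper's proof avoids estimating $\Pr(\chi_k=0)$ altogether. It uses the deterministic inequality $(1-\chi_{k-1})\frks_k\le\eta^{-1/2}\,|F_{m,k-1}(\A_{k-1})-F_{m,k-1}(\A_{k-1}+\mathfrak{t}_k)|$, expands $F_{m,k-1}$ into its blocks, and bounds each block's increment by its own H\"older constant $\Delta_\ell(\A_\ell,4)$ taken over the window at the block's \emph{own} scale $2^{-L\ell}$, picking up the geometric factor $2^{-L(k-\ell-1)/2}$ because the evaluation points are only $2^{-L(k-1)}$ apart. Since Lemma~\ref{lem:homingdecoupling} (via the choice of $L$) makes $\|\Delta_\ell(\cdot,4)\|_{\psi_2}\lesssim\eta\frks_\ell$, Lemma~\ref{lem:Dmnbound} yields $D_{j,n}\lesssim\sqrt{\eta}\,L_{j,n}+O_{\psi_2}(\eta\sqrt{V_{j,n}})$, so $\Pr(2D_{j,n}>L_{j,n})\le\exp(-cL_{j,n}^2/V_{j,n})$ once $\eta$ is small; summing over $j\le n-n^{\gamma/4}$ with the same $L_{j,n}^2/V_{j,n}$ estimates as in Lemma~\ref{lem:homingcenter} (using Lemma~\ref{lem:Gammamonotonicity}) gives the claim. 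If you want to salvage your route, the correct localization is to the two candidate angles inside the window of $\A_{k-1}$ at scale $2^{-L(k-1)}$, not to $\mathcal{W}_m(\A_m,4)$ --- but controlling the $\filt_{k-1}$-measurable past contribution then forces you back to exactly the per-block H\"older decomposition above.
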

\begin{proof}
  %We apply Bernstein's inequality to $D_{m,k}$.  
%  The random variables $\{\chi_{k}:k\}$ are independent Bernoulli variables from rotation invariance of the laws of the GAFs $X_k$, and from Lemma \ref{lem:homingdecoupling}
  We begin by bounding (using $1-2^{-L} \geq \tfrac 12$)
  \[
    \begin{aligned}
      D_{j,n}
      &\lesssim \sum_{k=j+1}^n (1-\chi_{k-1})\frks_k \\
      &\leq \frac{1}{\sqrt{\eta}}\sum_{k=j+1}^n |F_{m,k-1}(\A_{k-1}) -F_{m,{k-1}}(\A_{k-1}+\mathfrak{t}_{k})| \\ 
      &\leq \frac{1}{\sqrt{\eta}}
      \sum_{k=j+1}^n
      \sum_{\ell=m+1}^{k-1}
      |X_{\ell}(\A_{k-1}) -X_{\ell}(\A_{k-1}+\mathfrak{t}_{k})| \\
      &=
      \frac{1}{\sqrt{\eta}}
      \sum_{\ell=m+1}^{n}
      \sum_{k=(j\vee \ell)+1}^n
      |X_{\ell}(\A_{k-1}) -X_{\ell}(\A_{k-1}+\mathfrak{t}_{k})| \\
      &\leq
      \frac{2}{\sqrt{\eta}}
      \sum_{\ell=m+1}^{n}
      \sum_{k=(j\vee \ell)+1}^n
      \Delta_{\ell}(\A_{\ell},4) 2^{-L(k-\ell-1)/2} \\
      &\leq
      \frac{2}{\sqrt{\eta}}
      \biggl(
      \sum_{\ell=m+1}^{j}
      \Delta_{\ell}(\A_{\ell},4) 2^{-L(j-\ell)/2} 
      +
      \sum_{\ell=j+1}^{n}
      \Delta_{\ell}(\A_{\ell},4) 
      \biggr).
    \end{aligned}
  \]
Define
\[
  \mathfrak{D}_{m,n}^0(\alpha) \coloneqq \sum_{k=m+1}^n \Delta_k(\A_k,\alpha),\quad
  \mathfrak{D}_{m,n}^1(\beta,\alpha) \coloneqq \sum_{k=m+1}^n \Delta_k(\A_k,\alpha)2^{-L(n-k)/2}.
\]
Then we have shown
\[
      D_{j,n}
      \lesssim
      \frac{1}{\sqrt{\eta}}
      \Bigl(   \mathfrak{D}_{m,j}^1(4) +   \mathfrak{D}_{j,n}^0(4)  \Bigr).
\]
Using Lemma \ref{lem:Dmnbound} below, it follows that there is a constant $C>0$ so that for all $\eta >0$ and all $t>0$
\[
      \Pr(
      D_{j,n}
      \geq 
      C\sqrt{\eta}L_{j,n} + t)
      \leq \exp(-t^2/(C\eta^2 V_{j,n})).
\]
Hence for $\eta$ sufficiently small that $C\sqrt{\eta} \leq \tfrac 14$
\[
  \Pr( \mathcal{L}_{m,n}^c)
  \leq \sum_{j=m+1}^{n-n^{\gamma/4}} 
  \exp( -c L_{j,n}^2/V_{j,n} )
\]
Using the same bounds for this sum as in the proof of Lemma \ref{lem:homingcenter},
we conclude there is a constant $c>0$ such that
\[
  \Pr( \mathcal{L}_{m,n}^c)
  \leq
  \exp(-c \min\{ L_{m,n}^2/V_{m,n}, n^{\gamma/4}\}).
\]
\end{proof}

\begin{lemma}
  For any $\alpha \geq 1$ 
  there is a constant $C > 0$
  so that for all $\eta >0$ and
  for all $j \leq k$
  \[
    \|\mathfrak{D}_{j,k}^1(\alpha)\|_{\psi_2}
    \leq C\eta\frks_k
    \quad
    \text{and}
    \quad
    \|\mathfrak{D}_{j,k}^0(\alpha)- \Exp \mathfrak{D}_{j,k}^0(\alpha)\|^2_{\psi_2}
    \leq C\eta^2 V_{j,k}.
  \]
  Furthermore,
  \(
    \Exp \mathfrak{D}_{j,k}^0(\alpha)
    \leq C\eta L_{j,k}.
  \)
  \label{lem:Dmnbound}
\end{lemma}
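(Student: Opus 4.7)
The plan is to dominate each $\Delta_\ell(\A_\ell,\alpha)$ by a comparison variable whose law is decoupled from the past, reducing the problem to concentration for a sum of independent sub-gaussians. Concretely, since $\A_\ell-\A_{\ell-1}\in\{0,\mathfrak{t}_\ell\}$ with $|\mathfrak{t}_\ell|\leq 2^{-L(\ell-1)}=2^L\cdot 2^{-L\ell}$, every window $\mathcal{W}_\ell(\A_\ell,\alpha)$ is contained in $\mathcal{W}_\ell(\A_{\ell-1},\alpha+2^L)$, giving the pointwise comparison
\[
\Delta_\ell(\A_\ell,\alpha)\leq \tilde{\Delta}_\ell\coloneqq\Delta_\ell(\A_{\ell-1},\alpha+2^L).
\]

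The hard part is showing that $\{\tilde{\Delta}_\ell\}_\ell$ are mutually independent, despite the apparent dependence through the $\filt_{\ell-1}$-measurable center $\A_{\ell-1}$. The key is the rotational invariance of the Gaussian block $X_\ell$: for every fixed angle $\theta_0$, the random variable $\Delta_\ell(\theta_0,\alpha+2^L)$, viewed as a functional of $X_\ell$, has a common law, and Lemma~\ref{lem:homingdecoupling}(2) moreover ensures that this law is sub-gaussian with $\psi_2$-norm $\lesssim \eta\frks_\ell$. Since $X_\ell$ is independent of $\filt_{\ell-1}$ while $\A_{\ell-1}$ is $\filt_{\ell-1}$-measurable, the conditional law of $\tilde{\Delta}_\ell$ given $\filt_{\ell-1}$ therefore agrees with this center-independent law regardless of the conditioning, so $\tilde{\Delta}_\ell\perp\filt_{\ell-1}$. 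Iterating over $\ell$ gives the claimed mutual independence, with the uniform bound $\|\tilde{\Delta}_\ell\|_{\psi_2}\lesssim \eta\frks_\ell$ preserved.

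Once independence is in hand, the remaining estimates are routine sub-gaussian arithmetic. The mean bound $\E\mathfrak{D}_{j,k}^0(\alpha)\leq\sum_\ell\E\tilde{\Delta}_\ell\lesssim \eta L_{j,k}$ follows from $\E|\tilde{\Delta}_\ell|\lesssim \|\tilde{\Delta}_\ell\|_{\psi_2}$. For the centered $\psi_2$ estimate, apply the standard concentration inequality for sums of independent sub-gaussians (see e.g.\ \cite[Prop.~2.6.1]{Vershynin}) to $\sum_\ell\tilde{\Delta}_\ell$, which gives variance proxy $\lesssim \eta^2\sum_\ell \frks_\ell^2 = \eta^2 V_{j,k}$; the pointwise domination together with the nonnegativity of $\mathfrak{D}_{j,k}^0$ then passes this bound to $\mathfrak{D}_{j,k}^0(\alpha)$ in the upper-tail form that is actually used in Lemma~\ref{lem:unlucky}. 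For $\mathfrak{D}_{j,k}^1(\alpha)$ the same comparison with the extra geometric weight $2^{-L(k-\ell)/2}$, combined with independence and the slow variation of $\frks_\ell$, yields
\[
\|\mathfrak{D}_{j,k}^1(\alpha)\|_{\psi_2}\lesssim \eta\Bigl(\sum_\ell 2^{-L(k-\ell)}\frks_\ell^2\Bigr)^{1/2}+\eta\sum_\ell 2^{-L(k-\ell)/2}\frks_\ell\lesssim \eta\frks_k,
\]
where the geometric decay in $k-\ell$ makes both sums dominated by the $\ell=k$ term. The main obstacle really is the independence step: a priori $\tilde{\Delta}_\ell$ is a genuine function of the $\filt_{\ell-1}$-measurable $\A_{\ell-1}$, so independence is not automatic, and it is only the rotational symmetry of $X_\ell$ that renders the conditional law center-free and unlocks the computation.
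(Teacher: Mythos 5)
Your decoupling step is fine as far as measurability goes: since $X_\ell$ is independent of $\filt_{\ell-1}$ and its law is invariant under angular shifts, the conditional law of $\Delta_\ell(\A_{\ell-1},\alpha')$ given $\filt_{\ell-1}$ is indeed center-free, so your comparison variables $\tilde\Delta_\ell$ are mutually independent. The genuine gap is quantitative, in the domination itself. To absorb the step $|\A_\ell-\A_{\ell-1}|\leq|\mathfrak{t}_\ell|\leq 2^{-L(\ell-1)}$ you enlarge the window from width $\alpha$ to width roughly $\alpha+2^{L}$ in the rescaled coordinates of \eqref{eq:Delta}, and then quote part (2) of Lemma~\ref{lem:homingdecoupling} as if it still gave $\|\tilde\Delta_\ell\|_{\psi_2}\lesssim\eta\frks_\ell$. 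But the constant there depends on the window width, and here the width is $2^L$ where $L$ cannot be chosen independently of $\eta$: the proof of Lemma~\ref{lem:homingdecoupling} needs $\sum_j\regs_j^2 2^{-2(Lk-j)}\leq c\eta^2\frks_k^2$, and since $\frks_k^2\asymp L\regs_{Lk-1}^2$ this forces $L\gtrsim\eta^{-2}$. Over a window of $2^L$ rescaled units the H\"older-$\tfrac12$ constant of $X_\ell$ is genuinely of order $\frks_\ell$, not $\eta\frks_\ell$: the window contains on the order of $2^L$ essentially decorrelated unit-separated increments, each of standard deviation $\asymp\frks_\ell/\sqrt{L}\asymp\eta\frks_\ell$, and their maximum picks up a factor $\sqrt{\log 2^L}\asymp\sqrt{L}\asymp\eta^{-1}$; so even a merely logarithmic-in-width growth of the constant erases the factor $\eta$, which is the entire content of the lemma. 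Downstream this is fatal: Lemma~\ref{lem:unlucky} divides $D_{j,n}$ by $\sqrt{\eta}$ and then requires $C\sqrt{\eta}\leq\tfrac14$ for a $C$ uniform in $\eta$; with your constants one only gets $D_{j,n}\lesssim L_{j,n}/\sqrt{\eta}$, and the event $\mathcal{L}_{m,n}$ can no longer be forced by taking $\eta$ small.

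The repair is to keep the window width at $\alpha$ and use that, conditionally on $\filt_{\ell-1}$, the center $\A_\ell$ takes only the two $\filt_{\ell-1}$-measurable values $\A_{\ell-1}$ and $\A_{\ell-1}+\mathfrak{t}_\ell$ (see \eqref{eq:homing}), so that $\Delta_\ell(\A_\ell,\alpha)\leq\Delta_\ell(\A_{\ell-1},\alpha)+\Delta_\ell(\A_{\ell-1}+\mathfrak{t}_\ell,\alpha)$, where each summand has conditional law equal to that of $\Delta_\ell(0,\alpha)$ and hence conditional $\psi_2$-norm at most $C_\alpha\eta\frks_\ell$ with a constant free of $L$; one then concludes by iterating the conditional moment-generating-function bound (an Azuma-type argument), or, as the paper does, by the triangle inequality for $\mathfrak{D}^1_{j,k}$ and the Hoeffding-type inequality \cite[Proposition 2.6.1]{Vershynin} for the centered $\mathfrak{D}^0_{j,k}$. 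A secondary point: even granting your $\psi_2$ bounds, your argument yields only the upper-tail estimate $\Pr\bigl(\mathfrak{D}^0_{j,k}(\alpha)\geq C\eta L_{j,k}+t\bigr)\leq 2\exp\bigl(-ct^2/(\eta^2 V_{j,k})\bigr)$, not the stated two-sided bound on $\|\mathfrak{D}^0_{j,k}(\alpha)-\Exp\mathfrak{D}^0_{j,k}(\alpha)\|_{\psi_2}$, since pointwise domination does not pass to centered variables; the upper-tail form does suffice for Lemma~\ref{lem:unlucky}, but it is weaker than what the lemma asserts, and you should say how you recover (or explicitly weaken) the statement.
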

\begin{proof}[Proof of Lemma \ref{lem:Dmnbound}]
  The first claim follows from Lemma~\ref{lem:homingdecoupling} and the triangle inequality, since
  \[
    \|\mathfrak{D}_{j,k}^1(\alpha)\|_{\psi_2}
    \leq C\eta \sum_{\ell=j+1}^k \frks_\ell 2^{-L(k-\ell)/2}
    \lesssim \eta \frks_k.
  \]
  The third claim follows immediately from Lemma \ref{lem:homingdecoupling}.
  For the second claim, using \cite[Proposition 2.6.1]{Vershynin}
  there is an absolute constant $C>0$
  \[
    \|\mathfrak{D}_{j,k}^0(\alpha)- \Exp \mathfrak{D}_{j,k}^0(\alpha)\|^2_{\psi_2}
    \leq C
    \sum_{\ell=j+1}^k \|\Delta_\ell(\A_\ell,\alpha) - \Exp \Delta_\ell(\A_\ell,\alpha)\|^2_{\psi_2}
    \lesssim
    \eta^2 \sum_{\ell=j+1}^k \frks_\ell^2.
  \]
\end{proof}

\begin{lemma}
  The H\"older constant of the GAF $F_{n,m}$ near $\A_n$ satisfies
  \[
    \Delta_{m,n}(\A_n,\alpha)
    \leq
    \mathfrak{D}_{m,n}^1(4\alpha).
  \]
  \label{lem:windowbound}
\end{lemma}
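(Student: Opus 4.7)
The plan is to reduce the bound on the H\"older constant of the sum $F_{m,n}=\sum_{k=m+1}^n X_k$ to the individual H\"older constants of the block polynomials $X_k$ and then perform a change of variables to express the scale-$2^{-Ln}$ quotient defining $\Delta_{m,n}(\A_n,\alpha)$ in terms of the native scale-$2^{-Lk}$ quotients that define each $\Delta_k(\A_k,\cdot)$.

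First, by linearity of $F_{m,n}$ in the blocks and the triangle inequality, it suffices to prove that for every $m+1 \leq k \leq n$, the contribution of $X_k$ to the supremum defining $\Delta_{m,n}(\A_n,\alpha)$ is at most $\Delta_k(\A_k,4\alpha)\cdot 2^{-L(n-k)/2}$, since then summing over $k$ produces exactly $\mathfrak{D}^1_{m,n}(4\alpha)$.

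Fix two points $(\psi_1,s_1),(\psi_2,s_2)\in\mathcal{S}(\alpha)$, so that the corresponding points in $\mathcal{W}_n(\A_n,\alpha)$ are $(\A_n+\psi_i/2^{Ln},1-s_i/2^{Ln})$. Rewrite each as $(\A_k+\psi_i'/2^{Lk},1-s_i'/2^{Lk})$ via
\[
  \psi_i'=2^{Lk}(\A_n-\A_k)+\psi_i 2^{-L(n-k)},\qquad s_i'=s_i 2^{-L(n-k)}.
\]
From the construction \eqref{eq:homing} of the homing angle process we have $|\A_n-\A_k|\leq \sum_{j=k+1}^n 2^{-L(j-1)}\leq (1-2^{-L})^{-1}\,2^{-Lk}\leq (4/3)\,2^{-Lk}$ for $L\geq 2$, so that $|\psi_i'|\leq 4/3+\alpha\leq 4\alpha$ whenever $\alpha\geq 1$; and $s_i'\in[0,1]$ trivially. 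Hence $(\psi_1',s_1',\psi_2',s_2')\in\mathcal{S}(4\alpha)$.

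The key computation is that the denominator of the H\"older quotient rescales by $2^{-L(n-k)/2}$, since
\[
  |\psi_1'-\psi_2'|+|s_1'-s_2'|
  =2^{-L(n-k)}\bigl(|\psi_1-\psi_2|+|s_1-s_2|\bigr).
\]
Applying the definition \eqref{eq:Delta} of $\Delta_k(\A_k,4\alpha)$ to the numerator gives
\[
  \frac{\bigl|X_k(\A_n+\tfrac{\psi_1}{2^{Ln}},1-\tfrac{s_1}{2^{Ln}})-X_k(\A_n+\tfrac{\psi_2}{2^{Ln}},1-\tfrac{s_2}{2^{Ln}})\bigr|}
       {\sqrt{|\psi_1-\psi_2|+|s_1-s_2|}}
  \leq \Delta_k(\A_k,4\alpha)\cdot 2^{-L(n-k)/2}.
\]
Taking the supremum over $(\psi_i,s_i)\in\mathcal{S}(\alpha)$ and summing over $m+1\leq k\leq n$ yields the claim. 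The only subtle point, which constitutes the main (though modest) obstacle, is verifying the window inclusion $|\psi_i'|\leq 4\alpha$; this is where the factor $4$ in $\mathfrak{D}^1_{m,n}(4\alpha)$ originates and where the restriction $\alpha\geq 1$ and $L\geq 2$ is used.
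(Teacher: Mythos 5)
Your proof is correct and follows essentially the same route as the paper's: decompose $F_{m,n}$ into the blocks $X_k$, bound the H\"older constant of the sum by the sum of the blocks' constants, and observe that the scale-$2^{-Ln}$ quotient for $X_k$ equals its native scale-$2^{-Lk}$ quotient times $2^{-L(n-k)/2}$, with the enlarged parameter $4\alpha$ absorbing the drift of the homing angles. The only difference is one of exposition: you carry out the change of variables and the bound $|\A_n-\A_k|\leq (1-2^{-L})^{-1}2^{-Lk}$ explicitly, whereas the paper delegates this window inclusion to the remark following \eqref{eq:homing} and states the rescaling without computation.
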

\begin{proof}
  We begin with the identity
  \[
    \begin{aligned}
      F_{m,n}(\psi,r)
      &=
      \sum_{j=m+1}^n
      X_j(\psi,r). 
    \end{aligned}
  \]
  Thus the H\"older constant of $F_{m,n}$ over any set is bounded by the sum of the H\"older constants of the $X_j.$  The $\Delta_j$ are these H\"older constants (in fact they are H\"older constants on an even larger set as $\mathcal{W}_n(\A_n,\alpha) \subseteq \mathcal{W}_j(\A_j,4\alpha)$) \emph{after} rescaling by $2^{-L(n-j)/2},$ and so
  \[
    \Delta_{m,n}(\A_n,\alpha)
    \leq \sum_{j=m+1}^n \Delta_j(\A_j,4\alpha) 2^{-L(n-j)/2}
    =\mathfrak{D}_{m,n}^1(4\alpha).
  \]
\end{proof}

We now conclude the proof of the main result of this section.
\begin{proof}[Proof of Proposition \ref{prop:homing}]
  From Lemma \ref{lem:unlucky} and Lemma \ref{lem:homingcenter}
  \[
    \Pr( |\homp_n - u| \geq \frks_n^{1-\gamma}) 
    \leq 2\exp(-c \min\{ L_{m,n}^2/V_{m,n}, n^{\gamma/4}\}).
  \]
  From Lemma \ref{lem:windowbound}, to complete the proof it suffices to bound the probability 
  \(
  \mathfrak{D}_{m,n}^1(\alpha) \geq \frks_n^{1-\gamma}.
  \)
  Using Lemma \ref{lem:Dmnbound}, there is a $c>0$ so that for all $\eta > 0$
  \[
    \Pr(\mathfrak{D}_{m,n}^1(\alpha) \geq \frks_n^{1-\gamma})
    \leq 2\exp( -c \frks_n^{-2\gamma}/\eta^2).
  \]
  Noticing that $\frks_n^{-2\gamma} \geq n^{\gamma/4}$, this completes the proof.

\end{proof}

\section{Covering small disks}\label{sec:branching}

We split the series \eqref{eq:powerseries} into three parts, the homing, spawning, and covering portions of the GAF.  These are given by,
\[
  F = F_{0,k_h} + F_{k_h,k_s} + F_{k_s,\infty},
\]
where $k_s \geq k_h$ are parameters to be determined.

We will use the shorthand in this section $F_h$, $F_s$ and $F_{c}$ for these 3 GAFs respectively.
We also introduce a radial parameter $r_c = 1-2^{-LM}$ for $M = k_s^2$ which will be the radius at which we perform the coverage argument.
We also introduce $\Sigma_h$, $\Sigma_s$ and $\Sigma_c$ for the standard deviations of $F_h$, $F_s$, and $F_c$ at $r_c$, and we let $\Sigma_{c'}$ be the standard deviation of $F_c'$ at $r_c.$

Let $\Theta$ be a uniform random angle chosen independently from the arc $\A_{k_h} + [-2^{- L k_h},2^{- L k_h}]$.
Introduce two events, for $\Delta$ and $\Delta_c$ to be determined and for any $\theta$ in the arc
\[
  \begin{aligned}
  &\mathcal{E}_s(\theta) \coloneqq
  \{ |F_s(\theta,r_c)| \leq \Delta\}, \quad \text{and}\\
%  \quad
  &\mathcal{E}_c(\theta) \coloneqq
  \{ |F(\theta,r_c)-u| \leq \Delta_c, |{F'_c}(\theta,r_c)| \in [\Sigma_{c'},2\Sigma_{c'}] \}.
  \end{aligned}
\]
We let $\mathscr{F}_h$ be the $\sigma$-algebra generated by $F_h$.

%On the event $T_* \leq 2n$, we have from Section \ref{sec:homing} that 
%\todo[inline]{Used monotonicity}
We will work on the event
\begin{equation}\label{eq:Eh}
  \mathcal{E}_h \coloneqq \{
  |F_h(\theta,r_c)-u| \leq \Delta,
  \quad \text{for all } \theta \in \A_{k_h} + [-2^{- L k_h},2^{- L k_h}]
  \}.
\end{equation}
We now bound below, conditionally on $\mathscr{F}_h$, the probability that there exists a $\theta$ such that  $\mathcal{E}_c(\theta)$ occurs.

Define, for a pair $(Y,Z)$ of standard complex normals, with $|\Exp Y \overline{Z}| = \rho$
\[
  p(x,\delta,\rho)
  \coloneqq
  \Pr( |Y+x| \leq \delta, |Z| \in [1,2]).
\]
This is connected to the probability of $\mathcal{E}_c(j)$ by
\[
  \Pr( \mathcal{E}_c(\theta) ~|~ \mathscr{F}_s)
  = p\Bigl( \tfrac{F_h+F_s-u}{\Sigma_c}, \tfrac{\Delta_c}{\Sigma_c}, \rho_{c,c'}(\theta,\theta) \Bigr)
  \eqqcolon p_c(\theta)
\]
where 
\[
  \rho_{\alpha,\alpha^\dag}(\theta_1,\theta_2)
  \coloneqq
  \Big(\Sigma_{\alpha}
  \Sigma_{\alpha^\dag}\Big)^{-1}
  |\Exp\bigl[{F_\alpha}(\theta_1,r_c)\overline{{F_{\alpha^\dag}}(\theta_2,r_c)}\bigr]|.
\]
We note that by rotation invariance, $\rho_{\alpha ,\alpha^\dag}(\theta,\theta)$ does not depend on $\theta,$ and so we will drop the dependence.
\begin{lemma}
  For all $|x| \leq 1$, $\rho,\delta \leq \tfrac12$,
  there is a positive constant $c$,
  \[
  p(x,\delta,\rho)
  \geq c\delta^2.
  \]
  \label{lem:1point}
\end{lemma}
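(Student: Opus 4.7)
The plan is to prove this by conditioning on $Y$ and showing the conditional probability of $|Z| \in [1,2]$ is bounded below uniformly. First, use the standard representation for jointly complex normal variables to write
\[
Z = \bar{\rho'}\, Y + \sqrt{1 - \rho^2}\, W,
\]
where $\rho' = \Exp[Z \bar Y]$ (so $|\rho'| = \rho$) and $W$ is a standard complex normal independent of $Y$. Then
\[
p(x,\delta,\rho) = \int_{|y + x| \le \delta} \frac{e^{-|y|^2}}{\pi}\, \Pr\!\left( \bigl|\bar{\rho'}\, y + \sqrt{1-\rho^2}\, W \bigr| \in [1,2] \right) dy.
\]

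Next I would bound the two factors in the integrand uniformly. Since $|x| \le 1$ and $\delta \le \tfrac12$, any $y$ in the domain of integration satisfies $|y| \le \tfrac32$, so the Gaussian density factor is at least $e^{-9/4}/\pi$. For the conditional probability, I would observe that, viewed as a set in the $w$-plane, $\{w : |\bar{\rho'}\, y + \sqrt{1-\rho^2}\, w| \in [1,2]\}$ is an annulus centered at $-\bar{\rho'}\, y/\sqrt{1-\rho^2}$ with inner and outer radii $1/\sqrt{1-\rho^2}$ and $2/\sqrt{1-\rho^2}$. Using $\rho \le \tfrac12$ gives $\sqrt{1-\rho^2} \ge \sqrt{3}/2$, and a direct estimate shows that this annulus is contained in a disk around the origin of radius at most some absolute constant $C_0$. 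Hence the density $e^{-|w|^2}/\pi$ is bounded below by an absolute constant $c_1 > 0$ on the annulus, and the annulus has area $3\pi/(1-\rho^2) \ge 3\pi$. Combining,
\[
\Pr\!\left( \bigl|\bar{\rho'}\, y + \sqrt{1-\rho^2}\, W \bigr| \in [1,2] \right) \ge 3\pi c_1.
\]

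Finally, multiplying the two lower bounds and integrating over the disk of radius $\delta$ (area $\pi \delta^2$) gives
\[
p(x,\delta,\rho) \ge \frac{e^{-9/4}}{\pi} \cdot 3\pi c_1 \cdot \pi \delta^2 = c\, \delta^2
\]
for some absolute constant $c > 0$, completing the proof.

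There is essentially no hard obstacle here: the lemma is a small-ball estimate against an independent ``macroscopic'' event. The only care needed is to keep all bounds uniform in $\rho \in [0, \tfrac12]$ and $|x| \le 1$, which is straightforward since the assumption $\rho \le \tfrac12$ keeps $\sqrt{1-\rho^2}$ bounded away from $0$ and the center $-\bar{\rho'}\, y/\sqrt{1-\rho^2}$ of the relevant annulus bounded in modulus.
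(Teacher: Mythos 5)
Your proof is correct and is essentially the paper's argument made explicit: the paper's one-line proof (``immediate from the form of the bivariate complex Gaussian density'') amounts to the same uniform lower bound for the joint law of $(Y,Z)$ on the compact region $\{|y+x|\le\delta\}\times\{1\le|z|\le 2\}$, which you obtain by the regression decomposition $Z=\rho' Y+\sqrt{1-\rho^2}\,W$ and conditioning on $Y$. The only (immaterial) slip is a conjugation convention: with $\rho'=\Exp[Z\bar Y]$ and $\Exp|Y|^2=1$ the coefficient of $Y$ should be $\rho'$ rather than $\bar{\rho'}$, but since only $|\rho'|=\rho$ enters your estimates, nothing changes.
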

\begin{proof}
This is immediate from the form of the bivariate complex Gaussian density.
\end{proof}

We also need a corresponding computation for $\Pr( \mathcal{E}_c(\theta_1) \cap \mathcal{E}_c(\theta_2) ~|~ \mathscr{F}_s)$.  Define $\rho_{\mathrm{max}}$
\[
  \rho_{\mathrm{max}}(\theta_1,\theta_2)
  \coloneqq
  \max\{
    \rho_{c,c},
    \rho_{c,c'},
    \rho_{c',c'}
  \}(\theta_1,\theta_2).
\]

%Thus there are 3 nonzero $(j=1,j=2)$--correlations, the maximum in absolute value of which we call
%$\rho_{\mathrm{max}}.$
\begin{lemma}
  There is an $\varepsilon > 0$ so that, if $\rho_{\mathrm{max}} \leq \varepsilon$, $2\Delta,2\Delta_c \leq \Sigma_{c}$,
  and $\rho_{c,c'}\leq \tfrac12$, then
  there is a constant $C>0$ so that
  on the event $\mathcal{E}_s(\theta_1) \cap \mathcal{E}_s(\theta_2) \cap \mathcal{E}_h$ 
  \[
    \Pr( \mathcal{E}_c(\theta_1) \cap \mathcal{E}_c(\theta_2) ~|~ \mathscr{F}_s)
    \leq
    p_c(\theta_1)p_c(\theta_2)
%     (p( \tfrac{\Delta}{3\Sigma_c}, \rho_{c,c'}))^2
    (1+C \rho_{\mathrm{max}}).
  \]
  \label{lem:2point}
\end{lemma}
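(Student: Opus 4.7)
The plan is to condition on $F_c$ at angle $\theta_1$ and show that the resulting conditional law at $\theta_2$ is a small perturbation of its unconditional law, with perturbation size controlled by $\rho_{\mathrm{max}}$. Since $F_c$ is independent of $\mathscr{F}_s$, conditionally on $\mathscr{F}_s$ the vector $v \coloneqq (F_c(\theta_1,r_c), F_c'(\theta_1,r_c), F_c(\theta_2,r_c), F_c'(\theta_2,r_c))$ is a centered complex Gaussian in $\bbc^4$ with covariance
\[
K = \begin{pmatrix} A & B \\ B^{*} & A \end{pmatrix},
\]
the two diagonal blocks coinciding by rotation invariance. Setting $D = \operatorname{diag}(\Sigma_c, \Sigma_{c'})$, the normalized matrix $D^{-1} A D^{-1}$ has unit diagonal and off-diagonals bounded by $\rho_{c,c'} \le 1/2$, hence is uniformly well conditioned, while every entry of $D^{-1} B D^{-1}$ is bounded in modulus by $\rho_{\mathrm{max}}$.

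The first step is to further condition on $v_1 \coloneqq (F_c(\theta_1,r_c), F_c'(\theta_1,r_c))$: standard complex Gaussian conditioning produces $v_2 \coloneqq (F_c(\theta_2,r_c), F_c'(\theta_2,r_c)) \mid (\mathscr{F}_s, v_1)$ as a complex Gaussian with mean $m(v_1) = B^{*} A^{-1} v_1$ and covariance $\tilde A = A - B^{*} A^{-1} B$. On the event $\mathcal{E}_c(\theta_1) \cap \mathcal{E}_s(\theta_1) \cap \mathcal{E}_h$, combining $|F_h(\theta_1,r_c)-u| \le \Delta$, $|F_s(\theta_1,r_c)| \le \Delta$ and $|F(\theta_1,r_c)-u| \le \Delta_c$ with the hypothesis $2\Delta, 2\Delta_c \le \Sigma_c$ forces $|F_c(\theta_1,r_c)| \lesssim \Sigma_c$, and the annulus condition forces $|F_c'(\theta_1,r_c)| \le 2\Sigma_{c'}$, so $|D^{-1} v_1| \lesssim 1$. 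Using the well-conditioning of $D^{-1}AD^{-1}$, this propagates to
\[
|D^{-1} m(v_1)| \lesssim \rho_{\mathrm{max}} \quad \text{and} \quad \|D^{-1}(\tilde A - A) D^{-1}\| \lesssim \rho_{\mathrm{max}}^2.
\]

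The final step is to compare $\Pr\!\bigl(\mathcal{E}_c(\theta_2) \mid \mathscr{F}_s, v_1\bigr)$ with $p_c(\theta_2)$; both are integrals of centered complex Gaussian densities on $\bbc^2$ over the \emph{same} product region $R$ (a disk of radius $\Delta_c$ around $u - F_h(\theta_2,r_c) - F_s(\theta_2,r_c)$ in the $F_c$-coordinate, times the annulus $\{|F_c'(\theta_2,r_c)| \in [\Sigma_{c'}, 2\Sigma_{c'}]\}$) under complex Gaussian laws differing only by the small mean shift $m(v_1)$ and the covariance perturbation $\tilde A - A$. The pointwise density ratio is
\[
\frac{p_{\tilde A,\, m(v_1)}(v_2)}{p_{A,\,0}(v_2)} = \frac{\det A}{\det \tilde A}\exp\!\Big( v_2^{*}(A^{-1} - \tilde A^{-1}) v_2 + 2\Re(v_2^{*}\tilde A^{-1} m(v_1)) - m(v_1)^{*}\tilde A^{-1} m(v_1) \Big),
\]
and on $R$ the same argument applied at $\theta_2$ yields $|D^{-1} v_2| \lesssim 1$, so each term in the exponent, together with $\log(\det A/\det \tilde A)$, is $O(\rho_{\mathrm{max}})$; hence the density ratio is $1 + O(\rho_{\mathrm{max}})$ uniformly on $R$. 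Integrating gives the same bound on the ratio of probabilities, and then taking conditional expectation of $\one_{\mathcal{E}_c(\theta_1)} \Pr(\mathcal{E}_c(\theta_2) \mid \mathscr{F}_s, v_1)$ against $\Pr(\cdot \mid \mathscr{F}_s)$, using $\Pr(\mathcal{E}_c(\theta_1) \mid \mathscr{F}_s) = p_c(\theta_1)$, delivers the stated inequality. The main technical point is keeping the pointwise density ratio bound uniform over $R$; this is painless once one observes that, thanks to $\mathcal{E}_s \cap \mathcal{E}_h$, the region $R$ is automatically contained in the compact set $\{|D^{-1} v_2| \lesssim 1\}$, on which the Gaussian expansion in the small parameter $\rho_{\mathrm{max}}$ is routine.
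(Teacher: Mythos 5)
Your proof is correct and follows essentially the same route as the paper: both arguments reduce the claim to showing that, on the compact integration region forced by $\mathcal{E}_s(\theta_1)\cap\mathcal{E}_s(\theta_2)\cap\mathcal{E}_h$ together with $2\Delta,2\Delta_c\le\Sigma_c$, the four-dimensional complex Gaussian density with cross-block correlations of size $\rho_{\mathrm{max}}$ is within a factor $1+O(\rho_{\mathrm{max}})$ of the corresponding product density, and then integrating. The only difference is presentational: the paper invokes analyticity of $\log\psi$ in the cross-correlations near $\vec\Sigma=\vec 0$ over a compact set, whereas you carry out the same first-order perturbation explicitly, via Schur-complement conditioning on $(F_c(\theta_1,r_c),F_c'(\theta_1,r_c))$ and a uniform bound on the resulting density ratio over the region $R$.
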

\begin{proof}

  We should prove that for $\delta,\rho \leq \tfrac 12$ if $\{(Y_j,Z_j) : j=1,2\}$ are four jointly complex standard Gaussians with $|\Exp Y_j \overline{Z_j}| = \rho$, and with $\rho_{\mathrm{max}}$ the maximum correlation of a $j=1$ variable with a $j=2$ variable, then there is an absolute constant $C>0$ so that
  for any $x_j$ with $|x_j| \leq 1$
    \[
    \Pr( \cap_{j=1,2} \{ |Y_j+x_j| \leq \delta, |Z_j| \in [1,2] \})
    \leq
    p(x_1,\delta,\rho)
    p(x_2,\delta,\rho)
    (1+
    C \rho_{\mathrm{max}}).
  \]
Let $\psi(\omega;\vec\Sigma\,)$ be the density on $\C^4$ of $\{(Y_j,Z_j) : j=1,2\}$ where $\vec\Sigma$ are the correlations in the off-diagonal block.  Then $\log \psi$ is an analytic function of $(\omega,\vec\Sigma)$ in a neighborhood of $\vec\Sigma = \vec 0$.   Moreover at $\vec\Sigma=\vec 0,$ we have
  \[
    \Pr( \cap_{j=1,2} \{ | Y_j+x_j| \leq \delta, |Z_j| \in [1,2] \})
    =
    p(x_1,\delta,\rho)
    p(x_2,\delta,\rho).
  \]
  Under the assumptions of the lemma, $\omega$ is contained within a compact set of $\C^4$, and so the bound follows.
\end{proof}

We will apply the second moment method. We let 
\[
  H_s \coloneqq \Pr(\mathcal{E}_s(\Theta) \mid F)
  \quad\text{and}\quad
  H_c \coloneqq 
  \Pr(\mathcal{E}_s(\Theta) \cap \mathcal{E}_c(\Theta)  \mid F),
\]
which is to say that each of these is the normalized area in the arc $\A_{k_h} + [-2^{-Lk_H},2^{-Lk_h}]$ in which $\mathcal{E}_s$ and $\mathcal{E}_c$ hold, respectively (we condition on all the GAF coefficients).

Let $\mathcal{H}_s$ be the set of $\theta$ so that $\mathcal{E}_s(\theta)$ holds.
Let $\epsilon_b$ to be determined (but sufficiently small that Lemma \ref{lem:2point} holds) and define
for two i.i.d. copies $\Theta_1,\Theta_2$ of $\Theta$
\[
  B = \Pr(\rho_{\mathrm{max}}(\Theta_1,\Theta_2) \geq \epsilon_b) .
\]
To demonstrate that there is a $\theta$ at which $\mathcal{E}_s(\theta)\cap\mathcal{E}_c(\theta)$ holds, it suffices to show that $H_c$ is positive.
So, using the Paley-Zygmund inequality,
\[ 
  \Pr( H_c > 0 ~|~ \mathscr{F}_s) \geq 
  \frac{
    \bigl(\Exp \bigl[H_c ~|~\mathscr{F}_s\bigr]\bigr)^2
  }
  {
  \Exp \bigl[ H_c^2 ~|~ \mathscr{F}_s\bigr]
  }
  .
%  =
%  \frac{p^2_c H_s^2}{
%    \Exp \bigl[ H_c^2 ~|~ \mathscr{F}_s, \{\Theta_j\}\bigr]
%  }.
\]
For the numerator, on the event $\mathcal{E}_h$ and using Lemma \ref{lem:1point}
\[
  \Exp \bigl[H_c ~|~\mathscr{F}_s\bigr] = 
  \Exp p_c(\Theta) \one[ \Theta \in \mathcal{H}_s]
  \geq 
  cH_s \Delta_c^2/\Sigma_c^2.
\]
For the denominator, we bound on the event $\mathcal{E}_h$ (and provided the numerical assumptions in Lemma \ref{lem:2point} hold)
\begin{equation*}
  \begin{aligned}
   \Exp \bigl[H_c^2 ~|~\mathscr{F}_s\bigr] 
   &=  
   \Exp\Bigl( \one[ \Theta_1,\Theta_2 \in \mathcal{H}_s]
   \Pr( \mathcal{E}_c(\Theta_1) \cap \mathcal{E}_c(\Theta_2) ~|~ \mathscr{F}_s\Bigr) \\
   &\leq  
   \Exp\Bigl( \one[ \Theta_1,\Theta_2 \in \mathcal{H}_s]
    p_c(\Theta_1)p_c(\Theta_2)(1+C\epsilon_b)~|~ \mathscr{F}_s\Bigr) 
    +B \\
   &\leq
   \Bigl(\Exp \bigl[H_c ~|~\mathscr{F}_s\bigr]\Bigr)^2(1+C\epsilon_b) + B.
  \end{aligned}
\end{equation*}

So for some constant $C>0$, on the event $\mathcal{E}_h$ 
\begin{equation}\label{eq:Hc}
  \Pr( H_c > 0 ~|~ \mathscr{F}_s)
  \geq 
  \biggl(1 + C \epsilon_b 
  +
  \tfrac{CB}{ (H_s  \Delta_c^2/\Sigma_c^2)^2}
  \biggr)^{-1}.
\end{equation}
All of this holds under the assumptions  $2\Delta,2\Delta_c \leq \Sigma_{c}$,
  and if $\rho_{c,c'}\leq \tfrac12$.
We now give high probability estimates for these quantities, which are based on variance and covariance estimates of different parts of the GAF.

%We will finish the bound by using Jensen and
%\[
%\bigl(\Exp_{\Theta_1,\Theta_2}(\Sigma^2_{\mathrm{max}})\bigr)
%\leq
%\tfrac{\Sigma^4_s}{\Sigma^4_c}
%\Exp(\rho^2_{s,s}(\Theta_1,\Theta_2))
%+\Exp(\rho^2_{c,c}(\Theta_1,\Theta_2))
%+\Exp(\rho^2_{c,c'}(\Theta_1,\Theta_2))
%+\Exp(\rho^2_{c',c'}(\Theta_1,\Theta_2)).
%\]

\paragraph{Computation of the 1-point variance/covariances.}

Under the assumption $\frks_k^2 \sim k^{-2\beta}\omega^2(k)$ is regularly varying, 
$M = k_s^2$,  $k_s/k_h \leq 2$ (we shall take $k_s -k_h$ logarithmic in $k_h$), and $r_c = 1-2^{-LM}$, %and $k_h \gtrsim M^{\alpha}$ for some $\alpha \in (0,1)$
\begin{equation}
  \begin{aligned}
  \Sigma_s^2 
  &= \Exp |F_s(r_c)|^2 
  =  \sum_{n=2^{L (k_h+1)}}^{2^{L (k_s+1)}-1}
  a_n^2 r_c^{2n}
  \asymp
  \sum_{k=k_h}^{k_s} \frks_k^2
  \lesssim_L {k_h^{-2\beta}} \omega^2(k_h)(k_s-k_h), \\
  \Sigma_c^2 
  &= \Exp |F_c(r_c)|^2 
  =  
  \sum_{n=2^{L (k_s+1)}}^{\infty}
  a_n^2 r_c^{2n}
  \asymp_L
  \sum_{k=k_s}^{k_s^2}
  \frks_k^2
  \asymp_L k_h^{-2\beta+1} \omega^2(k_h),
  \\
  \Sigma_{c'}^2
  &= \Exp |F'_c(r_c)|^2 
  =  \sum_{n=2^{L (k_s+1)}}^{\infty}
  n^2 a_n^2 r_c^{2n-2}
  \asymp_L 
  2^{2LM} M^{-2\beta}\omega^2(M).
\end{aligned}
  \label{eq:Sigma_approx}
\end{equation}

We also need an estimate on the covariance $\rho_{c,c'}$.  Note that we have
\[
  \Sigma_{c'}
  \Sigma_{c}
  \rho_{c,c'}
  =
  \big|\Exp\big[ F'_c(r_c)\overline{F_c(r_c)}\big]\big|
  =  
  \sum_{n=2^{L (k_s+1)}}^{\infty}
  n a_n^2 r_c^{2n-1}
  \asymp_L 
  2^{LM} M^{-2\beta}\omega^2(M).
\]
Hence for any $\epsilon >0$
\begin{equation}\label{eq:rhocc'}
  \rho_{c,c'}
  \lesssim_L 
  %\frac{M^{\beta}}{k_h^{\beta-\tfrac12}}.
  {M^{-\beta}}{k_h^{\beta-\tfrac12}}
  \tfrac{\omega(M)}
  {\omega(k_h)}
  \lesssim_L k_h^{-\beta-\tfrac12 + \epsilon}.
\end{equation}

\paragraph{Computation of the 2-point squared correlations.}

Recall that the \\ angles $\Theta_j$ are taken uniformly from an arc, 
which after rotation, is \[[-2^{-L k_h}, 2^{-L k_h}].\]
Now when we compute correlations, we have
\[
  \Sigma_c^2 \rho_{c,c}(\Theta_1,\Theta_2)
  =
  \sum_{n=2^{L (k_s+1)}}^{\infty}
  a_n^2 r_c^{2n}
  e(n(\Theta_1-\Theta_2)).
\]
Hence evaluating the variance, we arrive at
\[
  \Sigma_c^4 \Exp|\rho_{c,c}(\Theta_1,\Theta_2)|^2
  =
  \sum_{n_1,n_2=2^{L (k_s+1)}}^{\infty}
  a_{n_1}^2
  a_{n_2}^2
  r_c^{2(n_1+n_2)}
  \sinc^2((n_1-n_2)2^{-Lk_h}).
\]
Fixing $n_1$ and summing on $n_2 \ge n_1$, we have from the bound $|\sinc(x)| \leq (1 \wedge |x|^{-1})$
\[
  \begin{aligned}
  \sum_{n_2 = n_1}^{\infty}
  &a_{n_2}^2
  r_c^{2(n_1+n_2)}
  \sinc^2((n_1-n_2)2^{-Lk_h})
  \lesssim
  \sum_{n_2 = n_1}^{\infty}
  a_{n_2}^2
  (1 \wedge 2^{Lk_h}/(n_2-n_1))^2
  \\
  &\lesssim
  \sum_{n_2 = n_1}^{\infty}
  (1/n_2) (\log n_2)^{-2\beta} \omega^2(\log n_2)
  (1 \wedge 2^{Lk_h}/(n_2-n_1))^2 \\
  &=
  \sum_{n_2=n_1}^{n_1 + 2^{Lk_h}} \cdot + \sum_{n_2 = n_1 + 2^{Lk_h} + 1}^\infty \cdot
  \lesssim
  2^{Lk_h}
  (1/n_1) (\log n_1)^{-2\beta}\omega^2(\log n_1)
  \sum_{t=1}^{\infty}
  \frac{1}{t^2} \\
  &\lesssim
  2^{Lk_h}
  a_{n_1}^2.
  \end{aligned}
\]
Thus we conclude
\[
  \Sigma_c^4 \Exp|\rho_{c,c}(\Theta_1,\Theta_2)|^2
  \lesssim
  2^{Lk_h}
  \sum_{n_1=2^{L (k_s+1)}}^{\infty}
  a_{n_1}^4
  \lesssim
  2^{L(k_h-k_s)}
  (k_h)^{-4\beta}\omega^4(k_h).
\]
In a similar fashion,
we obtain
\[
  \begin{aligned}
  &\Sigma_c^2 \Sigma_{c'}^2 \Exp|\rho_{c,c'}(\Theta_1,\Theta_2)|^2
  \lesssim
  2^{Lk_h}
  \sum_{n_1=2^{L (k_s+1)}}^{\infty}
  n_1^2
  a_{n_1}^4
  e^{-n_1 2^{-LM}}\\
  &\lesssim
  2^{L(M+k_h)}
  (M)^{-4\beta}\omega^4(M), \\
  &\Sigma_{c'}^4 \Exp|\rho_{c',c'}(\Theta_1,\Theta_2)|^2
  \lesssim
  2^{Lk_h}
  \sum_{n_1=2^{L (k_s+1)}}^{\infty}
  n_1^4
  a_{n_1}^4
%  M^{-2\beta}\omega^4(M)
  e^{-n_1 2^{-LM}}\\
  &\lesssim
  2^{L(3M+k_h)}
  (M)^{-4\beta}\omega^4(M). \\
  \end{aligned}
\]
All of this implies that the worst (largest) term is the 
$\Exp|\rho_{c,c}(\Theta_1,\Theta_2)|^2$
which leads us to the bound
\begin{equation}\label{eq:cor}
\bigl(\Exp_{\Theta_1,\Theta_2}(\rho^2_{\mathrm{max}})\bigr)
  \lesssim
  2^{L(k_h-k_s)}
  {k_h^{-2}}.
\end{equation}

We conclude with showing that:
\begin{lemma}\label{lem:hittingu}
  Let $\Lambda, R > 0$. With $\Delta = (k_h)^{-\beta+1/4}$, $\Delta_c = k_h^{-\Lambda}$ and $k_s = k_h + 4((\Lambda+R)/L) \log_2 k_h$, on the event $\mathcal{E}_h(u)$,
  for all $k_h \geq k_0(\Lambda,R)$ we have
  \[
    \Pr( H_c > 0 ~\mid~ \filt_h)
    \geq 1-k_h^{-R}.
  \]
\end{lemma}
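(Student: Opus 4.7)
The plan is a two-level second-moment argument: apply the Paley--Zygmund bound \eqref{eq:Hc} conditionally on $\mathscr{F}_s$, then integrate out $F_s$ via the tower property, using that $F_s$ is independent of $\mathscr{F}_h$. Since $\mathcal{E}_h \in \mathscr{F}_h$, one may condition freely on $\mathcal{E}_h$. First I would verify the numerical hypotheses of Lemmas~\ref{lem:1point} and~\ref{lem:2point}: by \eqref{eq:Sigma_approx} and \eqref{eq:rhocc'}, for $k_h$ large enough one has $2\Delta/\Sigma_c \asymp k_h^{-1/4}/\omega(k_h) \to 0$, $2\Delta_c/\Sigma_c \asymp k_h^{-\Lambda + \beta - 1/2}/\omega(k_h) \to 0$, and $\rho_{c,c'} \lesssim k_h^{-\beta - 1/2 + o(1)} \to 0$; moreover $|F_h + F_s - u|/\Sigma_c \leq 2\Delta/\Sigma_c \leq 1$ on $\mathcal{E}_h \cap \mathcal{E}_s(\theta)$, legitimizing the $|x| \leq 1$ clause of Lemma~\ref{lem:1point}.

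Next I would prove that $\{H_s \geq 1/2\}$ has overwhelming probability conditional on $\mathscr{F}_h$. Since $F_s$ is independent of $\mathscr{F}_h$ and $|F_s(\theta, r_c)|^2$ is exponentially distributed with mean $\Sigma_s^2$ for every $\theta$, rotational invariance gives
\[
\Exp[1 - H_s \mid \mathscr{F}_h] = \Pr(|F_s(\theta, r_c)| > \Delta) = \exp(-\Delta^2/\Sigma_s^2).
\]
With $\Delta = k_h^{-\beta + 1/4}$ and $\Sigma_s^2 \lesssim_L k_h^{-2\beta} \omega^2(k_h)(k_s - k_h) = k_h^{-2\beta + o(1)}$, the ratio satisfies $\Delta^2/\Sigma_s^2 \gtrsim k_h^{1/2 - o(1)}$. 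Hence $\Exp[1 - H_s \mid \mathscr{F}_h] \leq \exp(-k_h^{1/2 - o(1)})$, and Markov yields $\Pr(H_s < 1/2 \mid \mathscr{F}_h) \leq \exp(-k_h^{1/2 - o(1)})$, far smaller than $k_h^{-R}$.

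Finally I would apply \eqref{eq:Hc} on the event $\mathcal{E}_h \cap \{H_s \geq 1/2\}$ with the cutoff $\epsilon_b = k_h^{-R-1}$ in Lemma~\ref{lem:2point}. The choice $k_s - k_h = (4(\Lambda + R)/L)\log_2 k_h$ is calibrated so that $2^{L(k_h - k_s)} = k_h^{-4(\Lambda + R)}$, and thus by \eqref{eq:cor} and Markov, $B \lesssim k_h^{-4(\Lambda + R) - 2 + 2(R+1)} = k_h^{-4\Lambda - 2R}$. On $\{H_s \geq 1/2\}$ one has $X \coloneqq H_s \cdot c \Delta_c^2/\Sigma_c^2 \gtrsim k_h^{-2\Lambda + 2\beta - 1}/\omega^2(k_h)$, so
\[
B/X^2 \lesssim k_h^{-2R - 4\beta + 2}\omega^4(k_h) \leq k_h^{-R}/4
\]
for $k_h$ large, using $\beta \geq 1/2$ and slow variation of $\omega$. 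With also $C \epsilon_b \leq k_h^{-R}/4$, the inequality \eqref{eq:Hc} yields $\Pr(H_c > 0 \mid \mathscr{F}_s) \geq 1 - k_h^{-R}/2$ on this event, and the tower property then gives
\[
\Pr(H_c > 0 \mid \filt_h) \geq (1 - k_h^{-R}/2)(1 - \exp(-k_h^{1/2 - o(1)})) \geq 1 - k_h^{-R}
\]
on $\mathcal{E}_h$. I expect the main technical delicacy to be the balancing of three scales: $\epsilon_b$ must shrink polynomially in $k_h$ to keep the $C\epsilon_b$ error below $k_h^{-R}$, which in turn forces $B$ to be polynomially small at rate at least $4\Lambda + 3R$ — precisely what the logarithmic spawning gap $k_s - k_h$ engineers through the decorrelation estimate \eqref{eq:cor}.
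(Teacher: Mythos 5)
Your proposal is correct and follows essentially the same route as the paper: the Paley--Zygmund bound \eqref{eq:Hc} conditionally on $\mathscr{F}_s$, a Markov-type argument showing $H_s\ge \tfrac12$ with probability $1-e^{-k_h^{c}}$, a Chebyshev/Markov bound on $B$ using \eqref{eq:cor} together with the choice $2^{L(k_h-k_s)}=k_h^{-4(\Lambda+R)}$, and then integrating out $F_s$ on $\mathcal{E}_h$. The only differences are cosmetic (your $\epsilon_b=k_h^{-R-1}$ versus the paper's $k_h^{-R}/(4C)$, and bounding $B$ directly by $\Exp[\rho_{\max}^2]/\epsilon_b^2$ rather than through a conditional Markov step), and your exponent bookkeeping closes cleanly.
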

\begin{proof}
  We recall \eqref{eq:Hc}:
  \begin{equation*}
    \Pr( H_c > 0 ~|~ \mathscr{F}_s )
    \geq 
    \biggl(1 + C \epsilon_b 
    +
    \tfrac{CB}{ (H_s  \Delta_c^2/\Sigma_c^2)^2}
    \biggr)^{-1}.
  \end{equation*}

  Let $\epsilon_b = k_h^{-R}/(4C).$
  We lower bound $H_s$ and we upper bound $B$ in probability.

  \paragraph{Lower bound for $H_s$:}
%  Let $m$ be the median of $\{ |F_s(\theta,r_c)| : \theta \in \A_{k_h} + [-2^{-Lk_H},2^{-Lk_h}]\in\}$.
  We have
  \begin{multline*}
      \Pr(H_s < 1/2 \mid \filt_h) \\
      \leq 
      \Pr\Bigl( \Pr \{\Theta : |F_s(\Theta,r_c)| \geq \Delta \mid F\} \geq \tfrac12) \mid \filt_h \Bigr)
      \leq\Pr( |F_s(\Theta,r_c)| \geq \Delta \mid \filt_h).
      %\leq \Pr(m \geq \Delta) 
      %\leq \Pr( |\{ \theta : |F_s(\theta,)| \geq \Delta \}| \geq N/2)
      %\leq 2\Pr( |F_s(r_c)| \geq \Delta).
  \end{multline*}  
  From \eqref{eq:Sigma_approx}, $\Sigma_s \leq k_h^{-\beta+1/8}$ for all $k_h$ sufficiently large.  Therefore
  \[
    \Pr(H_s < 1/2 \mid \filt_h) \leq 2\exp(-k_h^{1/4}).
  \]

  \paragraph{Upper bound for $B$:}
  Using Markov's inequality and
  \eqref{eq:cor},
  \[
    \begin{aligned}
    \Pr\big( B \geq \epsilon_b^{-2} k_h^{-3(\Lambda + R)} \mid \filt_h\big)
    &\leq
    \Exp\big( B \epsilon_b^2 k_h^{3(\Lambda + R)} \mid \filt_h\big)\\
    &\leq 
    \bigl(\Exp_{\Theta_1,\Theta_2}(\rho^2_{\mathrm{max}})\bigr) k_h^{3(\Lambda + R)}
    \lesssim k_h^{-R-2}.
    \end{aligned}
  \]
  Since $\Delta_c^2/\Sigma_c^2 \geq k_h^{-3 \Lambda}$ for all $k_h$ sufficiently large  we conclude that 
  \[
     \Pr( H_c > 0 ~|~ \mathscr{F}_s )
     \geq 1-k_h^{-R}/2
  \]
  on the events bounded above.  This concludes the proof.
\end{proof}

\begin{proposition}\label{prop:cover}
  With $\Delta = (k_h)^{-\beta+\tfrac14}$, on the event $\mathcal{E}_h(u)$
  and for any $\epsilon,R > 0$, we have for all $k_h \geq k_0(\epsilon,R)$,
  \[
    \Pr( F(\D) \supset \D(u, k_h^{-1-2\beta-\epsilon}) ~\mid~ \filt_h)
    \geq 1-k_h^{-R}.
  \]
\end{proposition}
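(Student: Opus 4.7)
The plan is to apply the Fuchs covering lemma (Lemma \ref{lem:cover}) at a carefully chosen boundary-adjacent point produced by Lemma \ref{lem:hittingu}. I first invoke Lemma \ref{lem:hittingu} with $\Lambda \coloneqq 2+2\beta+\epsilon$ and with $R+1$ in place of its parameter $R$: on the event $\mathcal{E}_h(u)$, with conditional probability at least $1-k_h^{-R-1}$ given $\filt_h$ there exists an angle $\theta^*$ in the homing arc such that, writing $z_0 \coloneqq r_c e(\theta^*)$,
\[
  |F(z_0)-u| \leq k_h^{-\Lambda}, \qquad |F'_c(z_0)| \in [\Sigma_{c'}, 2\Sigma_{c'}].
\]

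Set $R_F \coloneqq 2^{-LM-2}$, so that $\D(z_0,R_F)$ sits inside the unit disk. To bound $S \coloneqq \sup_{\D(z_0,R_F)} |F'|$, I apply Lemma \ref{lem:GAFtail} to the GAF $F'$ with radial parameter $r \coloneqq 1-2^{-LM-1}$ and buffer $s \coloneqq 2^{-LM-3}$; the computations underlying \eqref{eq:Sigma_approx} yield $\sigma_{F'}(r+s) \asymp \Sigma_{c'}$, and since the lemma carries a factor $s^{-1} \asymp 2^{LM}$ in its tail bound, choosing $\lambda^2 \asymp LM + R\log k_h \asymp L k_h^2$ gives $S \leq C k_h \Sigma_{c'}$ with conditional probability at least $1-k_h^{-R-1}$. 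For the lower bound on $|F'(z_0)|$, the variance computations of \eqref{eq:Sigma_approx} extended to $F_h'$ and $F_s'$ give $\Exp|F_h'(r_c)|^2, \Exp|F_s'(r_c)|^2 \ll \Sigma_{c'}^2$ by exponential factors of $2^{2L(M-k_s)}$; hence on $\mathcal{E}_h$ intersected with a standard Gaussian tail event that bounds $|F_h'(z_0)|$ and $|F_s'(z_0)|$ by $o(\Sigma_{c'})$, one obtains $|F'(z_0)| \geq |F_c'(z_0)| - |F_h'(z_0)| - |F_s'(z_0)| \geq \Sigma_{c'}/2$.

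Then Lemma \ref{lem:cover} with $A = \Sigma_{c'}/2$, $S = C k_h \Sigma_{c'}$, and disk radius $R_F$ yields
\[
  F(\D) \;\supset\; F(\D(z_0,R_F)) \;\supset\; \D\!\Bigl(F(z_0),\, c\tfrac{A^2 R_F}{S}\Bigr) \;=\; \D\!\Bigl(F(z_0),\, c'\tfrac{\Sigma_{c'} \cdot 2^{-LM}}{k_h}\Bigr).
\]
The covered radius is $\asymp M^{-\beta}\omega(M)/k_h \asymp k_h^{-1-2\beta}\omega(k_h)$ by slow variation of $\omega$ and the identity $M = k_s^2 \asymp k_h^2$; for $k_h$ large this exceeds $k_h^{-1-2\beta-\epsilon/2}$. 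Combined with $|F(z_0)-u| \leq k_h^{-\Lambda} \ll k_h^{-1-2\beta-\epsilon}$, one concludes $F(\D) \supset \D(u, k_h^{-1-2\beta-\epsilon})$.

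The main obstacle is the lower bound on $|F'(z_0)|$: since we condition on $\filt_h$, and $\mathcal{E}_h$ controls $|F_h|$ only on a tiny arc at radius $r_c$, the quantity $|F_h'(z_0)|$ is not directly controlled. The remedy is to enlarge $\mathcal{E}_h$ (or to intersect it with a companion event) so that $|F_h'(z_0)|$ is bounded by a suitable multiple of $\sqrt{\Exp|F_h'(r_c)|^2}\sqrt{\log k_h}$, which remains a high-probability event by Gaussian concentration. The characteristic factor $k_h^{-1}$ in the final covering radius arises precisely from the $\lambda \asymp k_h$ forced by Lemma \ref{lem:GAFtail} in controlling $\sup|F'|$ over a near-boundary disk of area $\asymp 2^{-2LM}$, which inflates $S$ by that factor relative to $\Sigma_{c'}$.
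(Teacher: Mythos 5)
Your proposal follows essentially the same route as the paper's own proof: Lemma \ref{lem:hittingu} supplies a point $z_0$ at radius $r_c$ with $|F(z_0)-u|\le\Delta_c$ and $|F_c'(z_0)|\ge\Sigma_{c'}$, Lemma \ref{lem:GAFtail} together with the variance computations behind \eqref{eq:Sigma_approx} controls $\sup|F'|$ and the lower-order $F_h'+F_s'$ contribution on a disk of radius $\asymp 2^{-LM}$, and Fuchs' Lemma \ref{lem:cover} then yields a covered ball of radius $\gtrsim k_h^{-1-2\beta-\epsilon/2}$ around $F(z_0)\approx u$. The $\filt_h$-measurability issue for $F_h'$ that you flag is present in the paper's argument as well (it bounds $\sup|F_h'+F_s'|$ by the unconditional Gaussian tail estimate), and your suggested fix of building the derivative bound into the conditioning event is consistent with how the proposition is used downstream, so this is a shared caveat rather than a divergence in approach.
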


\begin{proof}[Proof of Proposition \ref{prop:cover}]
  Let $\Delta_c = k_h^{-4}$. On the event that $H_c > 0,$ whose probability is at least $1-k_h^{-R}$ by Lemma  \ref{lem:hittingu}, there is a $z_j$ with $|z_j| = r_c = 1-2^{-LM}$ at which
  \[
%   |F_s(z_j)| \leq \Delta,\quad
   |F(z_j)-u| \leq \Delta_c
   \quad\text{and}\quad
   |{F'_c}(z_j)| \geq \Sigma_{c'}.
  \]
  We apply Lemma \ref{lem:cover} with $R=(1-r_c)/2$, and we bound $S$ by $\sup |F'(z)|$ over $|z| \leq (1+r_c)/2 \eqqcolon r_c'$.  We also bound $A$ by $\Sigma_{c'}-\sup |F_h'+F_s'|$ on the same disk.

  Recall from \eqref{eq:Sigma_approx} that $\Sigma_{c'} \asymp 2^{LM}M^{-\beta}\omega(M)$.  The same computation shows that with $r_c'' = \tfrac12(1+r_c')$ we have
  \[
    \begin{aligned}
  &\Exp |F_c'(r_c'')|^2 \lesssim 2^{2LM+4} M^{-2\beta} \omega^2(M), \, \text{ and } \,\,\\
  &\Exp |(F_h'+F_s')(r_c'')|^2 \lesssim 2^{2Lk_s} (k_s)^{-2\beta} \omega^2(k_s).
    \end{aligned}
  \]
%  \[
%    \begin{aligned}
%    &\Exp |F_c'(r_c'')|^2 \lesssim 2^{2LM+4} M^{-2\beta} \omega^2(M), \\
%    &\Exp |(F_h'+F_s')(r_c'')|^2 \lesssim 2^{2Lk_s} (k_s)^{-2\beta} \omega^2(k_s).
%    \end{aligned}
%  \]
  Hence by Lemma \ref{lem:GAFtail}, with probability at least $1-e^{-cM}$
  \[
    \sup_{r_c' \D} |F_c'| \lesssim \sqrt{M}\Sigma_{c'}
    \quad\text{and}\quad
    \sup_{r_c' \D} |F_h'+F_s'| \leq \Sigma_{c'}/2.
  \]
  Hence the image of $F$ contains a ball around $F(z_j)$ of radius at least
  \[
    CA^2R/S \gtrsim \Sigma_{c'} 2^{-LM} / \sqrt{M} \gtrsim M^{-\beta-1/2-\epsilon/4} = k_h^{-1-2\beta-\epsilon/2}.
  \]
  This is bigger than $\Delta_c$ by a factor that tends to infinity with $k_h$, and since $|F(z_j)-u| \leq \Delta_c$, then the claim follows.

\end{proof}

\section{Covering large disks}\label{sec:coverage}

\begin{proof}[Proof of Theorem~\ref{thm:coverage} in the case $\beta<1$]
  We give a simpler proof of the main theorem in the case that $L_{1,n}^2/\log n \to \infty$ as $n\to\infty$.  This includes all the $\beta < 1$ cases but also some $\beta = 1$ cases; to see this in the case $\beta < 1$, note that
  \[
    L_{1,n} \gtrsim \sum_{j=1}^n j^{-\beta} \omega(j) \gtrsim n^{(1-\beta)/2}.
  \]

  It suffices to show that for any $u \in \C$ and any $\delta > 0$
  \[
    \Pr( F(\D) \supset \D(u,1)) \geq 1 - \delta.
  \]
  Now, with some large $n$ to be determined, we pick a $n^{-4}$--net $\{u_q\}$ of $\D(u,1)$ of cardinality $Cn^8$.
  By Proposition \ref{prop:homing} (with $m=0,$ $u=u_q$, $\gamma = \tfrac18$, $\alpha=1$ and all $n$ sufficiently large that $L_{1,n} \geq \max\{2(|u|+1),\sqrt{V_{0,\infty}}\}$), with probability $1-n^{-10}$ there is a $\Theta_q$ so that
  \[
     |F_{0,n}(\Theta_q) - u_q| + \Delta_{0,n}(\Theta_q,1) \leq 2\frks_n^{7/8}.
  \]
  It follows that
  \[
    \sup_{(\theta,r) \in \mathcal{W}_n(\Theta_q,1)} |F_{0,n}(\theta,r) - u_q|
    \leq |F_{0,n}(\Theta_q) - u_q| + 2\Delta_{0,n}(\Theta_q,1) \leq 4\frks_n^{7/8}.
  \]

  Now we apply Proposition \ref{prop:cover} (with parameters $k_h = n$, $u=u_q$, $\epsilon = \tfrac 12$ and $R=10$) and working on the event that everything in previous paragraph held. These apply since $4\frks_n^{7/8} \leq n^{-\beta + 1/4}$ for all $n$ sufficiently large. In conclusion
  \[
    \Pr(  F(\D) \supset \D(u_q,n^{-3/2-2\beta})) \geq 1-2n^{-10}.
  \]
  By a union bound over all $q$, we conclude for all $n$ sufficiently large
  \[
    \Pr(  F(\D) \supset \D(u,1)) \geq 1-2Cn^{-2} \geq 1-\delta/2,
  \]
  which completes the proof.
\end{proof}

\begin{remark}
  In the case of $\beta < 1$, it is even possible to avoid the homing process entirely.  Instead, one can define an intermediate scale $r$ with $1 \leq r \leq n$.  Then, in the first $r$ blocks, one asks the GAF not to grow too much (in supremum).  In the second portion $F_{r,n}$, one can find $2^r$ (almost) independent Gaussians, and these Gaussians have variance $r^{1-2\beta}$. 
  
  The cardinality $2^r$ beats the probability $e^{-c(|u_q|)r^{2\beta -1}}$ of hitting a point $u_q$ in the net, and so from those points $z_q \in \D$ we can still use Proposition \ref{prop:cover}. In fact, one also does not need Proposition \ref{prop:cover}, in that it is not necessary to look around the $z_q$ and one can simply get close to $u_q$ with a ball around $z_q$ itself.
\end{remark}

We now turn to the case $\beta = 1$.

\begin{proposition}
  \label{prop:homingnet}
  For any $\gamma \in (0,\tfrac14)$
  and $\beta=1$, there is an integer $m$ sufficiently large that the following holds. 
  For any $\epsilon > 0$ and for any $u \in \C$, there is a $j_1$ so that for any $j \geq j_1$, with probability $1-\epsilon$:
  \begin{enumerate}
    \item there is a $\nu_j$-net $\{u_q : 1 \leq q \leq 2^{2^j}\}$ of $\D(u,1)$ with $\nu_j \lesssim 2^{2^{-j-1}}$;
    \item there is a collection $\{\Theta_q : 1 \leq q \leq 2^{2^j}\}$ in $[0,1]$ so that
      \[
	\sup_{ (\theta,r) \in \mathcal{W}_{m^j}(\Theta_q,1)} |F_{0,m^j}(\theta, r) - u_q |
	\leq 12 (\frks_{m^j})^{1-\gamma}.
      \]
  \end{enumerate}
\end{proposition}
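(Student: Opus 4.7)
The plan is to build the collection $\{(\Theta_q, u_q)\}$ by induction on $j$, refining each existing window at each step via Proposition \ref{prop:homing} applied to the next increment block $F_{m^j, m^{j+1}}$. At level $j$ we maintain the inductive hypothesis that $\{u_q\}_{q=1}^{2^{2^j}}$ is a $\nu_j$-net of $\D(u,1)$ with $\nu_j \lesssim 2^{-2^{j-1}}$, that $|F_{0, m^j}(\Theta_q) - u_q| \leq 3(\frks_{m^j})^{1-\gamma}$, and that a \emph{sharper} H\"older bound $\Delta_{0, m^j}(\Theta_q, 4) \leq C\eta \frks_{m^j}$ holds (much smaller than $(\frks_{m^j})^{1-\gamma}$ for $\beta=1$). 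Combined with the H\"older estimate applied on $\mathcal{W}_{m^j}(\Theta_q, 1)$, these imply the supremum bound stated in the proposition.

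To pass from level $j$ to $j+1$, for each $q$ I would inscribe a $\nu_{j+1}$-net $\{u_{q,q'}\}_{q'=1}^{2^{2^j}}$ of $\D(u_q, \nu_j)$ and partition $\mathcal{W}_{m^j}(\Theta_q, 1)$ into $2^{2^j}$ disjoint sub-arcs with representative starting angles $\A_{m^j}^{(q,q')}$. For each pair $(q,q')$, apply Proposition \ref{prop:homing} to $F_{m^j, m^{j+1}}$ starting from $\A_{m^j}^{(q,q')}$ with target $u' = u_{q,q'} - F_{0, m^j}(\A_{m^j}^{(q,q')})$; the precondition $L_{m^j, m^{j+1}} \geq 2|u'|$ holds since $|u'| = O(1)$ while $L_{m^j, m^{j+1}} \asymp \log m \cdot \omega(m^j) \to \infty$. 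Writing $\Theta_{q,q'}$ for the resulting endpoint, decompose
\[
|F_{0, m^{j+1}}(\Theta_{q,q'}) - u_{q,q'}| \leq |F_{m^j, m^{j+1}}(\Theta_{q,q'}) - u'| + |F_{0, m^j}(\Theta_{q,q'}) - F_{0, m^j}(\A_{m^j}^{(q,q')})|.
\]
The first term is $\leq 2(\frks_{m^{j+1}})^{1-\gamma}$ by Proposition \ref{prop:homing}, and the second is $\leq \sqrt{2}\,C\eta \frks_{m^j}$ by the tight H\"older IH, since the displacement is at most $2 \cdot 2^{-Lm^j}$. For $\beta=1$ one has $\frks_{m^j}/(\frks_{m^{j+1}})^{1-\gamma} \asymp m^{1-(j+1)\gamma} \to 0$, so the second term is negligible and the refined approximation bound $3(\frks_{m^{j+1}})^{1-\gamma}$ closes the induction. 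The H\"older IH at level $j+1$ is re-established by applying Lemma \ref{lem:Dmnbound} to the cumulative H\"older sums on the increment block.

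Choose $m$ sufficiently large in $\gamma$ so that $m^{\gamma/4} \geq 4$. Then $L_{m^j, m^{j+1}}^2/V_{m^j, m^{j+1}} \asymp \log^2(m)\cdot m^j$ dominates $(m^{j+1})^{\gamma/4}$ for $j$ large, and Proposition \ref{prop:homing} gives per-application failure $\leq \exp(-c(m^{j+1})^{\gamma/4})$. A union bound over $2^{2^{j+1}}$ applications yields failure $\leq 2^{2^{j+1}}\exp(-c(m^{j+1})^{\gamma/4})$, which decays super-exponentially in $j$ and is summable over $j \geq j_0$. The base case at some initial level $j_0$ (depending on $|u|$ and $\epsilon$ so that $L_{0, m^{j_0}} \geq 2|u|$ and the base-case union bound is absorbed into $\epsilon/2$) is handled by applying Proposition \ref{prop:homing} directly on $[0, m^{j_0}]$ for a manageable number of targets, together with a direct application of Lemma \ref{lem:Dmnbound} for the initial H\"older bound.

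The main obstacle is the inductive tightening of the approximation bound from $(\frks_{m^j})^{1-\gamma}$ to $(\frks_{m^{j+1}})^{1-\gamma}$ at each step. If we relied on the loose H\"older estimate $\Delta_{0, m^j} \leq 2(\frks_{m^j})^{1-\gamma}$ coming from Proposition \ref{prop:homing}, the term $|F_{0, m^j}(\Theta_{q,q'}) - F_{0, m^j}(\A_{m^j}^{(q,q')})|$ in the display above would itself be $O((\frks_{m^j})^{1-\gamma})$, matching the previous level's error and stalling the induction. The resolution is to separately track the sharper bound $\Delta_{0, m^j} \lesssim \eta\frks_{m^j}$, which concentrates at the scale $\eta\frks$ rather than $(\frks)^{1-\gamma}$ via Lemma \ref{lem:Dmnbound}; combined with the regularity $\frks_{m^j} \ll (\frks_{m^{j+1}})^{1-\gamma}$ for $\beta = 1$ (valid for $m$ exceeding a threshold depending only on $\gamma$), the induction closes at the required refined scale.
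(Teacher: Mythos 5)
Your overall skeleton — inductively refining a net of targets, running the homing Proposition \ref{prop:homing} block-by-block over $[m^j,m^{j+1}]$, and beating the $2^{2^{j+1}}$-fold union bound with the $\exp(-c\,(m^j)^{\gamma/4})$ and $L^2_{m^j,m^{j+1}}/V_{m^j,m^{j+1}}\asymp m^j$ decay — is the same as the paper's. But there is a genuine gap in how your net ever reaches the \emph{whole} disk $\D(u,1)$. Recall that $m$ must be fixed before $u$, and that for $\beta=1$ the per-block drift budget is $L_{m^j,m^{j+1}}\asymp \omega(m^j)\log m$, which does \emph{not} tend to infinity (your assertion that it does is false): $\omega$ may be bounded, or even tend to $0$, since the only hypotheses are $\sum_k k^{-2}\omega^2(k)<\infty$ and $\sum_k k^{-1}\omega(k)=\infty$ (the theorem explicitly covers arbitrarily slowly diverging GAFs). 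This kills both readings of your base case. If the base level $j_0$ is to carry a genuine $\nu_{j_0}$-net of $\D(u,1)$ with $\nu_{j_0}\lesssim 2^{-2^{j_0-1}}$, you must home $2^{2^{j_0}}$ targets spread over the full unit disk using the blocks $[0,m^{j_0}]$, and the per-target failure probability from Proposition \ref{prop:homing} is only $\exp\bigl(-c\min\{L_{0,m^{j_0}}^2/V_{0,m^{j_0}},(m^{j_0})^{\gamma/4}\}\bigr)$ with $V_{0,m^{j_0}}\asymp V_{0,\infty}$ constant and $L_{0,m^{j_0}}$ possibly diverging arbitrarily slowly in $j_0$; this cannot beat a union bound over double-exponentially many points. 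If instead you start from the ``manageable number of targets'' you mention (a coarse net), then the first refinement steps must home over distances comparable to the coarse mesh, i.e.\ of order $1$, which violates the requirement $L_{m^j,m^{j+1}}\geq 2|u'|$ whenever $\omega$ is small — and in addition the stated inductive hypothesis (mesh $\lesssim 2^{-2^{j-1}}$ at every level) is no longer what you prove. The paper's proof contains exactly the device you are missing: the targets are $u_{j,q}=b_jx_{j,q}+z_j$, where the center $z_j$ and radius $b_j$ start at $0$ and drift by at most $\tfrac16 L_{m^j,m^{j+1}}$ per step; since $\sum_j L_{m^j,m^{j+1}}=\infty$ by the unboundedness assumption, after $j_1=j_1(u,\epsilon)$ steps the net has expanded to a fine net of $\D(u,1)$, while every individual homing application has a target of size at most $\tfrac12L_{m^j,m^{j+1}}$ and enjoys the $\omega$-independent ratio $L^2/V\asymp m^j$. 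This is precisely why $j_1$ is allowed to depend on $u$ while $m$ is not.

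A secondary remark: the issue you single out as the ``main obstacle'' — that one must track a sharper H\"older bound $\Delta_{0,m^j}\lesssim\eta\frks_{m^j}$ or the induction stalls — is not how the paper closes the loop, and is not really the crux. In the paper, the next block's homing target is $w_{j+1,q}=u_{j+1,q}-F_{0,m^j}(\theta_{j,q'})$, i.e.\ it subtracts the \emph{achieved} value at the current anchor, so approximation errors are re-corrected at each generation and never compound; the contribution of the older blocks to the oscillation over the new, much finer window is suppressed by the factors $2^{-L(m^j-m^{\ell+1})/2}$ coming from the fact that each $\Delta_{m^\ell,m^{\ell+1}}$ is normalized at its own block scale (cf.\ Lemma \ref{lem:windowbound} and \eqref{eq:domination}), so the loose bound $2(\frks_{m^{\ell+1}})^{1-\gamma}$ from the homing event suffices. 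Your variant with the $\eta\frks$-scale bound via Lemma \ref{lem:Dmnbound} is plausibly workable (the sub-gaussian tails survive the $2^{2^j}$ union bound for $m$ large), but it is extra machinery solving a non-problem; the real missing ingredient is the gradual expansion of the target net described above.
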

\begin{proof}
  Let $u$ and $\epsilon > 0$ be given, and $a$ as in Lemma \ref{lem:Gammamonotonicity}.
  Let $m$ be an integer strictly larger than $\max\{1/a,3^{4/\gamma}\}$.  We can further ensure by increasing $m$ if need be that for all $j$ sufficiently large
  \begin{equation}\label{eq:domination}
    \sum_{k=0}^{j} (\frks_{m^{k}})^{1-\gamma} 2^{-L(m^j-m^{k})/2} \leq 2 (\frks_{m^{j+1}})^{1-\gamma}.
  \end{equation}
  We observe the following estimates which we will use repeatedly:
  for some $c_L,c_V$ positive (depending on $m$)
  \begin{equation}
    L_{k,mk} \sim c_L\omega(k)
    \quad\text{and}
    \quad
    V_{k,mk} \sim c_V k^{-1}\omega^2(k).
    \label{eq:LV}
  \end{equation}
  For every $j \in \N$ we let
  \[
    \{ x_{j, q} : 1 \leq q \leq 2^{2^j}\}
  \]
  be an an $\epsilon_j$-net of $\D(0,1)$ with $\epsilon_j \coloneqq c 2^{-2^{j-1}}$ for some constant $c>0$.  We shall pick a $j_0$ large to be determined.  To start, we will require $j_0$ sufficiently large that for all $j \geq j_0.$
  \[
    \epsilon_j \leq \tfrac16 L_{m^j,m^{j+1}}.
  \]
  We also pick a parameter $j_1 \geq j_0$ to be determined. 

  Let $z_j$ and $b_j$ be two converging sequences where $z_j=b_j=0$ for all $j \leq j_0$,
  where $z_j =u$ and $b_j=1$ for all $j \geq j_1$,
  and where
  \[
    |z_{j+1}-z_j| + | b_{j+1}-b_j| \leq \tfrac16 L_{m^j,m^{j+1}}.
  \]
  This is possible because the series $L_{m^{j_0},m^j} \to \infty$ as $j \to \infty.$
  We further define 
  \[
    \{ u_{j,q} : 1 \leq q \leq 2^{2^j}\}
    \quad\text{where}\quad u_{j,q} \coloneqq b_j x_{j,q} + z_j.
  \]
  We shall also require that $j_0$ is large enough that for all $j \geq j_0$,
  \[
    12( \frks_{m^j})^{1-\gamma}
    \leq
    \tfrac16 L_{m^j,m^{j+1}},
    \,%\quad 
    L_{0,m^{j_0}} \geq \sqrt{ V_{0,m^{j_0}}},
    \,\,%\quad
    \text{and}
    \,\,% \quad
    L_{m^j,m^{j+1}} \geq \sqrt{ V_{m^j,m^{j+1}}},
  \]
  where the first inequality is possible as its left-hand side decays exponentially in $j$ while the right-hand side satisfies \eqref{eq:LV}.

  We further require that $j_0$ is sufficiently large that
for $c$ the constant from Proposition \ref{prop:homing} (with $\alpha = 4$)
  \[
    \exp(-c \min\{ L_{0,m^{j_0}}^2/V_{0,m^{j_0}}, (m^{j_0})^{\gamma/4}\}) \leq \epsilon/3
  \]
  and
  \[
    \sum_{j=j_0}^\infty 2^{2^{j+1}}\exp(-c \min\{ L_{m^j,m^{j+1}}^2/V_{m^{j},m^{j+1}}, (m^{j})^{\gamma/4}\}) \leq \epsilon/3.
  \]
  The second inequality follows since
  for $j_0$ large enough, using \eqref{eq:LV} and $m \ge 3^{4/\gamma}$,
  \[
    \sum_{j=j_0}^\infty 2^{2^{j+1}}\exp(-c \min\{ L_{m^j,m^{j+1}}^2/V_{m^{j},m^{j+1}}, (m^{j})^{\gamma/4}\}) 
    \leq
    \sum_{j=j_0}^\infty 2^{2^{j+1}}\exp(- 3^j ).
  \]

  We inductively define a family of angles $\{\theta_{j,q} : j\geq j_0, 1 \leq q \leq 2^{2^j}\}$ by the following rule.
  We first define for all $1 \leq q \leq 2^{2^{j_0}}$
  (recalling the definition of the H\"older constant $\Delta$ in \eqref{eq:Delta})
  \[
    \theta_{j_0,q} = \operatorname{argmin}\{|F_{0,m^{j_0}}(\theta)| + \Delta_{0,m^{j_0}}(\theta,4): \theta \in [0,2\pi]\}. % \leq 4(\frks_{m^{j_0}})^{1-\gamma}\}.
  \]
  For larger $j_0 \leq j < j_1$, we define $\theta_{j+1,q}$ for every $1 \leq q \leq 2^{2^{j+1}}$, by first finding, for each $q$ a $q'=q'(q,j)$ with $1 \leq q' \leq 2^{2^j}$ so that 
  \[
    |x_{j+1,q} - x_{j,q'}| \leq \epsilon_j.
  \]
  We then define
  \[
    w_{j+1,q} 
    \coloneqq
    u_{j+1,q} - F_{0,m^{j}}(\theta_{j,q'})
    = 
    (u_{j+1,q} - u_{j,q'}) + (F_{0,m^{j}}(\theta_{j,q'}) - u_{j,q'}),
  \]
  which functions as the homing target for $q$-th `particle' in the $(j+1)$-st step.
  We define the angular preimage $\theta_{j+1,q}$ of the same particle by
  %$(j+1)$-st step of the $q$-th particle
  \[
    \begin{aligned}
    &\operatorname{argmin}
    \Bigl\{|F_{m^j,m^{j+1}}(\theta) - w_{j+1,q}| + \Delta_{m^j,m^{j+1}}(\theta,4) : \theta \in \mathrm{Arc}
    \Bigr\}, \\
    &\mathrm{Arc} = 
    \{\theta : |e(\theta) - e(\theta_{j,q'})| \leq 2^{-Lm^j+1}\}\Bigr\}. 
    \end{aligned}
    %\{\theta : (\theta,1) \in W_{m^j}(\theta^{j}_{q'}),  |X_{m^j,m^{j+1}}(\theta) - (u^{j+1}_q - u^{j}_{q'}) + (F_{0,m^{j}}(\theta^{j}_{q'}) - u^j_{q'})| \leq 4(\frks_{m^{j_0}})^{1-\gamma}\}.
  \]
  We define the corresponding event that this angle fails to be good enough by
  \[
    E^{j+1}_q \coloneqq
    \Bigl\{|F_{m^j,m^{j+1}}(\theta_{j+1,q}) - w_{j+1,q}| + \Delta_{m^j,m^{j+1}}(\theta_{j+1,q},4) \geq 2( \frks_{m^{j+1}})^{1-\gamma}\Bigr\},
  \]
  and we also let $E^{j+1}$ be the union over all $q$ of these events.  We define $E^{j_0}$ by (recalling all $\theta_{j_0,q}$ are the same for $1\leq q \leq 2^{2^{j_0}}$)
  \[
    E^{j_0}
    \coloneqq
    \Bigl\{|F_{0,m^{j_0}}(\theta_{j_0,q}) - 0| + \Delta_{0,m^{j_0}}(\theta_{j_0,q},4) \geq 2( \frks_{m^{j_0}})^{1-\gamma}\Bigr\}.
  \]
  Finally, we define the good events for $j_0 \leq j \leq j_1$
  \[
    G^j = \Big\{
      \sup_{(\theta,r) \in \mathcal{W}_{m^j}(\theta_{j,q},1)}
    |F_{0,m^{j}}(\theta,r) - u^j_q| \leq 12( \frks_{m^{j}})^{1-\gamma}
      : \text{for all}~1 \leq q \leq 2^{2^j}
    \Big\}.
  \]

  Now we claim that $G^j \setminus (\bigcup_{\ell=j_0}^{j+1} E^{\ell} ) \subset G^{j+1}.$
  So we fix a $q$ with $1 \leq q \leq 2^{2^{j+1}}$.
  By iterating the application of the $q'(\cdot, \cdot)$ map, we define a sequence $q^{(\ell)}$ for $j_0 \leq \ell \leq j+1$ given by $q^{(\ell)} = q'( q^{(\ell+1)}, \ell)$ and with initial condition $q^{(j+1)} = q.$
  Then by construction for all $j_0 \leq \ell \leq j+1$
  \[
  |e(\theta_{\ell+1,q^{(\ell+1)}})
  -
  e(\theta_{\ell,q^{(\ell)}})| \leq 2^{-Lm^\ell+1}
  \]
  and \emph{off} the event $E^\ell$,
  \[
    |\Delta_{m^\ell,m^{\ell+1}}(\theta_{\ell+1,q^{(\ell+1)}},4)| \leq 2(\frks_{m^{\ell+1}})^{1-\gamma}.
  \]

  Now, for any $j_0 \leq \ell \leq j$ and any $(\theta,r) \in \mathcal{W}_{m^{j+1}}(\theta_{j+1,q},1)$
  \[
    \max\{|1-r|+|e(\theta) - e(\theta_{\ell+1,q^{(\ell+1)}})|,
    |e(\theta_{j,q^{(j)}}) - e(\theta_{\ell+1,q^{(\ell+1)}})|\}
    \leq 4 \cdot 2^{-Lm^{\ell+1}}.
  \]
  So, we have $(\theta, r)$ and $(\theta_{j,q^{(j)}},1)$ are in $\mathcal{W}_{m^{\ell+1}}(\theta_{\ell+1,q^{(\ell+1)}},4) \cap \mathcal{W}_{m^{j}}(\theta_{j,q^{(j)}},4)$,
  and thus off the event $E^{\ell+1}$ %and for any $(\theta,r) \in \mathcal{W}_{m^{j+1}}(\theta_{j+1,q},1)$
  \[
    \begin{aligned}
    &|F_{m^\ell,m^{\ell+1}}(\theta,r) - F_{m^\ell,m^{\ell+1}}(\theta_{j,q^{(j)}})|\\
    &\leq 
    \Delta_{m^\ell,m^{\ell+1}}(\theta_{\ell+1,q^{(\ell+1)}},4) 2^{-L(m^j-m^{\ell+1})/2+1} \\
    &\leq 4(\frks_{m^{\ell+1}})^{1-\gamma}2^{-L(m^j-m^{\ell+1})/2}.
    \end{aligned}
  \]
  Hence off the event $\bigcup_{\ell=j_0}^{j+1} E^{\ell}$
  and by \eqref{eq:domination}
  \[
    |F_{0,m^{j}}(\theta,r) - F_{0,m^{j}}(\theta_{j,q'})|
    \leq
    4\sum_{\ell = j_0}^{j} (\frks_{m^{\ell}})^{1-\gamma}2^{-L(m^j-m^{\ell})/2}
    \leq 8(\frks_{m^{j+1}})^{1-\gamma},
  \]
  and so for any $(\theta,r) \in \mathcal{W}_{m^{j+1}}(\theta_{j+1,q},1)$,
  \[
    \begin{aligned}
    &|F_{0,m^{j+1}}(\theta,r) - u_{j+1,q}| \\
    &\leq
    |F_{m^j,m^{j+1}}(\theta,r) - F_{m^j,m^{j+1}}(\theta_{j+1,q})|+
    |F_{m^j,m^{j+1}}(\theta_{j+1,q}) - w_{j+1,q}|\\
    & + |F_{0,m^{j}}(\theta,r) - F_{0,m^{j}}(\theta_{j,q'}) | \\
    &\leq \Delta_{m^j,m^{j+1}}(\theta_{j+1,q},4)\sqrt{1+1}
    +|F_{m^j,m^{j+1}}(\theta_{j+1,q}) - w_{j+1,q}|+8( \frks_{m^{j+1}})^{1-\gamma} \\
    &\leq (2\sqrt{2}+8)( \frks_{m^{j+1}})^{1-\gamma}.
    \end{aligned}
  \]

  This concludes the claim that $G^j \setminus (\bigcup_{\ell=j_0}^{j+1} E^{\ell} ) \subset G^{j+1}.$
  It follows that (using that $G^{j_0}$ contains $(E^{j_0})^c$)
  \begin{equation}
    \Pr\Big( G^{j_1} \setminus \big(\bigcup_{\ell=j_0}^{j_1} E^{\ell} \big) \Big)
    \geq 1- \Pr(E^{j_0}) - \sum_{j=j_0}^{j_1-1} \sum_{q=1}^{2^{2^{j+1}}} 
    \Pr( E_q^{j+1} \cap G^{j}).
    \label{eq:Gj}
  \end{equation}

  \paragraph{Bounding the probabilities.}

  For the initial event $E^{j_0}$ as all of the $\theta_{j_0,q}$ are equal and $u_{j_0,q}=0$ for all $q$, this event is
  \[
    E^{j_0} = \{ \exists~ \theta \in [0,2\pi] ~:~ |F_{0,m^{j_0}}(\theta)| +\Delta_{0,m^{j_0}}(\theta,4)  \geq  2( \frks_{m^{j_0}})^{1-\gamma}
  \}.
  \]
  By running the homing process to $u=0$ with initial angle $0$, Proposition \ref{prop:homing}, over blocks $0$ to $m^{j_0}$,
  \[
    \Pr(E^{j_0})
    \leq \exp(-c \min\{ L_{0,m^{j_0}}^2/V_{0,m^{j_0}}, (m^{j_0})^{\gamma/4}\}) \leq \epsilon/3. 
  \]

  As for $j \geq j_0$, to control the probability of $E_q^{j+1} \cap G^{j}$, we run the homing process to $u = w_{j+1,q}$ (which is adapted to $\filt_{m^j}$), started from $\theta_{j,q'}$ over the blocks $m^j$ to $m^{j+1}$.
  To apply Proposition \ref{prop:homing}, we require $|u| \leq \tfrac 12 L_{m^j,m^{j+1}}.$
  On the event $G^j$, we have
  \[
    \begin{aligned}
    | w_{j+1,q}|
    &=
    |(u_{j+1,q} - u_{j,q'}) + (F_{0,m^{j}}(\theta_{j,q'}) - u_{j,q'})| \\
    &\leq  |u_{j+1,q} - u_{j,q'}| + 12 (\frks_{m^j})^{1-\gamma} \\
    &\leq  |b_{j+1} x_{j+1,q} + z_{j+1} - (b_j x_{j,q'} + z_j)| + 12 (\frks_{m^j})^{1-\gamma} \\
    &\leq (|z_{j+1}-z_j| + | b_{j+1}-b_j|) + \epsilon_j + 12 (\frks_{m^j})^{1-\gamma} \\
    &\leq \tfrac16L_{m^j,m^{j+1}} + \tfrac16L_{m^j,m^{j+1}} + \tfrac16L_{m^j,m^{j+1}}.
  \end{aligned}
  \]
  Thus Proposition \ref{prop:homing} applies and
  \[
    \begin{aligned}
    &\sum_{j=j_0}^{j_1-1} \sum_{q=1}^{2^{2^{j+1}}} 
    \Pr( E_q^{j+1} \cap G^{j})\\
    &\leq
    \sum_{j=j_0}^\infty
    2^{2^{j+1}}
    \exp(-c \min\{ L_{m^j,m^{j+1}}^2/V_{m^{j},m^{j+1}}, (m^{j})^{\gamma/4}\})
    \leq \epsilon/3.
    \end{aligned}
  \]
\end{proof}

\begin{proof}[Proof of Theorem~\ref{thm:coverage} in the case $\beta=1$]
  It suffices to show that for any $u \in \C$ and any $\delta > 0$
  \[
    \Pr( F(\D) \supset \D(u,1)) \geq 1 - \delta.
  \]
  We begin by applying Proposition \ref{prop:homingnet} with $\epsilon = \delta/2$, and $u, j$ to be determined, which gives the existence of a net $\{u_q\}$ of $\D(u,1)$ and special angles $\{\Theta_q\}$.  From this net, we pick a $(m^j)^{-4}$--subnet $\{u_q\}$ of cardinality $C(m^j)^8$ (which is possible for all $j$ sufficiently large) which satisfies
  \begin{equation}\label{eq:preEh}
    \sup_{ (\theta,r) \in \mathcal{W}_{m^j}(\Theta_q,1)} |F_{0,m^j}(\theta, r) - u_q |
    \leq 12 (\frks_{m^j})^{1-\gamma}.
  \end{equation}

  Now we apply Proposition \ref{prop:cover} for each $q$ one at a time, with parameters ($k_h = m^j$, $u=u_q$, $\epsilon = \tfrac 12$ and $R=10$) and working on the good event of Proposition \ref{prop:homingnet}. These apply since $12( \frks_{m^{j+1}})^{1-\gamma} \leq (m^j)^{-3/4}$ for all $j$ sufficiently large (as $\gamma \in (0,\tfrac14)$).  We note that the event \eqref{eq:Eh} holds because of \eqref{eq:preEh}.  The conclusion is that provided $m^j$ is sufficiently large,
  \[
    \Pr(  F(\D) \supset \D(u_q,(m^j)^{-7/2}) \mid \filt_{m^j}) \geq 1-(m^j)^{-10}.
  \]
  By a union bound, we conclude for all $j$ sufficiently large
  \[
    \Pr(  F(\D) \supset \D(u,1) \mid \filt_{m^j}) \geq 1-(m^j)^{-2} \geq 1-\delta/2,
  \]
  which completes the proof.

\end{proof}

\appendix

\section{Dense image of random walks}\label{app:rw}

\begin{lemma}\label{lem:bmcoupling}
  Suppose that the sequence $\left\{ (r_k,\omega_k) \right\}$ are independent random variables with $r_k > 0$ a.s.,\ $\omega_k$ uniform on $\T,$ and $r_k$ independent of $\omega_k$ for each $k.$  Suppose further that $k^{\epsilon}r_k \Asto 0$ for some $\epsilon > 0$ and $\sum r_k^2 = \infty$ almost surely.  
  Then the set $\{S_k : k\}$ is almost surely dense in $\C$ where
  \[
    S_k \coloneqq \sum_{j=1}^k r_j \omega_j.
  \]
\end{lemma}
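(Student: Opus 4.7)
My plan is to couple $\{S_k\}$ with a planar Brownian motion via a Skorokhod-type embedding at exit times from circles, and then exploit two classical facts: the neighborhood recurrence of planar Brownian motion and the trivial observation that during the $k$-th block the Brownian trajectory stays within $r_k$ of its starting point.

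First I would condition on the $\sigma$-algebra generated by the radii $\{r_k\}_{k \geq 1}$. Let $B$ be a standard complex Brownian motion with $B_0 = 0$ defined on an independent probability space. Inductively set $T_0 = 0$ and, for $k \geq 1$,
\[
  T_k \coloneqq \inf\bigl\{ t \geq T_{k-1} : |B_t - B_{T_{k-1}}| = r_k \bigr\}.
\]
By rotational invariance of $B$ together with the strong Markov property, each increment $B_{T_k} - B_{T_{k-1}}$ is (conditionally on $\{r_k\}$) uniform on the circle of radius $r_k$ around the origin, and the increments are independent across $k$. Hence $(B_{T_k})_k$ and $(S_k)_k$ have the same conditional joint law, so we may identify $S_k = B_{T_k}$.

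Next, since $\Exp[T_k - T_{k-1} \mid B_{T_{k-1}}] = r_k^2/2$ and $\sum_k r_k^2 = \infty$ almost surely, Kolmogorov's three-series theorem applied (conditionally on $\{r_k\}$) to the independent nonnegative random variables $T_k - T_{k-1}$ forces $T_k \to \infty$ almost surely. Crucially, by the very definition of $T_k$, we have $|B_t - S_{k-1}| \leq r_k$ for every $t \in [T_{k-1}, T_k]$; together with $r_k \to 0$, this yields, for any $\delta > 0$, a (random) index $K_0$ beyond which $\inf_k |B_t - S_k| < \delta/2$ for all $t \geq T_{K_0}$.

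Finally, fix $z \in \C$ and $\delta > 0$. By the classical neighborhood recurrence of planar Brownian motion, $\{B_t : t \geq 0\}$ enters $\D(z, \delta/2)$ at some finite random time $t^*$; since $T_k \to \infty$, we may take $t^* \in [T_{K-1}, T_K]$ for some $K \geq K_0$. The oscillation bound then gives $|S_{K-1} - z| \leq |S_{K-1} - B_{t^*}| + |B_{t^*} - z| < \delta$. Applying this to a countable dense family of pairs $(z, \delta)$ and using Fubini to remove the conditioning on $\{r_k\}$ proves almost sure density of $\{S_k\}$. The only real step to verify carefully is the validity of the embedding under the conditioning on $\{r_k\}$, which is routine; the rest is just classical Brownian geometry, and the hypothesis $k^{\epsilon} r_k \to 0$ is not actually needed beyond $r_k \to 0$.
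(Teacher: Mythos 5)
Your construction is the same as the paper's: embed $S_k$ as $B_{T_k}$ via exit times of disks of radii $r_k$ around the current position (conditionally on $\{r_k\}$), show the total elapsed time diverges, and combine neighborhood recurrence of planar Brownian motion with the oscillation bound $|B_t - S_{k-1}| \le r_k$ on $[T_{k-1},T_k]$. The identification of the conditional law of $(B_{T_k})_k$ with that of $(S_k)_k$ and the final recurrence argument are fine.

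The gap is the step you wave through, which is exactly where the paper spends its care: the proof that $T_k \to \infty$. As written, your inference is invalid: for independent nonnegative random variables, divergence of the conditional means $\sum_k \Exp[\,T_k - T_{k-1} \mid \{r_j\}\,] \asymp \sum_k r_k^2 = \infty$ does \emph{not} imply $\sum_k (T_k-T_{k-1})=\infty$ a.s. (take $X_k = k^2$ with probability $k^{-2}$ and $0$ otherwise: the means diverge but $\sum X_k<\infty$ a.s.). The three-series theorem requires controlling truncated quantities, and that is where the structure of the embedding must be used: conditionally on $\{r_k\}$, $T_k - T_{k-1}$ has the law of $r_k^2\sigma_k$ with $\sigma_k$ i.i.d.\ exit times of the unit disk, which have exponential tails. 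One then checks, e.g., $\Exp[(r_k^2\sigma_k)\wedge 1] \geq c\,r_k^2$ once $r_k\le 1$, so $\sum_k \Exp[(T_k-T_{k-1})\wedge 1]=\infty$, and the divergence criterion for sums of independent nonnegative variables (the Durrett/Kahane result the paper cites) applies; the paper instead verifies $\sum_k \Pr(T_k-T_{k-1}\ge 1\mid\{r_j\})<\infty$ using the hypothesis $k^{\epsilon}r_k\to 0$. With such a computation supplied your proof closes, and your side remark then becomes correct: $r_k\to 0$ together with $\sum r_k^2=\infty$ does suffice, the polynomial rate only making the tail series trivially summable. But note that the step you single out as "the only real step to verify carefully" (the conditional embedding) is the routine one, while the divergence of $\sum(T_k-T_{k-1})$ is the step that actually needs an argument.
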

\begin{proof}
  
  We give a proof by comparison with complex Brownian motion $B_t$ (normalized such that $\Exp[|B_t|^2]=t$).
  We enlarge the probability space to include a standard complex
  $(B_t : t \geq 0)$ adapted to a filtration $(\filt_t : t\geq 0),$ and we can define stopping times $\left\{ \tau_k : k \geq 0 \right\}$ with $\tau_0 = 0$ so that
  \begin{enumerate}[(i)]
    \item
      For all $k \in \N,$
      \[
	|B_{\tau_{k}} - B_{\tau_{k-1}}| = r_k.
      \]
    \item $\tau_k \uparrow \infty$ almost surely.
    \item %$\tau_{k} - \tau_{k-1} \Asto[k] 0$ and 
      $\max_{t \in [\tau_{k-1},\tau_k]} |B_t - B_{\tau_{k-1}}| \Asto[k] 0.$
    \item The sequence $\left\{ B_{\tau_k} = S_k : k \in \N \right\}$ is dense in $\C$.
  \end{enumerate}
  Define the stopping times by
  \[
    \tau_{k} = \inf \left\{ t > \tau_{k-1} : |B_t - B_{\tau_{k-1}}| = r_{k} \right\}.
  \]
  By rotational invariance of the Brownian motion, we further have that $r_k^{-1} (B_{\tau_{k}} - B_{\tau_{k-1}})$ are independent and uniformly distributed on $\T$.  %Let $(\GFilt_k : k \in \N).$ 

  As $|B_t - B_{\tau_k}|^2 - t$ is a martingale, optional stopping gives that $\Exp[\tau_{k} - \tau_{k-1} \vert r_k] = r_k^2.$  It follows that conditionally on $\Gfilt = \sigma( (r_k) : k \in \N),$
  \[
    \sum_{k=1}^\infty \Exp[ \tau_{k} - \tau_{k-1} \vert \Gfilt] = \infty.
  \]
  The variables $r_k^{-1}(\tau_{k} - \tau_{k-1})$ are i.i.d.\ and have exponential tails.  Hence as $k^\epsilon r_k \to 0,$  it follows that
  \[
    \sum_{k=1}^\infty \Pr(\tau_{k} - \tau_{k-1} \geq 1  \vert \Gfilt) < \infty
    \quad\text{and}
    \quad
    \sum_{k=1}^\infty \Exp[ (\tau_{k} - \tau_{k-1}) \one_{(\tau_{k} - \tau_{k-1}) \leq 1} \vert \Gfilt] = \infty
  \]
  almost surely. It follows that (c.f.\ \cite[Exercise 2.5.5]{Durrett} or \cite[Chap. 3, Thm.~6]{Kahane})
  \[
    \sum_{k=1}^\infty (\tau_{k} - \tau_{k-1}) = \infty \quad \As
  \]
  for $\Pr$-almost-every realization of $\{r_k\}$. 

  Hence, from the neighborhood recurrence of $(B_t : t \geq 0),$ and since  $\max_{t \in [\tau_{k-1},\tau_k]} |B_t - B_{\tau_{k-1}}| \Asto[k] 0,$ it follows that the set $\left\{ B_{\tau_k} : k \in \N \right\}$ is dense. By equality is distribution the set $(S_k)_k$ is also dense in $\C$.
\end{proof}

\printbibliography[heading=bibliography]%,title={References}]

\end{document}